\newtheorem{proposition}{Proposition}
\newtheorem{corollary}{Corollary}
\theoremstyle{definition}
\theoremstyle{remark}
\newtheorem{remark}{Remark}
\newtheorem{example}{Example}
\newcommand{\R}{\ensuremath{\mathbb{R}}}
\newcommand{\Z}{\ensuremath{\mathbb{Z}}}
\newcommand{\B}{\ensuremath{\mathbb{B}}}
\newcommand{\G}{\ensuremath{\mathbb{G}}}
\newcommand{\D}{\ensuremath{\mathbb{D}}}
\newcommand{\be}[1]{\begin{equation}\label{#1}}
\newcommand{\ee}{\end{equation}}
\DeclareMathOperator*{\argmin}{arg\,min}
\newcommand{\conv}{\ensuremath{\text{conv}}}
\newcommand{\diag}{\ensuremath{\text{diag}}}
\newcommand{\rev}[1]{{#1}}
\newcommand{\rep}[2]{{#1}{}}
\newcommand{\del}[1]{}
\newcommand{\ins}[1]{{#1}}
\newcommand{\leqnomode}{\tagsleft@true}
\newcommand{\reqnomode}{\tagsleft@false}
\def\SingleSpacedXI{\linespread{1.05}}
\title{ Submodularity in conic quadratic mixed 0-1 optimization}
\author{Alper Atamt\"urk and Andr\'{e}s G\'{o}mez}
\thanks{ \noindent \hskip -5mm
A. Atamt\"urk: Department of Industrial Engineering \& Operations Research, University of California, Berkeley, CA 94720.
\texttt{atamturk@berkeley.edu}   \\
A. Gomez: Department of Industrial Engineering, University of Pittsburgh, Pittsburgh, PA 15261. \texttt{agomez@pitt.edu}
}
\begin{document}

\maketitle

\begin{abstract}
We describe strong convex valid inequalities for conic quadratic mixed 0-1 optimization. 
\ins{These inequalities can be utilized for solving numerous practical nonlinear discrete optimization problems from value-at-risk minimization to queueing system design, from robust interdiction to assortment optimization through appropriate conic quadratic mixed 0-1 relaxations.}
The inequalities exploit the submodularity of the binary restrictions and are based on the polymatroid inequalities over binaries for the diagonal case. We prove that the convex inequalities completely describe the convex hull of a single conic quadratic constraint as well as the rotated cone constraint over binary variables and unbounded continuous variables. We then generalize and strengthen the inequalities by incorporating additional constraints of the optimization problem. Computational experiments on mean-risk optimization with correlations, assortment optimization, and robust conic quadratic optimization
 indicate that the new inequalities strengthen the convex relaxations substantially and lead to significant performance improvements. \\

\noindent
\textbf{Keywords:} Polymatroid, submodularity, second-order cone, nonlinear cuts, robust optimization, assortment optimization, value-at-risk, \rev{interdiction,} Sharpe ratio.

\end{abstract}

\begin{center}
October 2016; February 2018; August 2018 \\ \vskip 1cm 
\end{center}

\BCOLReport{16.02}

\pagebreak

\section{Introduction}
\label{sec:intro}

Submodular set functions play an important role in many fields and have received substantial interest in the literature as they can be minimized in polynomial time \citep{GLS:ellipsoid,S:combinatorial,O:faster}. Combinatorial optimization problems such as the min-cut problem, entropy minimization, matroids, binary quadratic function minimization with a non-positive matrix are special cases of submodular minimization \citep{F:submodularBook}.
The utilization of submodularity, however, has been mainly restricted to 0-1 optimization problems although many practical problems involve continuous variables as well. 

The goal in this paper is to exploit submodularity to derive valid inequalities for mixed 0-1 minimization problems with a conic quadratic objective:
\begin{align} \label{eq:general-obj}
\min  a'x + \Omega \sqrt{x'Qx}:  x \in X \subseteq \{0,1\}^n\times \R_+^m,
\end{align}
or a conic quadratic constraint:
\begin{align} \label{eq:general-const}
 a'x + \Omega \sqrt{x'Qx} \le \rev{r},  \  x \in X \subseteq \{0,1\}^n\times \R_+^m,
\end{align}
where $\Omega \in \R_{+}$\rev{, $r\in \R$} and $Q$ is a symmetric positive semidefinite matrix. Formulations~\eqref{eq:general-obj} and \eqref{eq:general-const} are frequently used to model mean-risk problems. In particular, \eqref{eq:general-obj}
is value-at-risk minimization and \eqref{eq:general-const} is a probabilistic constraint for a random
variable $\rev{\tilde p}' x$, with $\rev{\tilde p} \sim N(a, Q)$. They are also used to model conservative robust formulations with an appropriate value of $\Omega$ if $\rev{\tilde p}$ is not normally distributed \citep{RO-book}.

Introducing an auxiliary variable $z$ to represent
the square root term $\sqrt{x'Qx}$ in \eqref{eq:general-obj}--
\eqref{eq:general-const}, we write 
\begin{align*} 
f(x) = \sqrt{x'Qx} \le z,  \  x \in X \subseteq \{0,1\}^n\times \R_+^m.
\end{align*}
The motivation for this study stems from the fact that $f$ is submodular for the simplest nontrivial non-convex case: when $Q$ is diagonal and $m=0$ \citep{shen2003}.
Therefore, one may expect submodularity to play a significant role in analyzing and solving optimization problems with a general conic quadratic objective or constraint
as submodularity is contained in a basic form. 

Toward this goal we consider the conic quadratic mixed-binary set 
\begin{equation*}
\label{eq:defH}
H_X=\left\{(x,y)\in X, z\in \R_+: \sigma+\sum_{i=1}^n c_i x_i +\sum_{i=1}^m d_iy_i^2 \leq z^2 \right\},
\end{equation*}
where $X\subseteq \rev{\D} = \{0,1\}^n\times \R_+^m$, $c\in \R_+^n$, $d\in \R_+^m$ and $\sigma\geq 0$ and derive strong
inequalities for it. Note that
$H_\rev{\D}$ is the mixed-integer epigraph of the function
\[
f(x,y) =  \sqrt{\sigma+\sum_{i=1}^n c_i x_i +\sum_{i=1}^m d_iy_i^2}.
\]

The set $H_X$ arises frequently in mixed-integer optimization models, well beyond the natural extension to mixed 0-1 mean-risk minimization or chance constrained optimization with uncorrelated random variables. In particular, in Section~\ref{sec:applications} we describe applications on optimization with \emph{correlated} random variables, inventory and scheduling problems, assortment optimization, fractional linear binary optimization, Sharpe ratio maximization, facility location problems, and \rev{conic quadratic interdiction problems}. 

\ignore{
The set $H_X$ frequently arises in models of mixed-integer 
probabilistic optimization problems, scheduling and location problems, 
and ratio minimization problems as discussed in the next section. 
$H_X$ can also be used to derive valid
inequalities for the non-diagonal case through an appropriate relaxation (Section~\ref{sec:applications}). 
}

Let $H_\rev{\B}$ denote the pure binary case of $H_\rev{\D}$ with $m=0$, for which $f$ is submodular. 
While the convex hull of $H_\rev{\B}$, conv($H_\rev{\B}$), is a polyhedral set and well-understood, that is not the case for the mixed-integer set $H_\rev{\D}$. 
Note, however, that for a fixed $y$, $f$ is submodular in $x$. By exploiting this partial 
submodularity for the mixed-integer case, in this paper, we give a complete nonlinear inequality description of conv($H_\rev{\D}$). 
We review the polymatroid inequalities for the pure binary case  in Section~\ref{sec:binary}.

Moreover, we show that the resulting nonlinear inequalities are also strong for the rotated  conic quadratic mixed 0-1 set 
\begin{equation*}
\label{eq:defR}
R_X=\left\{(x,y)\in X, (w,z)\in \R_+^2:\sigma+ \sum_{i=1}^n c_i x_i+\sum_{i=1}^m d_iy_i^2\leq 4wz \right\} \cdot
\end{equation*}
\ignore{
which can be equivalently written as 
\begin{equation*}
\label{eq:defRX}
R_X=\left\{(x,y)\in \! X, (w,z)\in \R_+^2 \!: \sigma \! + \! \sum_{i=1}^n c_i x_i+ \! \sum_{i=1}^m d_iy_i^2+ \! (w \! - \! z)^2 \leq (w \! + \! z)^2 \right\}.
\end{equation*}
}
Observe that even for the binary case ($m=0$), the definition of $R_X$ has the product of two continuous variables $w,z$ on the right-hand-side.
Therefore, the existing polymatroid inequalities from the binary case cannot be directly applied to $R_X$. Several of the applications in Section~\ref{sec:applications} are modeled using the rotated cone set $R_X$.

\subsection*{Literature review}

\ignore{
In particular, the authors
use submodularity of the function
\begin{equation}
\label{eq:definitionF}
f(x)=\sqrt{\sigma+\sum_{i=1}^n c_i x_i}
\end{equation}  
to derive valid \emph{extended polymatroid inequalities} for $H_B$. The inequalities and bound constraints are sufficient to describe the convex hull of $H_B$.
}

A major difficulty in developing strong formulations for mixed-integer nonlinear sets such as $H_X$ is that the corresponding convex hulls are not polyhedral, while most of the theory and methodology developed for mixed-integer optimization focuses on the polyhedral case. Recently, there has been an increasing effort to generalize methods from the linear case to the nonlinear case, including Gomory cuts \citep{Cezik2005}, MIR cuts \citep{AN:conic-mir:ipco}, cut generating functions \citep{Santana2017}, minimal valid inequalities \citep{Kilincc2015}, conic lifting \citep{Atamturk2011}, intersection cuts, disjunctive cuts, and lift-and-project cuts \citep{CS:dis-conv,SM:lift-project}. \cite{Kilinc2010} and \cite{Bonami2011} discuss the separation of split cuts using outer approximations and nonlinear programming. Additionally, some classes of nonlinear sets have been studied in detail: \cite{Belotti2015} study the intersection of a convex set and a linear disjunction, \cite{Modaresi2014} study intersections of a quadratic and a conic quadratic inequalities, \cite{KilincKarzan2015} study disjunctions on the second order cone, \cite{Burer2017} study the intersection of a non-convex quadratic and a conic quadratic inequality, \cite{Dadush2011b} and \cite{Dadush2011c} investigate the the Chv{\'a}tal-Gomory closure of convex sets and \cite{Dadush2011} investigate the split closure of a convex set. These inequalities are general and do not exploit any special structure.

Another stream of research for mixed-integer nonlinear optimization involves generating strong cuts by exploiting structured sets as it is common for the linear integer case. Although the applicability of such cuts is restricted to certain classes of problems, they tend to be far more effective than the general cuts that ignore any problem structure.
\cite{akturk2009strong,aag:scheduling} give second-order representable perspective cuts 
for a nonlinear scheduling problem with variable upper bounds, which are generalized further by
\cite{Gunluk2010} \rev{and \cite{atamturk2018strong}}. \cite{aa:max-submodular} give strong lifted inequalities for maximizing a submodular concave utility function.
\cite{an:submodular-knapsack,ab:submodular-covering} study binary knapsack sets defined by a single second-order conic constraint. \cite{Modaresi2016} derive closed form intersection cuts for a number of structured sets. \cite{AJ:conicvub} give strong valid inequalities for mean-risk minimization with variable upper bounds.

Closely related to this paper,
\cite{Atamturk2008b} study $H_\rev{\B}$ in the context of mean-risk minimization.  \cite{Yu2015} study the generalization with a cardinality constraint, i.e., $H_Y$ where $Y=\left\{x\in \{0,1\}^n:\sum_{i=1}^nx_i\leq k \right\}$. However, more general sets have not been considered in the literature. More importantly perhaps, the valid inequalities derived for the pure-binary case have limited use for mixed-integer problems or even for pure-binary problems with correlated random variables (non-diagonal matrix Q).

\ignore{
In the current paper we derive strong cuts for the set $H_X$. Note that 
$$H_X=\left\{(x,y)\in X, z\in \R_+:\bar{f}(x,y)\leq z \right\},$$ where
$$\bar{f}(x,y)=\sqrt{\sigma+\sum_{i=1}^n c_i x_i+\sum_{i=1}^m d_iy_i^2}.$$
Observe that the non-convex constraint $\bar{f}(x,y)\leq z$ is stronger than the second order conic constraint given in \eqref{eq:defH} since $x_i^2<=x_i$ for $0\leq x_i\leq 1$, and equality only holds at $x_i=0$ and $x_i=1$. Function $\bar{f}$ is convex in $y$ but 
concave in $x$. In particular, for any fixed $y$, $\bar{f}$ is submodular in $x$. We show in this paper how to exploit the partial submodularity of $\bar{f}$ to obtain strong valid inequalities for $H_X$. 
}

\subsection*{Notation}
Let $x$ denote an $n$-dimensional vector of binary variables, $y$ denote an $m$-dimension vector of continuous variables, and $c$ and $d$ be \emph{nonnegative} vectors of dimension $n$ and $m$, respectively. Define $N=\{1,\ldots,n\}$ and $M=\{1,\ldots,m\}$. Let $\text{conv}(X)$ denote the convex hull of $X$. Given a vector $a\in \R^n$ and $S\subseteq\{1,\ldots,n\}$, let $\diag(a)$ denote the $n\times n$ diagonal matrix $A$ with $A_{ii}=a_i$, and let $a(S)=\sum_{i\in S}a_i$. Let $\rev{\B} = \{0,1\}^n$ and $\rev{\G}=\{0,1\}^n\times [0,1]^m$.

\subsection*{Outline}

The rest of the paper is organized as follows.
In Section~\ref{sec:applications} we discuss applications in which sets $H_X$ and $R_X$ arise naturally.
 In Section~\ref{sec:binary}  we review the 
 existing results for $H_\rev{\B}$ and $H_\rev{\G}$.  
 In Section~\ref{sec:unbounded} we show that a nonlinear generalization of the  polymatroid inequalities 
 is sufficient to describe the convex hull of $H_\rev{\D}$.
In Section~\ref{sec:bounded} we study the bounded set $H_\rev{\G}$, give an explicit convex hull description for the case $n=m=1$, and propose strong valid inequalities for the general case. In Section~\ref{sec:constrained} we describe a strengthening procedure for the  nonlinear polymatroid inequalities for any mixed-integer set $X$; the approach generalizes the lifting method of \cite{Yu2015} for the pure-binary cardinality constrained case. \rev{In Section~\ref{sec:compcons} we discuss the implementation of the proposed inequalities using off-the-shelf conic quadratic solvers.} In Section~\ref{sec:computational} we test the effectiveness of the proposed inequalities for a variety of problems discussed in Section~\ref{sec:applications}. Section~\ref{sec:conclusions} concludes the paper. 


\section{Applications}
\label{sec:applications}
In this section, we present \rev{seven} mixed 0-1 optimization problems in which sets $H_X$ and $R_X$ arise naturally.


\subsection{Mean-risk minimization and chance constraints with uncorrelated random variables}
\label{sec:meanrisk}

Conic quadratic constraints are frequently used to model probabilistic optimization with Gaussian distributions \citep[e.g.][]{BL:sp-book}. In particular, if $a_i$, $c_i$ denote the mean and variance of random variables $\tilde{p}_i$, $i\in N$, and $b_i$, $d_i$ the mean and variance of random variables $\tilde{q}_i$, $i\in \rev{M}$, and all variables are independent, then $$\min_{(x,y,z)\in H_X}a'x+b'y+\Phi^{-1}(\alpha) z$$ 
corresponds to the value-at-risk minimization problem over $X$, where $\Phi$ is the c.d.f. of the standard normal distribution and $0.5<\alpha<1$ . Alternatively, the chance constraint $\textbf{Pr}(\tilde{p}'x+\tilde{q}'y\leq \rev{r})\geq \alpha$ is equivalent to $a'x+b'y+\Phi^{-1}(\alpha) z\leq \rev r$, $(x,y,z)\in H_X$. Models with $H_X$ also arise in robust \rev{and distributionally robust} optimization problems with ellipsoidal uncertainty sets \citep{BenTal1998,BenTal1999,ben:robust-book,ghaoui2003worst,zhang2016ambiguous}.

\subsection{Mean-risk minimization and chance constraints with correlated random variables}
\label{sec:correlated} If $\tilde{p}\sim \mathcal{N}(a,Q)$, where $a$ is the mean vector and $Q\succeq 0$ is the covariance matrix, then the value-at-risk minimization or chance constrained optimization with 0--1 variables involve constraints of the form $\sqrt{x'Qx}\leq z$.

A standard technique in quadratic optimization consists in utilizing the diagonal entries of matrices to construct strong convex relaxations \citep[e.g.][]{poljak1995convex,anstreicher2012convex}. In particular, for $x \in \{0,1\}^n$, we have
\[
x'Qx \le z \iff x'(Q-\diag(c))x + c'x \le z
\]
with $c \in \R_+^n$ such that $Q-\diag(c) \succeq 0$.
This transformation is based on the ideal (convex hull) representation of the
separable quadratic term $x'\diag(c)x$ as $c'x$ for $x \in \{0,1\}^n$. Using a similar idea and introducing a continuous variable $y\in \R_+$, we get
\begin{equation*}
\label{eq:socpConstraint}
\sqrt{x'Qx}\leq z\Leftrightarrow(x,y,z)\in H_X \text{ and }\sqrt{x'(Q-\diag(c))x}\leq y.
\end{equation*}
The approach presented here can also be used for mixed-binary sets $X$.

\ignore{
Note that constraints of the form \eqref{eq:socpConstraint} correspond to general second order cone constraints. Thus the inequalities discussed in this paper are broadly applicable to SOCP-representable problems \citep{Alizadeh2003, Lobo1998}. We now discuss in Sections~\ref{sec:lotSizing}-\ref{sec:probabilityMax} additional classes of problems that can be modeled using $R_X$. 
}

\subsection{\rev{\ins{Robust} conic quadratic interdiction}}
\label{sec:robust}
\rev{Given a set of potential adverse event (e.g., natural disasters, disruptions, enemy attacks) \ins{scenarios} $C$, consider the problem of minimizing \rep{the}{a} worst-case cost where only \rep{a}{small} subset of the events can \rep{occur}{happen} simultaneously. If the nominal problem ---when no adverse \rep{event occurs}{happen}--- \rep{is}{can be formulated as} a mixed-integer linear optimization problem, then the worst-case minimization problem \rep{can be formulated as}{is} 
	\leqnomode
	\begin{align}
	\label{eq:linearRobust}\tag{LI}
	\min_{x\in X}\max_{u\in \mathcal{U}}a_0'x+\sum_{j\in C}(a_j'x)u_j,
	\end{align}
where $\mathcal{U}=\left\{u\in \{0,1\}^C: \sum_{j\in E}u_j\leq \Gamma\right\}$ is the uncertainty set, $\Gamma\in \Z_+$ is the maximum number of events \rep{that may occur simultaneously,}{allowed to happen} $a_0$ is the \rep{nominal cost vector}{vector of nominal costs} and $a_j\in \R_+^n$ is the \rep{additional cost vector}{of additional costs} if event $j$ \rep{occurs}{happens}. Problem \eqref{eq:linearRobust} arises naturally in robust optimization \citep{Bertsimas2003,Bertsimas2004}, and it has \del{also} received a vast amount of attention in the context of interdiction \citep[e.g.,][]{wood1993deterministic,cormican1998stochastic,israeli2002shortest,lim2007algorithms}.

We now consider the generalization, where the nominal problem is a mixed-integer conic quadratic optimization problem, e.g., \ins{with a } value-at-risk minimization \ins{objective, considered in \cite{ADJ:mr-interdiction}}. In this case, the worst-case minimization problem is
\leqnomode
\begin{align}
\label{eq:robustConicQuadraticProblem}\tag{CQI}
\quad\quad\omega^* =\min_{x\in X}\max_{u\in \mathcal{U}} a_0'x+\sum_{j\in C}\left(a_j' x\right) u_j+\sqrt{x'Q_0 x+\sum_{j\in C}\left(x'Q_j x\right)u_j},
\end{align}
where $Q_0\succeq 0$ is the nominal covariance matrix and $Q_j\succeq 0$ is the matrix of increased covariances if event $j$ happens.

Problem \eqref{eq:robustConicQuadraticProblem} was \del{first} studied by \cite{Atamturk2017} \rep{for a convex feasible set $X$}{when $X$ is a convex set}. \rep{They}{The authors} show that solving the inner maximization problem is \NP-hard for a fixed value of $x$ \rep{and that}{Nevertheless,} feasible solutions with objective values within 25\% of \ins{the} optimal can be obtained by solving the optimization \ins{problem}
\begin{align}
\omega_a=\min \;& \frac{1}{4}w+a_0'x+z_0+\Gamma\gamma \notag\\
\text{s.t.}\;& \gamma\geq  a_j' x+z_j & \forall j\in C \notag\notag\\
\label{eq:approxInterdiction} \tag{IA} 
&x'Q_jx\leq z_jw & \forall j\in \{0\} \cup C \notag \\
& x\in X,\; z\in \R_+^{|C|+1},\; w\in \R_+,\; \gamma\in \R_+.\notag
\end{align}
Formulations for the generalization where $\mathcal{U}$ is set of extreme points of an integral polytope are also proposed, but are omitted here for brevity. 

If the set $X$ is \rep{conic quadratic}{SOCP}-representable, then \del{problem} \eqref{eq:approxInterdiction} can be tackled with off-the-shelf \rep{mixed-integer conic quadratic}{SOCP} solvers. Moreover, if all $x$ variables are continuous, then \eqref{eq:approxInterdiction} is convex optimization problem, thus polynomial-time solvable. In contrast, if some variables are discrete, then \eqref{eq:approxInterdiction} is much more challenging, especially due to the rotated cone constraints $x'Q_jx\leq z_jw$. \del{ whose structure is not well understood to date. Nevertheless,} Observe that, \ins{in this case}, we can \del{use a similar ideas as the one presented in Section~\ref{sec:correlated},} introduce an additional variable $y\in \R_+$ and \rep{and then utilize}{use} the decomposition 
$$x'Q_jx\leq z_jw\Leftrightarrow (x,y,w,z_j)\in R_X\text{ and }x'(Q-\diag(c))x\leq y$$
}
\ins{to derive stronger formulations.}

\subsection{Lot-sizing and scheduling problems}
\label{sec:lotSizing}
Inventory problems with economic order quantity involve expressions of the form $k\frac{\rev p}{\rev q}$, where $\rev {p\in \R_+}$ is the demand, $\rev {q\in \R_+}$ is the lot size, and $k\rev{\in \R_+}$ is a fixed cost for ordering inventory. In simple settings, the optimal lot size $\rev q^*$ can be expressed explicitly \citep{Nahmias2001}, but in more complex settings, where the demand is a linear function of discrete variables, e.g., in joint location-inventory problems \citep{Ozsen2008,Atamturk2012} this is not possible. In such cases, the order costs involve expressions of the form
\begin{equation}
\label{eq:fractional1}
\frac{c'x}{\rev q}\leq z \Leftrightarrow (x,\rev q,z)\in R_{\rev \B}.
\end{equation}
The ratio \eqref{eq:fractional1} also arises in scheduling, specifically in the \emph{economic lot scheduling problem} \citep{Bollapragada1999,Bulut2014,Pesenti2003,Sahinidis1991}. In this context, $c$ is the vector to setup costs/times and $\rev q$ denotes a production cycle length, thus $z$ in \eqref{eq:fractional1} corresponds to setup costs/times per unit time. Expression \eqref{eq:fractional1} also arises in the plant design and scheduling problems to model the profitability or productivity of the plant \citep{Castro2005,Castro2009}.

\rev{
\subsection{Queueing system \ins{design}}
\label{sec:queueing} The service system design problem, also referred to as the facility location problem with stochastic demand and congestion \citep{amiri1997solution,berman200111,elhedhli2005exact,elhedhli2006service}, \rep{aims to locate}{is concerned with locating} a set of service facilities \ins{while} balancing operational costs and service quality. \del{costs.} If a \del{given} facility services \rep{too many}{a substantial amount of} customers, it may become overly congested, resulting in long waiting times for the customers and poor \ins{service} quality overall. 
Specifically, congestion is often modeled using queueing theory. 
Given an M/M/1 queue with mean demand $\lambda$ and mean service rate $\mu>\lambda$, the average time in the system is $\frac{1}{\mu-\lambda}$. Additionally, in the service system design problem, the demand at location $i$ is of the form $\lambda_i=c_i'x$, where $x$ are binary decision variables modeling \ins{the} assignments of customers to facilities; moreover, the service rates are of the form $\mu_i=d_i'y$, where $y$ are variables representing the \del{ type of}servers installed at location $j$. Thus the service system design problem is of the form
\leqnomode
\begin{align*}\label{eq:ssdp}\tag{SSDP}\min_{(x,\ins{y},t)\in X}a'x+b'y+\Omega\sum_{i}\frac{c_i'x}{a_i'y-c_i'x},\end{align*}
where $\Omega>0$ is the weight given to the service quality, and each term $\frac{c_i'x}{a_i'y-c_i'x}$ is the total time of servicing \rep{the}{all} customers at location $j$. Observe that 
$$\frac{c_i'x}{a_i'y-c_i'x}\leq z \Leftrightarrow (x,\mu-\lambda,z)\in R_{\B},$$
thus strong formulations for $R_{\B}$ can be directly used in the context of \eqref{eq:ssdp}. 
}

\subsection{Binary linear fractional problems }
\label{sec:fractional}

Generalizing the models in Section\rev{s}~\ref{sec:lotSizing} \rev{and \ref{sec:queueing}}, binary linear fractional problems are optimization problems with constraints of the form
\begin{align*}
\label{eq:fractional01}
 \frac{ c_{0}+\sum_{i=1}^nc_{i}x_i}{a_{0}+\sum_{i=1}^n a_{i}x_i}\leq z &\Leftrightarrow c_0+\sum_{i=1}^n c_ix_i^2\leq zw,\ w=a_{0}+\sum_{i=1}^n a_{i}x_i\\
 &\Leftrightarrow (x,w,z)\in R_B\text{, with }w=a_{0}+\sum_{i=1}^n a_{i}x_i
\end{align*}
where $a_{i}, c_{i}\geq 0$ for $i=0,\ldots,n$. Note that a lower bound on the ratio can also be expressed similarly
by complementing variables.
\ignore{
 by noting that
$$z \leq  \frac{ c_{0}+\sum_{i=1}^nc_{i}x_i}{a_{0}+\sum_{i=1}^n a_{i}x_i} \Leftrightarrow z \leq L-w\text{ with } \frac{ (La_0-c_{0})+\sum_{i=1}^n(La_i-c_{i})x_i}{a_{0}+\sum_{i=1}^n a_{i}x_i}\leq w,$$
for any constant $L$; thus by selecting $L$ sufficiently large and complementing variables with $b_i=0$, we find an equivalent formulation with upper bounds on the ratios.
}
Binary fractional optimization arises in numerous applications including assortment optimization with mixtures of multinomial logits \citep{desir2014near,Mendez2014,Sen2015}, WLAN design \citep{Amaldi2011}, facility location problems with market share considerations \citep{Tawarmalani2002}, and cutting stock problems \citep{Gilmore1963}, among others; see also the survey \cite{Borrero2016} and the references therein.

Applications of binary linear fractional optimization are abundant in network problems. For example, given a graph $G=(V,E)$, problems of the form 
\begin{equation}
\label{eq:cutRatio}
\min\! \left\{\! \frac{\sum_{(i,j)\in E}c_{ij}x_{ij}}{\sum_{i\in V}a_i\rev{x}_i}: x_{ij} \!
\geq |\rev{x}_i \!- \!\rev{x}_j|, (i,j) \! \in \! E, \rev{x}\in \rev{X} \!\subseteq\{0,1\}^{\rev {V+E}}\right\}
\end{equation}
arise in the study of expander graphs \citep{Davidoff2003}; in particular, the optimal value of \eqref{eq:cutRatio} with $c=1$, $a=1$ and $\rev{X}=\left\{\rev{x}\in \{0,1\}^{V+\rev{E}}: 1\leq \sum_{i\in V}\rev{x}_i\leq 0.5|V|\right\}$ corresponds to the Cheeger constant of the graph.  
See \cite{Hochbaum2010,Hochbaum2013} for other fractional cut problems arising in image segmentation, and see \cite{Prokopyev2009} for a discussion of other ratio problems in graphs arising in facility location.

\subsection{Sharpe ratio maximization}
\label{sec:probabilityMax} Let $a_i,c_i$ be the mean and variance of normally distributed independent random variables $\tilde{p}_i$, $i\in N$ as in Section~\ref{sec:meanrisk}. A natural alternative to mean-risk minimization for a risk-adverse decision maker is, given a budget $\rev{r}$, to maximize the probability of meeting the budget; that is,
\begin{equation}
\label{eq:maxProba}
\max_{x\in X}\textbf{Pr}\left(\tilde{p}'x\leq \rev{r}\right).
\end{equation}
Problems of the form \eqref{eq:maxProba} are considered in \cite{Nikolova2006} in the context of the stochastic shortest path problem. 

Assuming there is a solution $x \in X$ satisfying $a'x\leq \rev{r}$, note that  
$$\textbf{Pr}\left(\tilde{p}'x\leq \rev{r}\right)=\textbf{Pr}\left(\frac{\tilde{p}'x-a'x}{\sqrt{c'x}}\leq \frac{\rev{r}-a'x}{\sqrt{c'x}}\right)=\Phi\left( \frac{\rev{r}-a'x}{\sqrt{c'x}}\right) \cdot$$
Since $\Phi$ is monotone non-decreasing and $\rev{r}-a'x\geq 0$ for any optimal solution, we see that \eqref{eq:maxProba} is equivalent to maximizing $\frac{\rev{r}-a'x}{\sqrt{c'x}}$. Observe that the resulting objective corresponds to maximizing the reward-to-volatility or Sharpe ratio \citep{Sharpe1994}, a commonly used risk-adjusted performance measure in finance. 
Maximizing the Sharpe ratio is equivalent to minimizing $\frac{\sqrt{c'x}}{\rev{r}-a'x}$. 
Therefore, we can restate \eqref{eq:maxProba} as 
\begin{align}
\min\;& z\notag\\
\text{s.t.}\;& w=\rev{r}-a'x\notag\\
&{\sqrt{c'x}}\leq wz\label{eq:pOrderConstraint}\\
& x\in X,\; w,z\geq 0.\label{eq:pOrderNonnegativity}
\end{align}
Constraint \eqref{eq:pOrderConstraint} is not conic quadratic. Note, however, for $w,z\geq 0$ we have $${\sqrt{c'x}}\leq wz \Leftrightarrow \sqrt{4\left(\sqrt[\uproot{3}4]{c'x}\right)^2+(w-z)^2}\leq w+z.$$ Then one gets a convex relaxation by replacing the non-convex term $\sqrt[\uproot{3}4]{c'x}$ by its convex lower bound $\sqrt[\uproot{5}4]{\sum_{i\in N}c_ix_i^4}$. The resulting conic quadratic representable convex inequality can be written as $${\sqrt{\sum_{i\in N}c_ix_i^4}}\leq wz.$$


As we will show in Section~\ref{sec:rotated}, a nonlinear version of the extended polymatroid inequalities corresponding to the submodular function $\bar h(x)=2\sqrt[\uproot{3}4]{c'x}$ is sufficient to describe the convex hull of the set given by \eqref{eq:pOrderConstraint}--\eqref{eq:pOrderNonnegativity} for $X= \rev{\B}$
(Remark~\ref{remark:ratio}).

\section{Preliminaries}
\label{sec:binary}


In this section we review \rev{\rep{earlier}{existing} results} for the binary and mixed 0-1 cases.
Given $\sigma \ge 0$ and $c_i > 0, \ i \in N$,  consider the set
\begin{align}
H_\rev{\B} = \bigg \{ (x,z) \in B \times \R_+:  \sqrt{\sigma + \sum_{i \in N} c_i x_i } \le z  \bigg \}.
\end{align}
Observe that $H_\rev{\B}$ is the binary restriction of $H_\rev{\D}$ obtained by setting $y = 0$ and it is the union of finite number \rev{of} line segments; therefore, its convex hull is polyhedral.
For a given permutation $\left((1),(2),\ldots,(n)\right)$ of $N$, let
\begin{align}
\sigma_{(k)}&= c_{(k)} + \sigma_{(k-1)}, \text{ and } \sigma_{(0)} = \sigma,\notag\\
\pi_{(k)}&=\sqrt{\sigma_{(k)}}-\sqrt{\sigma_{(k-1)}},\label{eq:definitionPi}
\end{align}
and define the \emph{polymatroid inequality} as
\begin{equation}
\label{eq:extendedPolymatroidInequality}
\sum_{i=1}^n \pi_{(i)}x_{(i)}\leq z - \sqrt{\sigma}.
\end{equation}
Let $\Pi_\sigma$ be the set of such coefficient vectors $\pi$ for \textit{all} permutations of $N$.
The set function defining $H_B$ is non-decreasing submodular; therefore, $\Pi_\sigma$ \rev{is the set of} extreme points of \rev{the extended} polymatroid \citep{Edmonds1970} \rev{associated with the submodular function $f(x)=\sqrt{\sigma+\sum_{i\in N}c_ix_i}$}. \rev{Then it follows from \citet{L:submodular-convex} that}
the convex hull of $H_\rev{\B}$ is given by
the set of all polymatroid inequalities and the bounds of the variables\rev{:}

\begin{proposition}
	[Convex hull of $H_B$]
	\label{prop:convexHullHB}
	$$\text{conv}(H_B)=\left\{(x,z)\in [0,1]^N\times \R_+:\pi'x \leq z - \sqrt{\sigma}, \;\; \forall \pi\in \Pi_\sigma \right\}.$$
\end{proposition}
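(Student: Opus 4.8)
The plan is to prove the two inclusions $\conv(H_B)\subseteq P$ and $P\subseteq\conv(H_B)$, where $P$ denotes the polyhedron on the right-hand side of the claimed identity. Throughout I identify a point $x\in\{0,1\}^n$ with its support $S=\{i\in N:x_i=1\}$ and write $f(S)=\sqrt{\sigma+c(S)}$ and $g(S)=f(S)-\sqrt\sigma$, so that $g$ is normalized ($g(\emptyset)=0$) and, since $c\ge 0$, non-decreasing; it is submodular by \citep{shen2003}. For a permutation $((1),\dots,(n))$ of $N$, the vector $\pi$ of \eqref{eq:definitionPi} is exactly the marginal (greedy) vector of $g$: since $\sigma_{(k)}=\sigma+c(S_k)$ with $S_k=\{(1),\dots,(k)\}$, we have $\pi_{(k)}=\sqrt{\sigma_{(k)}}-\sqrt{\sigma_{(k-1)}}=g(S_k)-g(S_{k-1})$.

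For validity ($\conv(H_B)\subseteq P$), I first note that, because $g$ is non-decreasing and submodular, every $\pi\in\Pi_\sigma$ lies in the polymatroid $\{\pi\ge 0:\pi(S)\le g(S)\ \text{for all }S\subseteq N\}$ \citep{Edmonds1970}. Hence, for any $(\mathbbm{1}_S,z)\in H_B$, $z\ge f(S)=\sqrt\sigma+g(S)\ge\sqrt\sigma+\pi(S)=\sqrt\sigma+\pi'\mathbbm{1}_S$, so the polymatroid inequality \eqref{eq:extendedPolymatroidInequality} is valid for $H_B$; the bound constraints $0\le x\le 1$, $z\ge 0$ are trivially valid. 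Since $P$ is convex and contains $H_B$, it contains $\conv(H_B)$.

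For the reverse inclusion, take $(\bar x,\bar z)\in P$ and relabel the coordinates so that $\bar x_{(1)}\ge\cdots\ge\bar x_{(n)}$; set $\bar x_{(0)}=1$, $\bar x_{(n+1)}=0$, $S_k=\{(1),\dots,(k)\}$, and $\lambda_k=\bar x_{(k)}-\bar x_{(k+1)}\ge 0$ for $k=0,\dots,n$, so that $\sum_{k=0}^n\lambda_k=1$ and $\sum_{k=0}^n\lambda_k\mathbbm{1}_{S_k}=\bar x$. Let $\pi^\star\in\Pi_\sigma$ be the vector of \eqref{eq:definitionPi} for this permutation. A summation by parts, using $g(\emptyset)=0$, gives
\[
\pi^{\star\prime}\bar x=\sum_{k=1}^n\big(g(S_k)-g(S_{k-1})\big)\bar x_{(k)}=\sum_{k=0}^n\lambda_k\,g(S_k).
\]
Since $(\bar x,\bar z)\in P$ and $\pi^\star\in\Pi_\sigma$, we get $\bar z-\sqrt\sigma\ge\pi^{\star\prime}\bar x=\sum_{k=0}^n\lambda_k g(S_k)$, hence $\bar z\ge\sum_{k=0}^n\lambda_k f(S_k)$. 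Now pick any index $j$ with $\lambda_j>0$, set $z_k=f(S_k)$ for $k\ne j$ and $z_j=\lambda_j^{-1}\big(\bar z-\sum_{k\ne j}\lambda_k f(S_k)\big)$; then $z_j\ge f(S_j)$, so $(\mathbbm{1}_{S_k},z_k)\in H_B$ for every $k$ with $\lambda_k>0$, and $(\bar x,\bar z)=\sum_{k=0}^n\lambda_k(\mathbbm{1}_{S_k},z_k)\in\conv(H_B)$.

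The substantive inputs are the submodularity of $g$ and Edmonds' polymatroid description, both available from the cited literature; the only computation requiring care is the summation-by-parts identity for $\pi^{\star\prime}\bar x$. The point to watch is the reverse inclusion: instead of invoking closedness of $\conv(H_B)$ or an abstract convex-envelope / Lov\'asz-extension argument, I reduce it to the single scalar inequality $\bar z\ge\sum_k\lambda_k f(S_k)$ obtained by testing the permutation that sorts $\bar x$, and then construct the convex combination explicitly, which keeps the argument elementary.
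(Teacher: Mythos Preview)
Your proof is correct. Note, however, that the paper does not actually prove Proposition~\ref{prop:convexHullHB}: it is stated in the preliminaries as a known fact, with Edmonds cited for the polymatroid characterization of $\Pi_\sigma$ and Lov\'asz cited for the convex-hull description of the epigraph of a non-decreasing submodular function. Your argument is precisely the elementary unfolding of that classical Lov\'asz-extension argument---sorting $\bar x$, expressing it as a convex combination of chain indicators $\mathbbm{1}_{S_k}$, and using summation by parts to reduce membership in $\conv(H_B)$ to the single polymatroid inequality for the sorting permutation. So your proof is consistent with (and spells out) what the paper invokes by citation, rather than differing from any argument the paper itself supplies.
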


\rev{Proposition~\ref{prop:separation} is a direct consequence of a result by \cite{Edmonds1970}, showing} the maximization of a linear function over a polymatroid can be solved by the greedy algorithm. Therefore, a point 
$(\bar x, \bar z) \in [0,1]^N \times \R_+$ can be separated from $\text{conv}(H_{\rev{\B}})$ via the greedy algorithm by sorting $\bar x_i, \ i \in N$ in non-increasing order in $O(n \log n)$ time.

\begin{proposition}
	[Separation]
	\label{prop:separation}
	A point $(\bar{x},\bar{z})\not \in \text{conv}(H_B)$ such that $\bar{x}_{(1)}\geq \bar{x}_{(2)} \geq \cdots \geq \bar{x}_{(n)}$ is separated from $\conv(H_{\rev\B})$ by inequality \eqref{eq:extendedPolymatroidInequality}.
\end{proposition}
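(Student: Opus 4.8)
The plan is to show that the greedy solution to the linear maximization problem underlying the separation is precisely the permutation that sorts $\bar x$ in non-increasing order, so that the most violated polymatroid inequality — if one is violated at all — is exactly \eqref{eq:extendedPolymatroidInequality} for that ordering. By Proposition~\ref{prop:convexHullHB}, the point $(\bar x,\bar z)$ lies in $\conv(H_B)$ if and only if $\pi'\bar x \le \bar z - \sqrt\sigma$ for every $\pi \in \Pi_\sigma$; equivalently, $(\bar x,\bar z)\notin\conv(H_B)$ if and only if $\max_{\pi\in\Pi_\sigma}\pi'\bar x > \bar z - \sqrt\sigma$. Since $\Pi_\sigma$ is the vertex set of a polymatroid associated with the non-decreasing submodular rank function $g(S)=\sqrt{\sigma+c(S)}-\sqrt\sigma$, Edmonds' theorem says $\max\{\pi'\bar x:\pi\in\Pi_\sigma\} = \max\{\pi'\bar x:\pi\in P(g)\}$ is attained at the greedy vertex obtained by processing the indices in non-increasing order of $\bar x_i$.

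The key steps, in order, are: (i) recall that the greedy algorithm of \cite{Edmonds1970}, applied to the polymatroid $P(g)$ with objective weights $\bar x\ge 0$, sorts the coordinates so that the weights are non-increasing, say $\bar x_{(1)}\ge\cdots\ge\bar x_{(n)}$, and sets $\pi_{(k)} = g(\{(1),\dots,(k)\}) - g(\{(1),\dots,(k-1)\})$; (ii) observe that with $\sigma_{(k)} = \sigma + \sum_{j\le k} c_{(j)}$, this greedy coefficient is exactly $\pi_{(k)} = \sqrt{\sigma_{(k)}} - \sqrt{\sigma_{(k-1)}}$, which is the definition \eqref{eq:definitionPi}; (iii) conclude that the greedy vertex $\pi$ is precisely the coefficient vector of inequality \eqref{eq:extendedPolymatroidInequality} for the sorted permutation, and it maximizes $\pi'\bar x$ over $\Pi_\sigma$; (iv) since $(\bar x,\bar z)\notin\conv(H_B)$ and $\bar x\in[0,1]^N$, the violation cannot come from the bound constraints, so it must come from some polymatroid inequality, hence from the maximizing one, i.e. $\pi'\bar x > \bar z-\sqrt\sigma$. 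This is exactly the statement that \eqref{eq:extendedPolymatroidInequality} separates $(\bar x,\bar z)$.

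One subtlety worth addressing explicitly is the case $\bar x_i < 0$ or $\bar x_i > 1$ for some $i$: the proposition's hypothesis implicitly restricts attention to $\bar x\in[0,1]^N$ (the ambient set in Proposition~\ref{prop:convexHullHB}), so I would state this assumption up front; for $\bar x\ge 0$ the greedy algorithm over the polymatroid coincides with the greedy over its extreme points $\Pi_\sigma$, which is what makes step (iii) go through. The main obstacle is not any hard inequality but rather making the correspondence in step (ii) airtight — namely verifying that the greedy increments of the polymatroid rank function $g$ coincide term-by-term with the $\pi_{(k)}$ of \eqref{eq:definitionPi}, and noting that telescoping $\sum_k \pi_{(k)} = \sqrt{\sigma_{(n)}}-\sqrt\sigma = g(N)$ confirms the identification. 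Everything else is a direct appeal to Edmonds' greedy optimality (already invoked in the text preceding the statement) together with Proposition~\ref{prop:convexHullHB}.
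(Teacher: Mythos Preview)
Your proposal is correct and follows exactly the approach the paper takes: the proposition is stated as a direct consequence of Edmonds' greedy algorithm for linear optimization over a polymatroid, combined with the convex hull description in Proposition~\ref{prop:convexHullHB}, and the paper itself does not provide any further proof beyond this citation. Your elaboration of steps (i)--(iv) simply makes explicit the correspondence between the greedy increments and the coefficients \eqref{eq:definitionPi}, which is precisely the content the paper leaves implicit.
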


\citet{Atamturk2008b} consider the  mixed-integer version of $H_\rev{\B}$:
\begin{equation*}
H_{\rev{\G}}=\left\{(x,y,z)\in C \times \R_+: {\sqrt{\sigma + \sum_{i\in N}c_ix_i+\sum_{i\in M}d_iy_i^2}}\leq z\right\},
\end{equation*}
where $d_i > 0, \ i \in M$, and give valid inequalities for $H_{\rev{\G}}$ based on 
the polymatroid inequalities.
Without loss of generality, the upper bounds of the continuous variables in $H_{\rev{\G}}$ are set to one by scaling.
\begin{proposition}
	[Valid inequalities for $H_{\rev{\G}}$]
	\label{prop:validIneqBounded1}
	For $T\subseteq M$ inequalities
	\begin{equation}
	\label{eq:polymatroidBoundedDominated}
	\pi'x +	\sqrt{\sigma + \sum_{i\in T}d_iy_i^2} \leq z, \quad \pi\in \Pi_{\sigma + d(T)}
	\end{equation}
	are valid for $H_{\rev{\G}}$.
\end{proposition}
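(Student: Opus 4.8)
The plan is to deduce inequality~\eqref{eq:polymatroidBoundedDominated} from the pure-binary polymatroid inequality of Proposition~\ref{prop:convexHullHB}, applied with the \emph{shifted} parameter $\sigma+d(T)$, combined with an elementary concavity estimate for the square root. Fix $(x,y,z)\in H_C$, a subset $T\subseteq M$, and $\pi\in\Pi_{\sigma+d(T)}$, and write $a:=\sum_{i\in N}c_ix_i\ge 0$ and $\delta:=\sum_{i\in T}d_iy_i^2$. Two elementary facts will be needed. First, since each $d_i\ge 0$ and $y_i\in[0,1]$, we have $0\le\delta\le d(T)$. Second, dropping the nonnegative terms $d_iy_i^2$ for $i\in M\setminus T$, membership of $(x,y,z)$ in $H_C$ gives $z\ge\sqrt{\sigma+a+\sum_{i\in M}d_iy_i^2}\ge\sqrt{\sigma+a+\delta}$.

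Since $x\in\{0,1\}^n$, the pair $\big(x,\ \sqrt{\sigma+d(T)+a}\,\big)$ lies in the binary set defined exactly as $H_B$ but with $\sigma$ replaced by $\sigma+d(T)$. Hence, by Proposition~\ref{prop:convexHullHB} --- or, more precisely, by the validity of the polymatroid inequality~\eqref{eq:extendedPolymatroidInequality} for every $\pi\in\Pi_{\sigma+d(T)}$ --- we obtain $\sum_{i=1}^n\pi_{(i)}x_{(i)}\le\sqrt{\sigma+d(T)+a}-\sqrt{\sigma+d(T)}$. Adding $\sqrt{\sigma+\delta}$ to both sides and invoking the lower bound $z\ge\sqrt{\sigma+a+\delta}$ from the previous paragraph, it suffices to prove
\[
\sqrt{\sigma+d(T)+a}-\sqrt{\sigma+d(T)}\ \le\ \sqrt{\sigma+\delta+a}-\sqrt{\sigma+\delta}.
\]

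Finally, this reduces to the observation that $g(t):=\sqrt{t+a}-\sqrt{t}$ is non-increasing on $[0,\infty)$: indeed $g'(t)=\frac{1}{2\sqrt{t+a}}-\frac{1}{2\sqrt{t}}\le 0$ for $t>0$ since $a\ge 0$, and continuity extends the monotonicity to $t=0$; the displayed inequality is then $g(\sigma+d(T))\le g(\sigma+\delta)$, which holds because $\sigma+\delta\le\sigma+d(T)$ by the first elementary fact. I do not anticipate a genuine obstacle here: the proof is short, and the only subtlety worth flagging is that the polymatroid inequality must be instantiated with the data-independent ``worst case'' $\sigma+d(T)$ rather than with the actual quantity $\sigma+\delta$, the concavity estimate being exactly what compensates for this over-shift; the degenerate case $a=0$ (where $g\equiv 0$) is trivial.
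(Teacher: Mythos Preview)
Your proof is correct. The paper does not actually prove this proposition---it is quoted from \cite{Atamturk2008b} and accompanied only by the one-sentence remark that the inequalities ``are obtained by setting the subset $T$ of the continuous variables to their upper bounds and relaxing the rest''---and your argument is precisely the rigorous verification behind that sketch: instantiate the polymatroid inequality at the worst-case shift $\sigma+d(T)$, drop the $M\setminus T$ terms, and use the concavity of $\sqrt{\cdot}$ (your monotonicity of $g(t)=\sqrt{t+a}-\sqrt{t}$) to pass from $\sigma+d(T)$ back to the actual $\sigma+\delta$.
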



Inequalities \eqref{eq:polymatroidBoundedDominated} are obtained by setting the subset $T$ of the continuous variables to their upper bounds and relaxing the rest, and they dominate any inequality of the form $$\xi'x + \sqrt{\sigma+\sum_{i\in T}d_iy_i^2}\leq z$$ with $\xi\in \R^n$. Although inequalities \eqref{eq:polymatroidBoundedDominated} are the strongest possible among inequalities that are linear in $x$ and conic quadratic in $y$, they may be weak or dominated by other classes of nonlinear inequalities.
In this  paper we introduce stronger and more general inequalities than \eqref{eq:polymatroidBoundedDominated} for $H_{\rev{\G}}$.

\ignore{
	**************************
	
	\subsection{Previous work}
	We now review previous results for the set $H_X$.
	
	\subsubsection{Case with $X=\{0,1\}^n$}
	
	In this section we state, without proof, the main results of \cite{Atamturk2008b} for the pure-binary set $H_B$. Note that since $H_B$ is a finite union of line segments, $\text{conv}(H_B)$ is a polyhedron, i.e., it can be described by a finite number of linear inequalities. As noted before, the function $f$ given in \eqref{eq:definitionF} is submodular since it is the composition of a linear function and a concave function. Let the polyhedron
	$$P(f)=\left\{\pi\in \R^n: \sum_{i\in N}\pi_i x_i\leq f(x)-\sqrt{\sigma}, \forall x\in\{0,1\}^n \right\}$$
	be the extended polymatroid associated with $f$, and let $\Pi(f)$ denote the set of extreme points of $P(f)$. Then,
	
	\begin{proposition}[Convex hull of $H_B$]
		\label{prop:convexHullHB}
		\begin{equation}
		\label{eq:extendedPolymatroidInequality}
		\text{conv}(H_B)=\left\{(x,z)\in [0,1]^n\times \R_+:\sqrt{\sigma}+\pi'x \leq z, \;\; \forall \pi\in \Pi(f)\right\}.
		\end{equation}
	\end{proposition}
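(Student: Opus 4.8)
The plan is to recognize the coefficient vectors in $\Pi_\sigma$ as the extreme points of a polymatroid and then establish the two set inclusions separately. First I would define the normalized set function $g(S)=\sqrt{\sigma+c(S)}-\sqrt{\sigma}$ for $S\subseteq N$, which satisfies $g(\emptyset)=0$ and is nondecreasing and submodular, being the composition of the affine map $S\mapsto\sigma+c(S)$ with a concave increasing function. Its polymatroid is $P(g)=\{\pi\in\R_+^n:\pi(S)\le g(S)\ \forall S\subseteq N\}$. By the greedy characterization of \cite{Edmonds1970}, the extreme points of $P(g)$ are exactly the vectors produced by the recursion $\pi_{(k)}=g(\{(1),\dots,(k)\})-g(\{(1),\dots,(k-1)\})$ over all permutations $((1),\dots,(n))$ of $N$; substituting $g(\{(1),\dots,(k)\})=\sqrt{\sigma_{(k)}}-\sqrt{\sigma}$ recovers precisely the definition \eqref{eq:definitionPi}, so $\Pi_\sigma$ is exactly this extreme-point set. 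Writing $P$ for the right-hand side of the claimed identity, I would then show $\conv(H_B)\subseteq P$ and $P\subseteq\conv(H_B)$.

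For the first inclusion it suffices, by convexity of $P$, to check that every $(x,z)\in H_B$ lies in $P$. The bound constraints hold trivially, and for any $\pi\in\Pi_\sigma\subseteq P(g)$ and $S=\{i:x_i=1\}$ I would use $\pi'x=\pi(S)\le g(S)=\sqrt{\sigma+c(S)}-\sqrt{\sigma}\le z-\sqrt{\sigma}$, where the last step is the defining constraint of $H_B$. Hence each polymatroid inequality is valid and $\conv(H_B)\subseteq P$.

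The reverse inclusion is the crux. Given $(\bar x,\bar z)\in P$, relabel so that $\bar x_{(1)}\ge\cdots\ge\bar x_{(n)}$, set $\bar x_{(n+1)}=0$, and let $S_k=\{(1),\dots,(k)\}$ for $k=0,\dots,n$. I would first record the telescoping identity $\bar x=\sum_{k=1}^n(\bar x_{(k)}-\bar x_{(k+1)})\mathbf{1}_{S_k}$, whose coefficients $\lambda_k=\bar x_{(k)}-\bar x_{(k+1)}\ge0$ together with $\lambda_0=1-\bar x_{(1)}$ form a convex combination, since they are nonnegative and sum to $\bar x_{(1)}+\lambda_0=1$. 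Evaluating the same convex combination on the points $(\mathbf{1}_{S_k},\sqrt{\sigma_{(k)}})\in H_B$ yields a point of $\conv(H_B)$ whose $x$-coordinate is $\bar x$ and whose $z$-coordinate is $\sqrt{\sigma}+\sum_{k=1}^n\lambda_k\,g(S_k)$. By Abel summation this last sum equals $\sum_{k=1}^n\bar x_{(k)}\bigl(g(S_k)-g(S_{k-1})\bigr)=\pi'\bar x$ for the greedy permutation, which by the greedy optimality of \cite{Edmonds1970} is exactly $\max_{\pi\in\Pi_\sigma}\pi'\bar x$, the support value of the polymatroid at $\bar x$ (the Lov\'asz-extension identity, \cite{L:submodular-convex}). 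Since membership $(\bar x,\bar z)\in P$ gives $\bar z-\sqrt\sigma\ge\max_{\pi\in\Pi_\sigma}\pi'\bar x$, the point $(\bar x,\sqrt\sigma+\max_{\pi}\pi'\bar x)\in\conv(H_B)$ has $z$-coordinate at most $\bar z$. Finally, because $(0,1)$ is a recession direction of $H_B$, hence of $\conv(H_B)$, I would add the nonnegative amount $\bar z-\sqrt\sigma-\max_\pi\pi'\bar x$ in the $z$-direction to conclude $(\bar x,\bar z)\in\conv(H_B)$.

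The main obstacle is the reverse inclusion, and within it the single identity that ties everything together: that the greedy ordering simultaneously (a) produces a convex decomposition of $\bar x$ into nested $0/1$ vectors and (b) makes the corresponding $z$-value equal to $\max_{\pi\in\Pi_\sigma}\pi'\bar x$. This is exactly the content of Edmonds' greedy theorem and Lov\'asz's characterization of submodularity through the convexity of the Lov\'asz extension; the remaining ingredients, namely the telescoping/Abel bookkeeping and the recession-cone step, are routine.
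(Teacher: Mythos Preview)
Your proposal is correct. Note, however, that the paper does not actually prove Proposition~\ref{prop:convexHullHB}: it is stated as a known result, with the submodularity of the defining function and the polymatroid extreme-point characterization attributed to \cite{Edmonds1970}, and the convex-hull description attributed to \cite{L:submodular-convex}. Your argument is precisely the standard proof underlying those citations---validity of the polymatroid inequalities via $\pi(S)\le g(S)$, and sufficiency via the Lov\'asz-extension decomposition of $\bar x$ into nested indicator vectors combined with Edmonds' greedy optimality---so you have supplied the details the paper defers to the literature rather than offered an alternative route.
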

	
	In this paper, extended polymatroid are always defined with respect to a function of the form $f_\sigma(x)=\sqrt{\sigma+c'x}$. Thus, for simplicity, we will write $\Pi_\sigma$ instead of $\Pi(f_\sigma)$, and we will write $\Pi$ instead of $\Pi_0$.
	
	\cite{Edmonds1970} gives a characterization of the elements of $\Pi_\sigma$. In particular, $\pi\in \Pi_\sigma$ if, for a permutation $\left((1),(2),\ldots,(n) \right)$,
	\begin{align}
	\pi_{(k)}&=\sqrt{c_{(k)} +\sigma_{(k)}}-\sqrt{\sigma_{(k)}}, \text{ where}\label{eq:definitionPi}\\
	\sigma_{(k)}&=\sigma+\sum_{i=1}^{k-1}c_{(i)}.\notag
	\end{align}
	Moreover, Edmonds also shows that optimization over extended polymatroids can be solved by the greedy algorithm. Thus, 
	a point $\bar x \in \R_+^n$ can be separated from $\text{conv}(H_B)$ via the greedy algorithm by sorting $\bar x_i$ in non-increasing order
	in $O(n \log n)$.

	\begin{proposition}[Separation]
		\label{prop:separation}
		A point $\bar{x}\not \in \text{conv}(H_B)$ such that $\bar{x}_{(1)}\geq \bar{x}_{(2)} \geq \ldots \geq \bar{x}_{(n)}$ is separated from $\conv(H_B)$ by the inequality $\sqrt{\sigma}+\pi'x\leq z$, where $\pi$ satisfies \eqref{eq:definitionPi}.
	\end{proposition}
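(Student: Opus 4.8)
The plan is to reduce the separation claim to the convex hull description in Proposition~\ref{prop:convexHullHB} together with the optimality of the greedy algorithm over a polymatroid. First I would record what it means for the point $(\bar x,\bar z)$, which we may take to satisfy the bounds $\bar x\in[0,1]^n$ and $\bar z\ge 0$, to lie outside $\conv(H_B)$: by Proposition~\ref{prop:convexHullHB} the only inequalities available are the polymatroid inequalities $\pi'x\le z-\sqrt{\sigma}$ for $\pi\in\Pi_\sigma$, so $(\bar x,\bar z)\notin\conv(H_B)$ forces at least one of them to be violated, i.e.
\[
\max_{\pi\in\Pi_\sigma}\pi'\bar x \;>\; \bar z-\sqrt{\sigma}.
\]
It therefore suffices to show that the specific $\pi$ attached to the sorted order $\bar x_{(1)}\ge\cdots\ge\bar x_{(n)}$ attains this maximum, for then $(\bar x,\bar z)$ violates $\sqrt{\sigma}+\pi'x\le z$.

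Next I would identify $\Pi_\sigma$ as the extreme points of the polymatroid $P_\sigma=\{\pi\in\R^n:\pi(S)\le\sqrt{\sigma+c(S)}-\sqrt{\sigma}\ \text{for all } S\subseteq N\}$ associated with the nondecreasing submodular set function $S\mapsto\sqrt{\sigma+c(S)}-\sqrt{\sigma}$, as recalled just before the statement. Since $\pi\mapsto\pi'\bar x$ is linear, its maximum over the polytope $P_\sigma$ is attained at an extreme point, so $\max_{\pi\in\Pi_\sigma}\pi'\bar x=\max_{\pi\in P_\sigma}\pi'\bar x$; this lets me optimize over all of $P_\sigma$ rather than enumerating permutations.

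The heart of the argument is Edmonds' greedy optimality \citep{Edmonds1970}: for a nonnegative weight vector $\bar x$, the value $\max_{\pi\in P_\sigma}\pi'\bar x$ is achieved by the greedy vector obtained by ordering the weights non-increasingly and setting each coordinate to the marginal increment of the submodular function. For the order $\bar x_{(1)}\ge\cdots\ge\bar x_{(n)}$ these marginals are exactly $\pi_{(k)}=\sqrt{\sigma_{(k)}}-\sqrt{\sigma_{(k-1)}}$ with $\sigma_{(k)}=\sigma+\sum_{i\le k}c_{(i)}$, that is, precisely the $\pi$ of \eqref{eq:definitionPi}; and since $\bar x\in[0,1]^n$ is already sorted, this $\pi$ is optimal. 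For a self-contained check I would give the exchange/Abel-summation argument: writing $U_k=\{(1),\dots,(k)\}$ and using that the greedy $\pi$ is tight, $\pi(U_k)=\sqrt{\sigma+c(U_k)}-\sqrt{\sigma}$, while any $\pi'\in P_\sigma$ satisfies $\pi'(U_k)\le\sqrt{\sigma+c(U_k)}-\sqrt{\sigma}$, summation by parts gives
\[
\pi'\bar x=\sum_{k}\pi(U_k)\big(\bar x_{(k)}-\bar x_{(k+1)}\big)\ge\sum_{k}\pi'(U_k)\big(\bar x_{(k)}-\bar x_{(k+1)}\big)=(\pi')'\bar x,
\]
where $\bar x_{(n+1)}:=0$ and every factor $\bar x_{(k)}-\bar x_{(k+1)}\ge 0$ by the sorting and nonnegativity of $\bar x$.

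Combining the three steps, $\pi'\bar x=\max_{\pi'\in\Pi_\sigma}(\pi')'\bar x>\bar z-\sqrt{\sigma}$, so $(\bar x,\bar z)$ violates $\sqrt{\sigma}+\pi'x\le z$, proving that this inequality separates the point. The only nontrivial ingredient is the greedy optimality over the polymatroid; everything else is bookkeeping. Because this is Edmonds' classical result already cited in the surrounding text, I would invoke it directly and keep the Abel-summation verification as an optional remark ensuring the nonnegativity hypothesis $\bar x\ge 0$, guaranteed by $\bar x\in[0,1]^n$, is used exactly where needed.
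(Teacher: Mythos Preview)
Your proposal is correct and follows precisely the route the paper takes: the paper does not give a detailed proof of Proposition~\ref{prop:separation} but simply invokes \cite{Edmonds1970} for greedy optimality over the polymatroid, noting that separation then amounts to sorting $\bar x$ in nonincreasing order. Your write-up unpacks this citation into the explicit Abel-summation verification, which is fine but more than the paper itself provides.
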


	\subsubsection{Case with $X=\{0,1\}^n\times [0,1]^m$}

	\cite{Atamturk2008b} also give valid inequalities for the mixed-binary set with bounded continuous variables $H_X$.
	The assumption that the upper bound is $1$ is without loss of generality, since otherwise the continuous variables can be scaled.
	\begin{proposition}
		\label{prop:validIneqBounded1}
		For $T\subseteq M$, inequalities
		\begin{equation}
		\label{eq:polymatroidBoundedDominated}
		\sqrt{\sigma+\sum_{i\in T}d_iy_i^2}+\pi'x\leq z, \quad \pi\in \Pi_{\sigma+d(T)}
		\end{equation}
		are valid for $H_X$, where $X=\{0,1\}^n\times[0,1]^m$.
	\end{proposition}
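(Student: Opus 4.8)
The plan is to reduce the claim to the pure binary set $H_B$ by two elementary relaxations; the only substantive ingredient is a monotonicity property of the polymatroid coefficients with respect to the constant term.

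Fix $T\subseteq M$ and $\pi\in\Pi_{\sigma+d(T)}$, and take an arbitrary $(x,y,z)\in H_C$, so that $x\in\{0,1\}^n$, $y\in[0,1]^m$, and $z\ge 0$. First I would drop the continuous variables indexed by $M\setminus T$: since $d_i\ge 0$ and $y_i^2\ge 0$, the defining inequality of $H_C$ gives $z^2\ge\sigma+\sum_{i\in N}c_ix_i+\sum_{i\in T}d_iy_i^2$. Setting $s:=\sigma+\sum_{i\in T}d_iy_i^2$, we have $\sigma\le s\le\sigma+d(T)$ because $y_i\le 1$ for each $i$, and $z\ge\sqrt{\,s+\sum_{i\in N}c_ix_i\,}$ (both sides being nonnegative). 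Consequently it suffices to show that $\sum_{i\in N}\pi_ix_i\le\sqrt{\,s+\sum_{i\in N}c_ix_i\,}-\sqrt{s}$ for every $x\in\{0,1\}^n$, since then adding $\sqrt{s}=\sqrt{\sigma+\sum_{i\in T}d_iy_i^2}$ to both sides and using $z\ge\sqrt{s+\sum_{i\in N}c_ix_i}$ yields exactly \eqref{eq:polymatroidBoundedDominated}.

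For this last inequality I would argue as follows. Since $\pi\in\Pi_{\sigma+d(T)}$, Proposition~\ref{prop:convexHullHB} applied with constant $\sigma+d(T)$ (valid because $\sigma+d(T)\ge 0$) shows that $\sum_{i\in N}\pi_ix_i\le\sqrt{\,\sigma+d(T)+\sum_{i\in N}c_ix_i\,}-\sqrt{\sigma+d(T)}$ for all $x\in\{0,1\}^n$. Concavity of the square root makes the map $t\mapsto\sqrt{t+a}-\sqrt{t}$ nonincreasing on $\R_+$ for every fixed $a\ge 0$; applying this with $a=\sum_{i\in N}c_ix_i$ and with $s\le\sigma+d(T)$ gives $\sqrt{\sigma+d(T)+a}-\sqrt{\sigma+d(T)}\le\sqrt{s+a}-\sqrt{s}$. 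Chaining the two bounds yields $\sum_{i\in N}\pi_ix_i\le\sqrt{\,s+\sum_{i\in N}c_ix_i\,}-\sqrt{s}$, as required.

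The only real obstacle is isolating the monotonicity statement used in the third paragraph; everything else is bookkeeping about relaxing terms. It is worth emphasizing why a single coefficient vector $\pi\in\Pi_{\sigma+d(T)}$ works for all feasible continuous parts: the quantity $s$ depends on $y$, so one cannot literally substitute $y$ into a fixed binary set, but the reduced inequality must hold uniformly for $s$ ranging over $[\sigma,\sigma+d(T)]$, and the monotonicity of $t\mapsto\sqrt{t+a}-\sqrt{t}$ guarantees that the extreme case $s=\sigma+d(T)$ --- that is, setting the continuous variables in $T$ to their upper bounds when choosing $\pi$ --- dominates all the others.
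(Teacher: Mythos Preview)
Your proof is correct and follows the same approach the paper sketches in one sentence: ``Inequalities \eqref{eq:polymatroidBoundedDominated} are obtained by setting the subset $T$ of the continuous variables to their upper bounds and relaxing the rest.'' You supply the details the paper omits, in particular the monotonicity of $t\mapsto\sqrt{t+a}-\sqrt{t}$ that makes the upper-bound choice $s=\sigma+d(T)$ the binding case.
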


	Inequalities \eqref{eq:polymatroidBoundedDominated} are obtained by setting the subset $T$ of the continuous variables to their upper bounds and relaxing the rest, and they dominate any inequality of the form $$\sqrt{\sigma+\sum_{i\in T}d_iy_i^2}+\xi'x\leq z$$ with $\xi\in \R^n$. Although inequalities \eqref{eq:polymatroidBoundedDominated} are the strongest possible among inequalities that are linear in $x$ and conic quadratic in $y$, they may be weak or dominated by other classes of nonlinear inequalities.

	*******************
}

\section{The case of unbounded continuous variables}
\label{sec:unbounded}

In this section we focus on the case with unbounded continuous variables, i.e., on $H_\rev{\D}$, where $\rev{\D}= \{0,1\}^n \times \R_+^n$. 
In this case, since the continuous variables have no upper bound,  
the only class of valid inequalities of type \eqref{eq:polymatroidBoundedDominated} are the polymatroid inequalities
\begin{equation}
\label{eq:binaryPolymatroidCuts}
\sqrt{\sigma}+\pi'x \leq z, \;\; \forall \pi\in \Pi_\sigma
\end{equation}
themselves from the ``binary-only" relaxation by letting $T = \emptyset$. Inequalities \eqref{eq:binaryPolymatroidCuts} ignore the continuous variables and are, consequently, weak for $H_\rev{\D}$. Here, we define a new class of \textit{nonlinear} valid inequalities and prove that they are sufficient to define the convex hull of $H_\rev{\D}$.

Consider the inequalities
\begin{equation}
\label{eq:polymatroidUnbounded}
{\left(\sqrt{\sigma}+\pi'x\right)^2 +\sum_{i\in M}d_iy_i^2}\leq z^2,\quad \pi\in \Pi_\sigma.
\end{equation}

\ignore{
****************** SHOULD BE AS FOLLOWS ******************

Let $(\cdot)^+$ denote $\max\{0, \cdot\}$.

Consider the inequalities
\begin{equation}
\label{eq:polymatroidUnbounded}
\sqrt{\left((\sqrt{\sigma}+\pi'x)^+\right)^2 +\sum_{i\in M}d_iy_i^2}\leq z,\quad \pi\in \Pi_\sigma.
\end{equation}

as the auxiliary variable $s \ge 0$.

**********************************************************
}

\begin{proposition}
\label{prop:validityUnbounded}
Inequalities \eqref{eq:polymatroidUnbounded} are valid for $H_\rev\D$.
\end{proposition}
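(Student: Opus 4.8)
The plan is to verify the inequalities \eqref{eq:polymatroidUnbounded} directly on every point of $H_D$, reducing to the pure-binary case already handled by Proposition~\ref{prop:convexHullHB}. Fix $\pi\in\Pi_\sigma$ and take an arbitrary $(x,y,z)\in H_D$, so that $x\in\{0,1\}^n$, $y\in\R_+^m$, $z\ge 0$ and $\sigma+\sum_{i\in N}c_ix_i+\sum_{i\in M}d_iy_i^2\le z^2$. The key observation is that, since $x$ is integral, the polymatroid inequality \eqref{eq:extendedPolymatroidInequality} applied with the value $\sqrt{\sigma+\sum_{i\in N}c_ix_i}$ in place of $z$ is tight in the sense that $\sqrt{\sigma}+\pi'x\le \sqrt{\sigma+\sum_{i\in N}c_ix_i}$ for all $x\in\{0,1\}^n$; indeed this is exactly the statement that $\pi$ lies in the extended polymatroid $P(f_\sigma)$ with $f_\sigma(x)=\sqrt{\sigma+c'x}$, which holds by definition of $\Pi_\sigma$ (equivalently, it follows from Proposition~\ref{prop:convexHullHB} evaluated at $z=f_\sigma(x)$).

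With that bound in hand, I would argue in two cases according to the sign of $\sqrt{\sigma}+\pi'x$. If $\sqrt{\sigma}+\pi'x\ge 0$, then squaring the inequality $0\le\sqrt{\sigma}+\pi'x\le\sqrt{\sigma+\sum_{i\in N}c_ix_i}$ preserves the order, giving $(\sqrt{\sigma}+\pi'x)^2\le\sigma+\sum_{i\in N}c_ix_i$, and adding $\sum_{i\in M}d_iy_i^2$ to both sides yields
\[
\left(\sqrt{\sigma}+\pi'x\right)^2+\sum_{i\in M}d_iy_i^2\ \le\ \sigma+\sum_{i\in N}c_ix_i+\sum_{i\in M}d_iy_i^2\ \le\ z^2,
\]
which is \eqref{eq:polymatroidUnbounded}. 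If instead $\sqrt{\sigma}+\pi'x<0$, then $(\sqrt{\sigma}+\pi'x)^2\le\sigma$ is no longer automatic, but one can note that the components of $\pi$ can be taken nonnegative for $c\ge 0$ (each $\pi_{(k)}=\sqrt{\sigma_{(k)}}-\sqrt{\sigma_{(k-1)}}\ge 0$), so $\sqrt{\sigma}+\pi'x\ge\sqrt{\sigma}\ge 0$ and this case does not actually occur. This is precisely the subtlety flagged in the commented-out alternative statement in the excerpt, where a $(\cdot)^+$ is inserted; since here $\sigma\ge 0$ and $\pi\ge 0$, no truncation is needed and \eqref{eq:polymatroidUnbounded} as written is valid.

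The main obstacle, such as it is, is the squaring step: the map $t\mapsto t^2$ is monotone only on the nonnegatives, so the argument genuinely relies on establishing $\sqrt{\sigma}+\pi'x\ge 0$ for all $x\in\{0,1\}^n$, which in turn rests on the nonnegativity of the $\pi$-coefficients in $\Pi_\sigma$ coming from $c\in\R_+^n$ and $\sigma\ge 0$. Everything else is a one-line manipulation. I would therefore structure the proof as: (i) recall $\sqrt{\sigma}+\pi'x\le\sqrt{\sigma+c'x}$ from the definition of $\Pi_\sigma$ / Proposition~\ref{prop:convexHullHB}; (ii) observe $\pi\ge 0$ hence $\sqrt{\sigma}+\pi'x\ge\sqrt{\sigma}\ge 0$; (iii) square and add the $d_iy_i^2$ terms; (iv) conclude with the defining inequality of $H_D$.
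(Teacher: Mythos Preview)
Your proof is correct and is essentially the same argument as the paper's, just carried out inline rather than via an explicit extended formulation. The paper introduces an auxiliary variable $s\ge 0$ with $\sigma+\sum_{i\in N}c_ix_i\le s^2$ and $s^2+\sum_{i\in M}d_iy_i^2\le z^2$, invokes Proposition~\ref{prop:convexHullHB} to get $\sqrt{\sigma}+\pi'x\le s$, and then the inequality \eqref{eq:polymatroidUnbounded} follows; you do the same thing with $s=\sqrt{\sigma+c'x}$ substituted directly, and you are a bit more careful than the paper in noting that $\pi\ge 0$ (hence $\sqrt{\sigma}+\pi'x\ge 0$) so that the squaring step is legitimate.
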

\begin{proof}
Consider the extended formulation of $H_\rev\D$ given by
\begin{equation*}
\widehat{H}_\rev\D=\left\{(x,y)\in \rev\D, (z,s)\in\R_+^{2}: {s^2 +\sum_{i\in M} d_i y_i^2}\leq z^2, {\sigma+\sum_{i\in N}c_ix_i}\leq s^2\right\}.
\end{equation*}
The validity of inequalities \eqref{eq:polymatroidUnbounded} for $H_\rev\D$ follows directly from the validity of
the polymatroid inequality $\sqrt{\sigma}+\pi'x\leq s$, $\pi\in \Pi_\sigma$ (Proposition~\ref{prop:convexHullHB}) for $\widehat{H}_\rev\D$.
\end{proof}

\begin{remark}
 For $M=\emptyset$ inequalities \eqref{eq:polymatroidUnbounded} reduce to the  polymatroid inequalities \eqref{eq:extendedPolymatroidInequality}.
\end{remark}
	\ignore{
\begin{remark}
\label{rem:extended}
Although inequalities \eqref{eq:polymatroidUnbounded} are nonlinear in the original space of variables, they can be represented as linear inequalities in the extended formulation $\widehat{H}_\rev\D$. Such a linear representation is desirable when using \eqref{eq:polymatroidUnbounded} as cutting planes in branch-and-cut algorithms.
\end{remark}}

\begin{remark}
Since inequalities \eqref{eq:polymatroidUnbounded} correspond to polymatroid inequalities in an extended formulation, the separation for them is the same as in the binary case and can be done by sorting in $O(n\log n)$ (Proposition~\ref{prop:separation}).
\end{remark}

Inequalities \eqref{eq:polymatroidUnbounded} are obtained simply by extracting a submodular component from function ${f}$. The approach can be generalized to sets of the form \rev{
\begin{equation*}
U=\left\{x\in X, (y,z)\in \R_+^{m+1}: {h(x) +\sum_{i\in M} d_i y_i^2}\leq z^2\right\},
\end{equation*}
and $h:\{0,1\}^n\to \R_+$ is an arbitrary nonnegative function. Define $$U_0=\left\{x\in X, s\geq 0: \sqrt{h(x)}\leq s\right\}$$
and} observe that since $\rev{U_0}$ is a finite union of line segments, $\text{conv}(\rev{U_0})$ is a polyhedron. Moreover, valid inequalities for $\conv(\rev{U_0})$ of the form $\rev{\xi}'x\leq s$, $\rev{\xi \in \Xi}$, can be lifted into valid nonlinear inequalities for $U$ of the form 
\begin{equation}
\label{eq:nonlinearU}
{(\rev{\xi}'x)^2+\sum_{i\in M}d_iy_i^2}\leq z^2.
\end{equation} 
Proposition~\ref{prop:convexHullU} below implies inequalities of the form \eqref{eq:nonlinearU} are sufficient to describe $\text{conv}(U)$ if $\rev{\xi}'x\leq s$, $\rev{\xi \in \Xi}$, are
sufficient to describe $\text{conv}(\rev{U_0})$.

\ignore{
Inequalities \eqref{eq:polymatroidUnbounded} are obtained simply by extracting a submodular component from function ${f}$. The approach can be generalized to sets of the form 
\begin{equation*}
U=\left\{(x,s)\in K, (y,z)\in \R_+^{m+1}: {s^2 +\sum_{i\in M} d_i y_i^2}\leq z^2\right\},
\end{equation*}
where $K=\left\{x\in \bar{X}\subseteq\{0,1\}^n, s\geq 0: \sqrt{h(x)}\leq s\right\}$ and $h:\{0,1\}^n\to \R_+$ is an arbitrary nonnegative function. Observe that since $K$ is a finite union of line segments, $\text{conv}(K)$ is a polyhedron, and valid inequalities for $K$ of the form $\gamma'x\leq s$,  $\gamma \in \Gamma$, can be lifted into valid nonlinear inequalities for $U$ of the form 
\begin{equation}
\label{eq:nonlinearU}
{(\gamma'x)^2+\sum_{i\in M}d_iy_i^2}\leq z^2.
\end{equation} 
Proposition~\ref{prop:convexHullU} below implies inequalities of the form \eqref{eq:nonlinearU} are sufficient to describe $\text{conv}(U)$ if $\gamma'x\leq s$, $\gamma \in \Gamma$, are
sufficient to describe    $\text{conv}(K)$.
}

\begin{proposition}
\label{prop:convexHullU}
The convex hull of $U$ is described as
$$\text{conv}(U)=\rev{\left\{(x,y,z)\in \R_+^{n+m+1}:\exists s \text{ s.t. }(x,s)\in \conv(\rev{U_0}) \text{ and } s^2 +\sum_{i\in M} d_i y_i^2\leq z^2 \right\}}\cdot$$
\end{proposition}
\begin{proof}
{
Consider the optimization of an arbitrary linear function over \rev{the extended formulation of $U$ obtained by adding a variable $s\geq 0$ and the constraint $\sqrt{h(x)}\leq s$,}
\begin{align*}
\ \ \ \ \ \ \ \min\; & -a'x -b'y +r z\notag\\
\text{(BP)} \ \ \ \ \ \text{s.t.}\;& {s^2 +\sum_{i\in M}d_i y_i^2}\leq z^2, \ (x,s)\in \rev{U_0},  \ y\in \R_+^m, z\geq 0\notag\notag
\end{align*}
}
and over its convex relaxation,
\begin{align*}
\ \ \ \ \ \ \ \min\; & -a'x -b'y +r z\notag\\
\text{(P1)} \ \ \ \ \ \text{s.t.}\;& {s^2 +\sum_{i\in M}d_i y_i^2}\leq z^2,
(x,s)\in \text{conv}(\rev{U_0}), \  y\in \R_+^m, z\geq 0.\notag 
\end{align*}

We prove that for any linear objective both (BP) and (P1) are unbounded or (P1) has an optimal solution that is integer in $x$.
Without loss of generality, we can assume that $r> 0$ (if $r<0$ then both problems are unbounded, and if $r=0$ then (P1) reduces to a linear program over an integral polyhedron by setting $z$ sufficiently large, and is equivalent to (BP)), $r=1$ (by scaling), $b_i>0$ (otherwise $y_i=0$ in any optimal solution), and $d_i=1$ for all $i \in M$ (by scaling $y_i$). 

Eliminating the variable $z$ from (P1) we restate the problem as
\begin{align*}
 \ \ \text{(P2)} \ \ \ \  \min \left \{ -a'x -b'y + \sqrt{s^2 +\sum_{i\in M} y_i^2} \text{ : }\;
 (x,s)\in \text{conv}(\rev{U_0}), \
 y\in \R_+^m \right \} \cdot
\end{align*}
Note that if $y=0$ in an optimal solution of (P2), then (P2) reduces to a linear optimization over $\text{conv}(\rev{U_0})$, which has an optimal integer solution. Thus we assume that $\sqrt{s^2 +\sum_{i\in M} y_i^2}>0$, and in that case the objective function is differentiable and, by convexity of (P2), optimal solutions correspond to KKT points.
Let $\mu \in \R_+^m$ be the dual variables for constraints $y \ge 0$.
From the KKT conditions of (P2) with respect to $y$, we see that
\begin{align*}
-\mu_k &=b_k-\frac{y_k}{\sqrt{s^2+\sum_{i\in M}y_i^2}}, \  \forall k\in M.
\end{align*}
However, the complementary slackness conditions $y_k \mu_k = 0$ imply that $\mu_k = 0$ for all $k$, as otherwise $-\mu_k = b_k$ contradicts the assumption that $b_k > 0$. Therefore, it holds that
\begin{align*}
 y_k&=b_k\sqrt{s^2+\sum_{i\in M} y_i^2}, \ \ \forall k\in M.
\end{align*}
Defining $\beta=\sum_{i=1}^m b_i^2$, we have
\begin{align*}
\sum_{i\in M} b_i y_i=\beta\sqrt{s^2+\sum_{i\in M} y_i^2}
\end{align*}
and
\begin{align}
\label{eq:ySquared}
\sum_{i\in M} y_i^2&=\beta\left(s^2+\sum_{i\in M}y_i^2\right) .
\end{align}
Observe that if $\beta\geq 1$, equality \eqref{eq:ySquared} cannot be satisfied (unless $\beta=1$ and $s=0$), and the feasible (P2) is dual infeasible. {Indeed, let $\lambda>0$ and $\bar{y}_i=\lambda b_i$ for all $i\in M$,
and observe that for any value of $s$ $$\lim_{\lambda\to \infty}-b'\bar{y}+\sqrt{s^2+\sum_{i\in M}\bar{y}_i^2} =\begin{cases}-\infty & \text{if }\beta>1 \\ 0 &\text{if }\beta=1.\end{cases}$$ 
Thus, if $\beta>1$, then both 	problems (BP) and (P2) are unbounded. Moreover, if $\beta=1$, let $$(x^*,s^*)\in \argmin\limits_{(x,s)\in \text{conv}(\rev{U_0})}-a'x $$ with minimal value of $s^*$; if $s^*=0$, then $(x^*,\bar{y},s^*)$ is an optimal solution of both (BP) and (P2) for any $\lambda>0$, and if $s^*>0$ then there does not exist an optimal solution for problems (BP) and (P2), but infima of the objective functions are attained at $x^*$, $s^*$ and $y=\bar{y}$ as $\lambda\to \infty$.} 

If $\beta<1$, then we deduce from \eqref{eq:ySquared} that
\begin{align*}
\sum_{i\in M} y_i^2&=\frac{\beta}{1-\beta}s^2.
\end{align*}
Replacing the summands in the objective, we rewrite (P2) as
\begin{align}
\ \ \ \ \ \  \min\; & -a'x +s\rev{\sqrt{1-\beta}}  \notag\\
\text{(P3)} \ \ \ \ \ \ \text{s.t.}\;
&  (x,s)\in \text{conv}(\rev{U_0})\notag.
\end{align}
As $\beta < 1$, (P3) has an optimal solution and it is integral in $x$. \rev{By projecting out the additional variable $s$, we obtain the desired result.}
\end{proof}

\begin{remark}
From Proposition~\ref{prop:convexHullU} we see that, with no constraints on the continuous variables, describing the mixed-integer set $\text{conv}(H_X)$ reduces to describing a polyhedral set. Moreover, strong inequalities from pure binary sets \citep[e.g.,][]{Yu2015} can be naturally lifted into strong inequalities for $H_X$.
\end{remark}

\begin{corollary}
\label{cor:convexHullH}
Inequalities \eqref{eq:polymatroidUnbounded} and bound constraints completely describe $\text{conv}(H_\rev\D)$.
\end{corollary}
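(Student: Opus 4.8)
Corollary~\ref{cor:convexHullH} should follow as a direct specialization of Proposition~\ref{prop:convexHullU}. The strategy is to cast $H_D$ in the form of the set $U$ from Proposition~\ref{prop:convexHullU}, so that the general convex hull description there specializes to the claimed statement. Concretely, I would set $h(x) = \sigma + \sum_{i\in N} c_i x_i$ and $\bar X = \{0,1\}^n$, so that
\[
K = \Bigl\{ x\in\{0,1\}^n,\ s\ge 0: \sqrt{\sigma+\textstyle\sum_{i\in N}c_i x_i}\le s \Bigr\}
\]
is exactly the set $H_B$ (with the auxiliary variable $s$ playing the role of $z$ in the definition of $H_B$). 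Then the set $U$ built from this $K$, namely $\{(x,s)\in K,\ (y,z)\in\R_+^{m+1}: s^2 + \sum_{i\in M}d_i y_i^2 \le z^2\}$, is precisely the extended formulation $\widehat H_D$ appearing in the proof of Proposition~\ref{prop:validityUnbounded}; projecting out $s$ recovers $H_D$.

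**Key steps.** First I would invoke Proposition~\ref{prop:convexHullHB}, which states that $\conv(H_B) = \conv(K)$ is described by the bound constraints $x\in[0,1]^N$ together with the polymatroid inequalities $\pi'x \le s - \sqrt{\sigma}$ for all $\pi\in\Pi_\sigma$; in the notation of Proposition~\ref{prop:convexHullU} this is exactly saying that the linear inequalities $\gamma'x\le s$ ($\gamma\in\Gamma$) together with bounds describe $\conv(K)$. Second, applying Proposition~\ref{prop:convexHullU} gives
\[
\conv(U) = \Bigl\{ (x,s)\in\conv(K),\ (y,z)\in\R_+^m: s^2 + \textstyle\sum_{i\in M}d_i y_i^2 \le z^2 \Bigr\}.
\]
Third, I would substitute the description of $\conv(K)$ into this, obtaining that $\conv(U)$ is the set of $(x,s,y,z)$ with $x\in[0,1]^N$, $(y,z)\in\R_+^{m+1}$, $\bigl(\sqrt\sigma + \pi'x\bigr)\le s$ for all $\pi\in\Pi_\sigma$, and $s^2+\sum_{i\in M}d_i y_i^2\le z^2$. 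Fourth, I would project out $s$: since $z, \sqrt\sigma + \pi'x$ and the constraints are monotone in $s$, the smallest feasible $s$ for fixed $(x,y,z)$ is $s = \max\{0,\ \max_\pi(\sqrt\sigma+\pi'x)\}$, and eliminating $s$ shows that the projection of $\conv(U)$ onto $(x,y,z)$ is exactly $\{x\in[0,1]^N,\ (y,z)\in\R_+^{m+1}: (\sqrt\sigma+\pi'x)^2 + \sum_{i\in M}d_i y_i^2\le z^2\ \forall\pi\in\Pi_\sigma\}$, i.e., the set cut out by inequalities \eqref{eq:polymatroidUnbounded} and the bounds. Finally, since $H_D$ is the projection of $\widehat H_D = U$ onto $(x,y,z)$ and projection commutes with taking convex hulls, this projection equals $\conv(H_D)$.

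**Main obstacle.** None of the steps is deep; the only point requiring care is the projection argument eliminating $s$ (step four), where one must check that removing $s$ from a convex set described by these particular constraints yields precisely \eqref{eq:polymatroidUnbounded} with $(\cdot)$ in place of $(\cdot)^+$. The subtlety is that \eqref{eq:polymatroidUnbounded} as written squares $\sqrt\sigma + \pi'x$ without truncating at zero, whereas the honest projection gives $\bigl((\sqrt\sigma+\pi'x)^+\bigr)^2$; one must argue these define the same convex region — either because the untruncated inequality is implied by the others (a dominated, hence redundant, constraint when $\sqrt\sigma+\pi'x<0$), or by appealing to the commented-in corrected form of \eqref{eq:polymatroidUnbounded}. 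I would handle this by noting that whenever $\sqrt\sigma+\pi'x\le 0$ the inequality $(\sqrt\sigma+\pi'x)^2+\sum d_i y_i^2\le z^2$ is not implied in general, so strictly one should read \eqref{eq:polymatroidUnbounded} with the positive-part, and then the projection identity is immediate. Apart from this bookkeeping, the corollary is an immediate instantiation of Proposition~\ref{prop:convexHullU} with $K = H_B$, using Proposition~\ref{prop:convexHullHB} for the polyhedral description of $\conv(K)$.
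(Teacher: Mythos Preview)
Your approach is correct and essentially identical to the paper's: instantiate Proposition~\ref{prop:convexHullU} with $K=H_B$, plug in the polyhedral description of $\conv(H_B)$ from Proposition~\ref{prop:convexHullHB}, and project out the auxiliary variable $s$.

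One remark on the ``obstacle'' you flag: it is not actually an obstacle. Since the set function $f(x)=\sqrt{\sigma+\sum_{i\in N}c_ix_i}$ is non-decreasing (because $c\ge 0$), every extreme point $\pi\in\Pi_\sigma$ has $\pi\ge 0$; indeed, from \eqref{eq:definitionPi} one sees $\pi_{(k)}=\sqrt{\sigma_{(k)}}-\sqrt{\sigma_{(k-1)}}>0$. Hence on the box $x\in[0,1]^N$ one always has $\sqrt{\sigma}+\pi'x\ge\sqrt{\sigma}\ge 0$, and the positive-part truncation is superfluous. The projection of $\conv(U)$ onto $(x,y,z)$ therefore yields exactly the inequalities \eqref{eq:polymatroidUnbounded} as written, with no bookkeeping required.
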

\begin{proof}
Follows from Proposition~\ref{prop:convexHullU} with $\rev{U_0}=H_\rev{\B}$, where the convex hull of $H_\rev{\B}$ is given in Proposition~\ref{prop:convexHullHB}, and substituting out the auxiliary variable $s$. 
\end{proof}

\subsection{Comparison with inequalities in the literature}
\label{sec:case1Var}

As seen in this section inequalities \eqref{eq:polymatroidUnbounded} give the convex hull of $H_\rev\D$. Therefore, they are the strongest possible inequalities for $H_\rev\D$. It is of interest to study the relationships to inequalities previously given in the literature. It turns out that for the case of a single binary variable, they can be obtained as either split cuts or conic MIR inequalities based on a single disjunction. The equivalence does not hold in higher dimensions\rev{, as in such cases $H_\rev\D$ is a disjunction of $2^n$ sets and } neither split cuts nor conic MIR inequalities \rev{based on single disjunctions} are sufficient to describe $\conv(H_\rev\D)$.

To see the equivalence, we now consider the special case of conic quadratic constraint with a single binary variable $x$:
$$H^1=\left\{(x,y,z)\in \{0,1\}\times\R_+^{m+1}:\sqrt{\sigma+cx+\sum_{i\in M} d_iy_i^2} \leq z\right\}.$$

\subsubsection{Comparison with split cuts}
We first compare inequalities~\eqref{eq:polymatroidUnbounded} with the split cuts given in \cite{Modaresi2016}. Following the notation used by the authors, let $$B=\left\{(y,z)\in \R_+^{m+2}:\sqrt{\sigma+y_0^2+\sum_{i\in M} d_iy_i^2} \leq z\right\}$$ be the base set, let $F=\left\{y\in \R_+^{m+1}: 0\leq y_0\leq c\right\}$ be the forbidden set, and define $K=B\setminus\text{int}(F)$, where $\text{int}(F)$ denotes the interior of $F$. Letting $y_0:= \sqrt{c}x$, we see that $H^1$ and $K$ are equivalent.

First consider the case $\sigma=0$. From Corollary~\ref{cor:convexHullH} we see that that $$\text{conv}(H^1)=\left\{(x,y,z)\in [0,1]\times\R_+^{m+1}: \sqrt{cx^2+\sum_{i\in M} d_iy_i^2}\leq z\right\}.$$ 
Moreover, from Corollary 5 of \cite{Modaresi2016}, since $0\not\in (0,c)$, we find that $\text{conv}(K)=B$. Thus, the results coincide in that the convex hulls of $H^1$ and $K$ are the natural convex relaxations of the sets.

Now consider the case $\sigma>0$. From Corollary~\ref{cor:convexHullH} we see that that 
\begin{equation}
\label{eq:convexHullH1Sigma}\text{conv}(H^1)\!=\!\left\{\!(x,y,z) \! \in \! [0,1] \!\times\R_+^{m+1} \!: \! {\left(\!\sqrt{\sigma}\!+\!(\sqrt{c\!+\!\sigma}- \!\sqrt{\sigma})x\right)^2
	\!\!+\! \!\sum_{i\in M} \! d_iy_i^2}\leq z^2\right\}.\end{equation} 
Moreover, from Proposition 8 of \cite{Modaresi2016} we find that $$\text{conv}(K)=\left\{(y,z)\in \R_+^{m+2}:{\left(\sqrt{\sigma}+\frac{\sqrt{\sigma\!+\!c}-\sqrt{\sigma}}{\sqrt{c}}y_0\right)^2+\sum_{i\in M}d_iy_i^2}\leq z^2\right\}.$$ Thus, the results coincide again. 

\subsubsection{Comparison with \ins{conic} MIR inequalities}
We now compare inequalities~\eqref{eq:polymatroidUnbounded} with the \emph{simple nonlinear conic mixed-integer rounding inequality} given in \cite{Atamturk2010}. Letting $a=\sqrt{\sigma}+\sqrt{\sigma+c}$ and $b=\frac{\sqrt{\sigma}}{a}$, we can write 
$$H^1=\left\{(x,y,z)\in \{0,1\}\times\R_+^{m+1}:{(x-b)^2+\sum_{i\in M} d_i\frac{y_i^2}{a^2}} \leq \frac{z^2}{a^2}\right\}.$$ Note that if $\sigma=0$ then $b=0$ and the MIR inequalities reduces to the original inequality--which defines the convex hull of $H^1$. If $\sigma>0$, then $\lfloor b \rfloor = 0$ and the simple mixed integer rounding inequality is 
\begin{align*}&{\left((1-2b)x+b\right)^2+\sum_{i\in M} d_i\frac{y_i^2}{a^2}}\leq \frac{z^2}{a^2} \\
\Leftrightarrow &{\left((1-2\frac{\sqrt{\sigma}}{\sqrt{\sigma}+\sqrt{\sigma+c}})x+\frac{\sqrt{\sigma}}{a}\right)^2+\sum_{i\in M} d_i\frac{y_i^2}{a^2}}\leq \frac{z^2}{a^2}\\
\Leftrightarrow &{\left((\frac{\sqrt{\sigma+c}-\sqrt{\sigma}}{a})x+\frac{\sqrt{\sigma}}{a}\right)^2+\sum_{i\in M} d_i\frac{y_i^2}{a^2}}\leq \frac{z^2}{a^2},
\end{align*}
and multiplying both sides by $a^2$ we get \eqref{eq:convexHullH1Sigma}.

\subsection{Set $R_X$ with rotated cone}
\label{sec:rotated}
Here we consider the set $R_X$ and, more generally, sets of the form written in conic quadratic form
\begin{equation*}
\rev{U_R=\left\{x\in X, (y,w,z)\in \R_+^{m+2}: {h(x)+\sum_{i\in M}d_iy_i^2+(w-z)^2}\leq (w+z)^2 \right\},}
\end{equation*}
where $h:X\to \R_+$.

Observe that the approach discussed in Section~\ref{sec:unbounded} can be used for $R_X$ and $U_R$. For example, using inequalities \eqref{eq:polymatroidUnbounded} for $R_X$ results in the valid inequalities
\begin{equation}
\label{eq:polymatroidRotated}
{\left(\sqrt{\sigma}+\pi'x\right)^2 +\sum_{i\in M}d_iy_i^2 +(w-z)^2}\leq (w+z)^2,\quad \pi\in \Pi_{\sigma}.
\end{equation}
We can also write inequalities \eqref{eq:polymatroidRotated} in rotated cone form, 
\begin{equation*}
(\sqrt{\sigma}+\pi'x)^2+\sum_{i\in M}d_iy_i^2 \leq 4wz,\quad \pi\in \Pi_\sigma.
\end{equation*} 

Note, however, that the second-order cone constraint defining $R_X$ and $U_R$ has additional structure, namely the continuous nonnegative variables $w$ and $z$ in both sides of the inequality. Nevertheless, as Proposition~\ref{prop:convexHullR} states, 
inequalities \eqref{eq:polymatroidRotated} are sufficient to characterize $\conv(R_X)$. The proof of Proposition~\ref{prop:convexHullR} is provided in Appendix~\ref{sec:appendix}.

\begin{proposition}
	\label{prop:convexHullR}
	The convex hull of $U_R$ is described as
	\small
	$$\rev{\text{conv}(U_R)=\left\{(x,y,w,z)\in \R_+^{n+m+2}:\exists s \text{ s.t. }(x,s)\in \conv(U_0)\text{ and } {s^2+\sum_{i\in M}d_iy_i^2}\leq 4wz\right\} \cdot}$$
\end{proposition}

\begin{remark} \label{remark:ratio}
	Consider again the set given by \eqref{eq:pOrderConstraint}--\eqref{eq:pOrderNonnegativity} in Section~\ref{sec:probabilityMax}, and observe that it corresponds to $U_R$ with $m=0$ and $\rev{U_0}=\left\{x\in X,\; s\in \R_+: 2\sqrt[\uproot{3}4]{c'x}\leq s\right\}$. Thus, if $X=\left\{0,1\right\}^n$, then $\text{conv}(U_R)$ is given by bounds constraints and inequalities
	$$
	(\rev{\xi}'x)^2\leq wz,\; \rev{\xi} \in \Pi(\bar{h}),
	$$
	where $\Pi(\bar{h})$ is the set of extreme points of the \rev{extended} polymatroid associated with the submodular function $\bar{h}(s)=2\sqrt[\uproot{3}4]{c'x}$. 
\end{remark}

\section{The case of bounded continuous variables}
\label{sec:bounded}
In this section we study $H_\rev\G$ with bounded continuous variables, i.e.,  by scaling $\rev\G=\left\{0,1\right\}^n\times [0,1]^m$. We first give a description of $\text{conv}(H_\rev\G)$ for the case $n=m=1$ and discuss the difficulties in obtaining the convex hull description for the general case (Section~\ref{sec:constraintsBounded}). Then we describe valid \rev{conic quadratic} inequalities that can be \rev{used with off-the-shelf solvers} (Section~\ref{sec:validBounded}).

\subsection{Two variable case with a bounded continuous variable}
\label{sec:constraintsBounded}
In this section we study the three-dimensional set
$${L}=\left\{(x,y,z)\in \{0,1\}\times [0,1] \times \R_+:\sqrt{\sigma+cx+dy^2}\leq z\right\},$$
where $\sigma\geq 0$ is a constant. First we give its convex hull description.

\begin{proposition}
	\label{prop:convexHullL}
	The convex hull of $L$ is described as
$$\text{conv}(L)= \left\{(x,y,z)\in [0,1]\times [0,1] \times \R_+:g(x,y)\leq z\right\},
\text{ where}$$ 
$$g(x,y) \! = \! \! \begin{cases}g_1(x,y) \!= \! \sqrt{\left(\sqrt{\sigma}+x(\sqrt{c+\sigma}-\sqrt{\sigma}) \right)^2+dy^2} \! \! \! & \text{if }y\leq x+(1-x)\sqrt{\frac{\sigma}{\sigma+c}}\\
g_2(x,y) \! = \! \sqrt{\sigma(1-x)^2+d(y-x)^2}+x\sqrt{\sigma+c+d} \! \! \! & \text{otherwise.}\end{cases}$$
\end{proposition}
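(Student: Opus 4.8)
The plan is to establish the two inclusions $\text{conv}(L) \supseteq \{(x,y,z) : g(x,y) \le z, \ (x,y) \in [0,1]^2\}$ (validity of the description) and $\text{conv}(L) \subseteq \{\dots\}$ (completeness) separately. For validity, I first note that $g$ is the pointwise maximum of the two pieces on the overlap in the sense that should be checked: $g_1$ and $g_2$ agree on the curve $y = x + (1-x)\sqrt{\sigma/(\sigma+c)}$, and away from it each piece is the relevant one; the substantive claim is that $g$ is \emph{convex} on $[0,1]^2$. Convexity of $g_1$ is immediate since it is a composition of the affine map $(x,y)\mapsto(\sqrt\sigma + x(\sqrt{c+\sigma}-\sqrt\sigma),\sqrt d\, y)$ with the Euclidean norm. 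Convexity of $g_2$ is likewise immediate as the sum of a norm of an affine map and a linear term. Then I would verify that $g$ is convex by checking that the two pieces fit together in a convex way — e.g. that the gradient of $g_1$ dominates (in the appropriate directional sense) that of $g_2$ across the switching curve, so that $g = \max\{g_1, g_2\}$ on all of $[0,1]^2$; once $g = \max\{g_1,g_2\}$ is established, convexity follows from convexity of each piece. Finally, to see the epigraph of $g$ is contained in $\text{conv}(L)$, I evaluate at $x=0$ and $x=1$: at $x=0$, $g(0,y)=\sqrt{\sigma+dy^2}$ which is exactly $f(0,y)$; at $x=1$, since $1 + 0 \cdot\sqrt{\sigma/(\sigma+c)} = 1 \ge y$, we are always in the first branch and $g(1,y)=\sqrt{(c+\sigma)+dy^2}=f(1,y)$. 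Hence $\{g\le z\}$ agrees with $L$ on the faces $x\in\{0,1\}$, and since $\{g \le z, (x,y)\in[0,1]^2\}$ is convex, it contains the convex hull of its restriction to those two faces, which is $\text{conv}(L)$... wait, that gives the reverse inclusion; for validity I instead argue every point of $\{g\le z\}$ is a convex combination of points of $L$ by writing $(x,y,z)$ with $x\in[0,1]$ as $x\cdot(1,y_1,z_1) + (1-x)\cdot(0,y_0,z_0)$ for suitable $y_0,y_1,z_0,z_1$ — the decomposition of $y$ and the concavity/linearity structure in $x$ for fixed "effective" continuous part is what makes this work, and the two branches of $g$ correspond to the two regimes of how $y$ should be split.

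For completeness, I would take the standard route via linear optimization: show that for every linear objective $-a x - b y + r z$ (with $r>0$, $b>0$ without loss of generality, as in the proof of Proposition~\ref{prop:convexHullU}), the minimum over $L$ equals the minimum over $\{(x,y,z): g(x,y)\le z, (x,y)\in[0,1]^2\}$. Minimizing over the proposed set, one pushes $z = g(x,y)$ and then minimizes $-ax - by + r\,g(x,y)$ over the square $[0,1]^2$; by convexity this is a convex program, and I would analyze its KKT/optimality conditions branch by branch, showing the optimum always occurs at a point with $x\in\{0,1\}$ (the two branches of $g$ are precisely engineered so that, for fixed $y$, the $x$-marginal is concave in the right region, forcing $x$ to an endpoint). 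Once $x\in\{0,1\}$ at optimum, feasibility for $L$ is automatic, so the two optima coincide and the convex hull description follows.

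The main obstacle I expect is the convexity verification for $g$ as a whole — specifically, checking that the two smooth pieces $g_1$ and $g_2$ glue along the curve $y = x + (1-x)\sqrt{\sigma/(\sigma+c)}$ into a globally convex function. This requires confirming (i) that $g_1 = g_2$ on the curve, (ii) that on each side the correct branch is the larger of the two (so $g = \max\{g_1,g_2\}$, which immediately yields convexity), or alternatively (iii) a first-order matching condition showing $g$ is $C^1$ across the curve together with convexity of each branch. The bookkeeping with the nested square roots and the parameter $\sqrt{\sigma/(\sigma+c)}$ is where the real work lies; the linear-optimization argument for completeness, while needing a careful case split, is routine once convexity of $g$ is in hand. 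A secondary subtlety is handling the degenerate case $\sigma = 0$, where $\sqrt{\sigma/(\sigma+c)} = 0$ and the switching curve becomes $y \le x$, so $g$ simplifies and should be checked to reduce to the $\sigma=0$ specialization of Corollary~\ref{cor:convexHullH} extended by the bound $y\le 1$.
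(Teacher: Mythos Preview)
Your primary route to convexity of $g$---showing $g=\max\{g_1,g_2\}$ on all of $[0,1]^2$---does not work: in the first region one has $g=g_1<g_2$ strictly. Indeed, $g_1$ is the value of the disjunctive minimization with the bound $y_2\le 1$ dropped, so $g_1\le g$ everywhere; on the other hand, for $y\ge x$ the expression $g_2$ is the value of a \emph{feasible} decomposition with $y_2=1$ forced, so $g_2\ge g$ there. In the interior of region~1 intersected with $\{y>x\}$ both inequalities are strict, giving $g=g_1<g_2=\max\{g_1,g_2\}$. The paper's Example~1 makes exactly this point (``$g_2$ is always above $g_1$'', and $g_2\le z$ even cuts into $\text{conv}(L)$ on region~1). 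Your fallback (iii) via a $C^1$ match along the (affine) switching line is salvageable, but you have not carried out the gradient computation, and your linear-optimization step for ``completeness'' also leans on convexity of $g$---so the gap is load-bearing.

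The paper avoids a separate convexity verification and any linear-optimization argument by going directly to the decomposition you eventually invoke: $(x,y,z)\in\text{conv}(L)$ if and only if $z\ge\min\big\{(1-x)\sqrt{\sigma+dy_1^2}+x\sqrt{\sigma+c+dy_2^2}:\ (1-x)y_1+xy_2=y,\ 0\le y_1,y_2\le 1\big\}$. Solving this convex parametric program via KKT produces the two branches---$g_1$ when the unconstrained minimizer has $y_2^*\le 1$, $g_2$ when $y_2$ hits its upper bound---and the ``if and only if'' delivers both inclusions at once. Once you pivot to the decomposition idea, your two-inclusion plan collapses to this single computation; the separate convexity and linear-optimization steps are unnecessary detours, and the $\max$ claim is the part that is genuinely wrong.
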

\begin{proof}
	A point $(x,y,z)$ belongs to $\text{conv}(L)$ if and only if there exist $x_1,x_2,y_1,y_2,z_1,z_2,$ and $0\leq\lambda\leq 1$ such that the system
	\begin{align}
	x&=(1-\lambda)x_1+\lambda x_2 \label{eq:condX}\\
	y&=(1-\lambda)y_1 + \lambda y_2\label{eq:condY}\\
	z&=(1-\lambda)z_1 + \lambda z_2\label{eq:condZ}\\
	z_1&\geq \sqrt{\sigma+dy_1^2}\label{eq:cond1}\\
	z_2&\geq \sqrt{\sigma+c+d y_2^2}\label{eq:cond2}\\
	0&\leq y_1,y_2 \leq 1,\; x_1=0,\; x_2=1\label{eq:condBounds}
	\end{align}
	is feasible. Observe that from \eqref{eq:condX} and \eqref{eq:condBounds} we can conclude that $\lambda =x$.
	Also observe that from \eqref{eq:condX}, \eqref{eq:cond1} and \eqref{eq:cond2} we have that
	\begin{align*}z&=(1-x)z_1 + x z_2\\
	\Leftrightarrow z&\geq (1-x)\sqrt{\sigma+dy_1^2}+x\sqrt{\sigma+c+d y_2^2}.
	\end{align*}
	Therefore, the system is feasible if and only if
	\begin{align}
	z\geq \min_{y_1,y_2}\;& (1-x)\sqrt{\sigma+dy_1^2}+x\sqrt{\sigma+c+d y_2^2}\label{eq:objConvexHull}\\
	\text{s.t.}\; & y=(1-x)y_1 +x y_2 \tag{$\gamma$}\\
	\rev{(\text{CH})} \ \ \ \ \ \ \ \ \ \ \ \ \  \ \ & y_1\leq 1 \tag{$\alpha_1$}\\
	& y_2\leq 1 \tag{$\alpha_2$}\\
	& y_1\geq 0\tag{$\beta_1$}\\
	& y_2\geq 0, \tag{$\beta_{2}$}
	\end{align}
	and let $\gamma$, $\alpha\rev{=(\alpha_1,\alpha_2)}$ and $\beta\rev{=(\beta_1,\beta_2)}$ be the dual variables of the optimization problem above. Note that the objective function is differentiable even if $\sigma=0$ since in that case the function $\sqrt{\sigma+dy_1^2}$ reduces to the linear function $\sqrt{d}y_1$. Moreover, the optimization problem is convex, and from KKT conditions for variables $y_1$ and $y_2$ we find that
	\begin{align}
	-(1-x)\frac{d y_1}{\sqrt{\sigma +d y_1^2}}&=\gamma (1-x)+\alpha_{1}-\beta_{1}\notag\\
	-x\frac{d y_2}{\sqrt{\sigma+c+d y_2^2}}&=\gamma x+\alpha_{2}-\beta_{2}\notag\\
	\implies \frac{ y_1}{\sqrt{\sigma+dy_1^2}}+\bar{\alpha}_{1}-\bar{\beta}_{1}&=\frac{ y_2}{\sqrt{\sigma+c+d y_2^2}}+\bar{\alpha}_{2}-\bar{\beta}_{2}, \label{eq:KKTconditions}
	\end{align}
	where $\bar{\alpha}, \bar{\beta}$ correspond to $\alpha$ and $\beta$ after scaling. We deduce from \eqref{eq:KKTconditions} and complementary slackness that $y_1, y_2>0$ (unless $y=0$) and that $y_1\leq y_2$:  if $y_1=0$ and $y_2>0$ then $\bar{\alpha}_1=\bar{\beta}_2=0$,  and \eqref{eq:KKTconditions} reduces to $-\bar{\beta}_{1}=\nicefrac{ y_2}{\sqrt{\sigma+c+d y_2^2}}+\bar{\alpha}_{2}$, which has no solution since the right-hand-side is positive; letting $y_2=0$ and $y_1>0$ results in a similar contradiction; and if $0<y_2< y_1$ then $\bar{\beta}_1=\bar{\alpha}_2=\bar{\beta}_2=0$ and \eqref{eq:KKTconditions} reduces to $\nicefrac{y_1}{\sqrt{\sigma+dy_1^2}}+\bar{\alpha}_1=\nicefrac{ y_2}{\sqrt{\sigma+c+d y_2^2}}$, which has no solution since $y_1> y_2$ implies that $\nicefrac{y_1}{\sqrt{\sigma+dy_1^2}}>\nicefrac{y_2}{\sqrt{\sigma+dy_2^2}}>\nicefrac{y_2}{\sqrt{\sigma+c+dy_2^2}}$.
	
	Therefore, for an optimal solution either $0<y_1,y_2<1$ (and $\bar{\alpha}=\bar{\beta}=0$) or $y_2=1$ (and $\bar{\alpha}_2\geq 0$).
	If $\bar{\alpha}=\bar{\beta}=0$, then
	\begin{align*}
	y_1^*&= y\frac{\sqrt{\sigma}}{x\sqrt{c+\sigma}+(1-x)\sqrt{\sigma}}\quad \text{ and}\\
	y_2^*&= y\frac{\sqrt{c+\sigma}}{x\sqrt{c+\sigma}+(1-x)\sqrt{\sigma}}
	\end{align*}
	satisfy conditions \eqref{eq:condY} and \eqref{eq:KKTconditions}. \rev{Thus, if $y_2^*\leq 1$,} then $y_1^*, y_2^*$ also satisfy the bound constraints and correspond to an optimal solution to problem \rev{(CH)}. Replacing \rev{$(y_1,y_2)$ by their optimal values $(y_1^*,y_2^*)$} in \eqref{eq:objConvexHull}, we find that $$z\geq \sqrt{\left(\sqrt{\sigma}+x(\sqrt{c+\sigma}-\sqrt{\sigma}) \right)^2+dy^2}.$$
	\rev{The condition $y_2^*\leq 1$ is equivalent to 
		\begin{align*}
	y&\leq \frac{x\sqrt{c+\sigma}+(1-x)\sqrt{\sigma}}{\sqrt{c+\sigma}}=x+(1-x)\sqrt{\frac{\sigma}{c+\sigma}}.
		\end{align*}}
	
	 On the other hand, if $y_2^*>1$, an optimal solution to the optimization problem \rev{(CH)} is given by $\bar{y}_2=1$ and $\bar{y}_1=\frac{y-x}{1-x}$. Substituting \rev{$(y_1,y_2)$ by their optimal values} in \eqref{eq:objConvexHull},
	$$z\geq \sqrt{\sigma(1-x)^2+d(y-x)^2}+x\sqrt{\sigma+c+d}$$ when $y\geq x+(1-x)\sqrt{\frac{\sigma}{\sigma+c}}$.
\end{proof}

Note that inequality $g_1(x,y)\leq z$ is a special case of inequalities \eqref{eq:polymatroidUnbounded}. If $\sigma=0$, then we \rev{find that $g_2(x,y)\leq z$ reduces to $\sqrt{d}y+x(\sqrt{c+d}-\sqrt{d})\leq z$,}
which is a special case of inequalities \eqref{eq:polymatroidBoundedDominated}. However, inequality $g_2(x,y)\leq z$ is not valid if $\sigma>0$. In particular, it cuts off the feasible point $(x,y,z)=(1,0,\sqrt{\sigma+c})$. Moreover, it can be shown that the inequality $g_2(x,y)\leq z$ cuts off portions of $\text{conv}(L)$ whenever $y\leq x+(1-x)\frac{\sqrt{\sigma}}{\sqrt{\sigma+c}}$.

\begin{example}
Consider the set $L$ with $\sigma=d=1$ and $c=2$.
Figure~\ref{fig:example1} shows functions $g_1$ and $g_2$ when $x=0.5$ is fixed, and illustrates the comments above. We see that the function $g_2$ is always ``above" the function $g_1$, and cuts the convex hull of $L$ (the shaded region) whenever $y\leq x+(1-x)\frac{\sqrt{\sigma}}{\sqrt{\sigma+c}}$.
\begin{figure}[h]
\begin{center}
\includegraphics[width=0.7 \linewidth]{./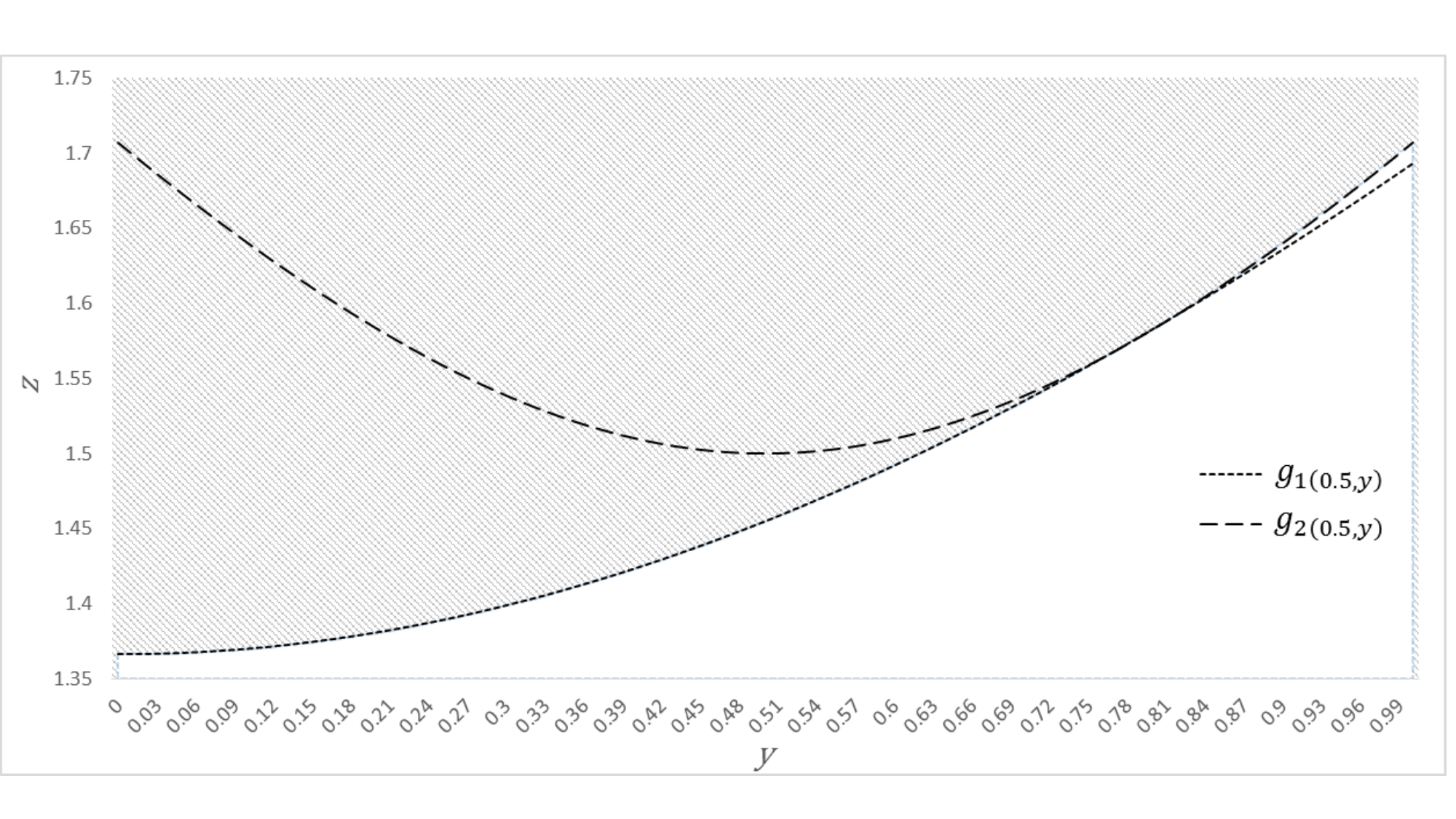}
\caption{Functions $g_1$, $g_2$ with $\sigma = d=1$, $c=2$ ($x=0.5$).}
\label{fig:example1}
\end{center}
\end{figure}
\end{example}

Unfortunately, Proposition~\ref{prop:convexHullL} does not help to describe the convex hull of $H_\rev\G$ with more than one bounded variable. Additionally, piecewise \rev{valid} functions like $g(x,y)$ in Proposition~\ref{prop:convexHullL} cannot be directly used with standard algorithms \rev{for convex mixed-integer optimization}. Thus, we now turn our attention to deriving inequalities that are valid and can be implemented as \rev{conic quadratic cuts}, if not sufficient to describe $\text{conv}(H_\rev\G)$ in general.

\subsection{The general (multi-variable) case}
\label{sec:validBounded}
To obtain valid inequalities for $H_\rev\G$ we write the conic quadratic constraint in extended form 
for a subset $T\subseteq M$ of the continuous variables:
\begin{align}
&{s^2+\sum_{i \in M\setminus T}d_iy_i^2}\leq z^2\notag\\
&\rev{\sigma+}\sum_{i\in N}c_i x_i + \sum_{i \in T}d_iy_i^2 \leq s^2. \label{eq:singleBounded}\\
&x\in \{0,1\}^n, y\in [0,1]^{M}, s \ge 0.  \notag
\end{align} 
Applying inequality \eqref{eq:polymatroidBoundedDominated} to \eqref{eq:singleBounded}
and eliminating the auxiliary variable $s$, we obtain the inequalities
\begin{equation}
\label{eq:polymatroidBounded}
{\left(\sqrt{\sigma+\sum_{i\in T}d_iy_i^2}+\pi'x\right)^2 +\sum_{i\in M\setminus T}d_iy_i^2}\leq z^2,\quad \pi\in \Pi_{\sigma+d(T)}.
\end{equation}
\begin{proposition}
	\label{prop:validityBounded}
	For $T\subseteq M$
	inequalities \eqref{eq:polymatroidBounded} are valid for $H_\rev\G$.
\end{proposition}

Note that inequalities \eqref{eq:polymatroidBounded} generalize or strengthen the previous valid inequalities proposed in this paper and other inequalities in the literature.
\begin{remark}
	\label{rem:dominateBounded}
	For $T=\emptyset$ inequalities \eqref{eq:polymatroidBounded} coincide with inequalities \eqref{eq:polymatroidUnbounded}. For $T=M$  inequalities \eqref{eq:polymatroidBounded} coincide with inequalities \eqref{eq:polymatroidBoundedDominated}. If $T\subset M$, then inequalities \eqref{eq:polymatroidBounded} dominate inequalities \eqref{eq:polymatroidBoundedDominated}.
\end{remark}

\setcounter{example}{0}
\begin{example}[Continued]
	We obtain from $\eqref{eq:polymatroidBounded}$ the valid inequality
	$$g_3(x,y)=\sqrt{\sigma+dy^2}+x\left(\sqrt{\sigma+c+d}-\sqrt{\sigma+d}\right)\leq z$$
	for $L$. 
	As Figure~\ref{fig:example2} shows, the inequality provides a good approximation of $L$ for the example considered.
	
	\begin{figure}[h]
		\begin{center}
			\includegraphics[width=0.7 \linewidth]{./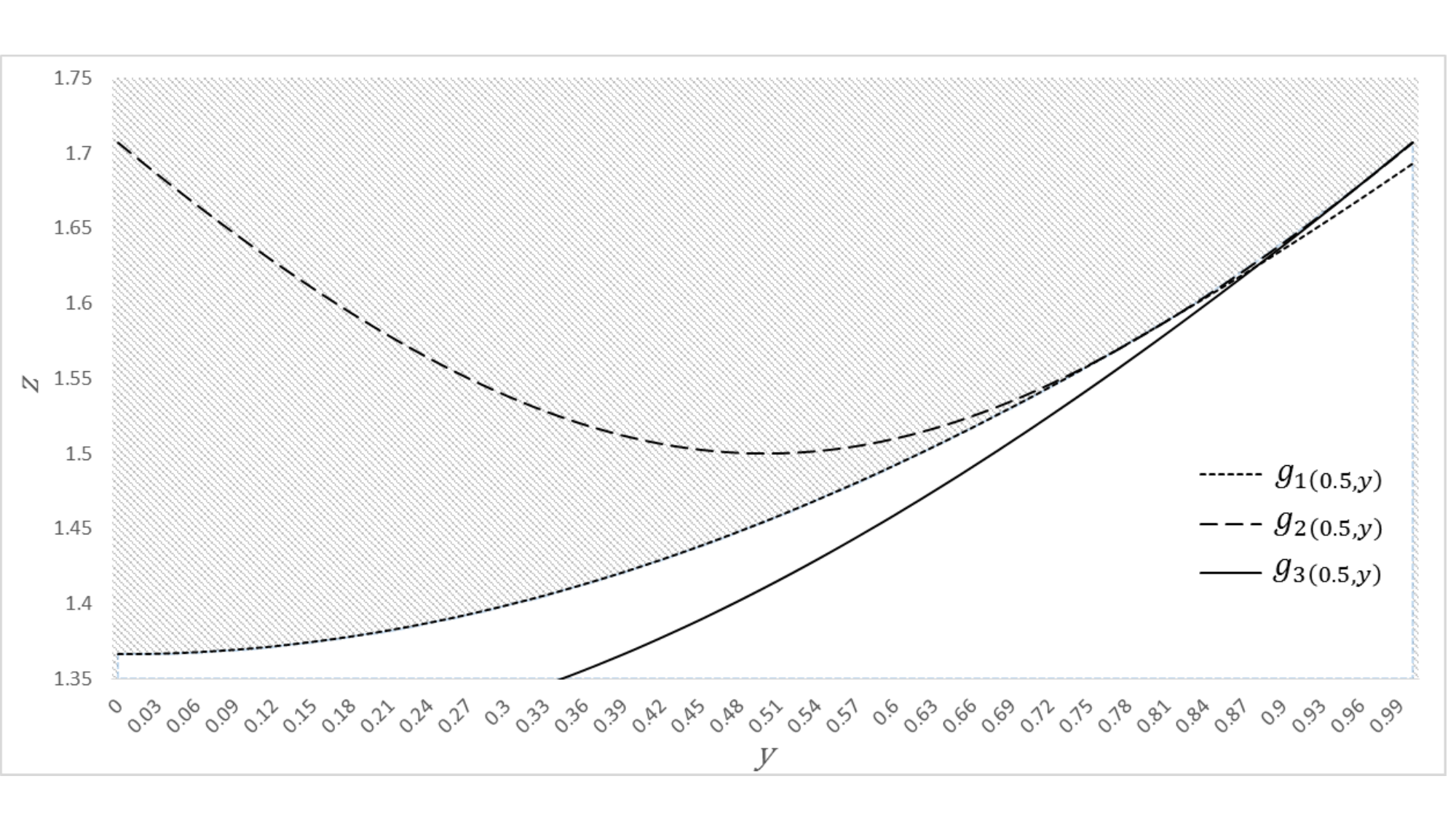}
			\caption{Functions $g_1$, $g_2$, $g_3$ with $\sigma=d=1$, $c=2$ $(x=0.5)$.}
			\label{fig:example2}
		\end{center}
	\end{figure}
\end{example}

\section{Valid inequalities for general $H_X$}
\label{sec:constrained}

In this section we derive  inequalities that exploit the structure for an arbitrary set $X \subseteq \rev\D$.
We first describe a lifting procedure for obtaining valid inequalities for \emph{any} mixed-binary set $X$, where computing each coefficient requires solving an integer optimization problem (Section~\ref{sec:validInequalitiesConstrained}). Then, we propose an approach based on linear programming to efficiently compute weaker valid inequalities (Section~\ref{sec:implementationConstrained}).

\subsection{General mixed-binary set $X$}
\label{sec:validInequalitiesConstrained}
We now consider valid inequalities for $H_X$ where $X\subseteq \rev\D$. The inequalities described here have a structure similar to the nonlinear extended polymatroid inequalities \eqref{eq:polymatroidUnbounded} and \eqref{eq:polymatroidBounded}.  
For a given a permutation $\left((1),(2),\ldots,(n)\right)$ of $N$ and $T\subseteq M$, let
\begin{align}
h_k(x,y)&=\sigma+\sum_{i=1}^{k-1}c_{(i)} x_{(i)}+\sum_{i\in T}d_iy_i^2\notag\\
\bar{\sigma}_{(k)}&=\max\left\{h_k(x,y) : (x,y)\in X, x_k=1\right\}, \text{ and}\label{eq:definitionSigma}\\
\rho_{(k)}&=\begin{cases}\sqrt{c_{(k)} +\bar{\sigma}_{(k)}}-\sqrt{\bar{\sigma}_{(k)}}& \text{if }\bar{\sigma}_{(k)}<\infty\\ 0 & \text{otherwise.}\label{eq:definitionRho} \end{cases}
\end{align}
Consider the inequality
\begin{equation}
\label{eq:polymatroidConstrained}
{\left(\sqrt{\sigma+\sum_{i\in T}d_iy_i^2}+\sum_{i=1}^n \rho_{(i)}x_{(i)}\right)^2+\sum_{i\in M\setminus T} d_iy_i^2}\leq z^2.
\end{equation}

\begin{proposition}
\label{prop:validityConstrained}
For $T\subseteq M$
inequalities \eqref{eq:polymatroidConstrained} are valid for $H_X$.
\end{proposition}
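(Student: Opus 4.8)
The plan is to reduce Proposition~\ref{prop:validityConstrained} to the validity of the polymatroid inequalities already established in the binary case (Proposition~\ref{prop:convexHullHB}), applied to a suitably chosen ground-set function. First I would fix $T\subseteq M$ and introduce the auxiliary variable $s$ via the extended formulation, writing the conic constraint defining $H_X$ as the pair
\[
s^2+\sum_{i\in M\setminus T}d_iy_i^2\le z^2,\qquad \sigma+\sum_{i\in N}c_ix_i+\sum_{i\in T}d_iy_i^2\le s^2,
\]
exactly as in \eqref{eq:singleBounded}. In this extended space it suffices to show that the linear-in-$x$ inequality
\[
\sqrt{\sigma+\sum_{i\in T}d_iy_i^2}+\sum_{i=1}^n\rho_{(i)}x_{(i)}\le s
\]
holds for every $(x,y,s)$ in the extended set, since squaring it and substituting the first constraint yields \eqref{eq:polymatroidConstrained} after eliminating $s$. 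So the whole content is the validity of this linear inequality.

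Next I would verify the linear inequality pointwise. Fix $(x,y)\in X$ and let $S=\{i\in N: x_i=1\}$; then $s\ge\sqrt{\sigma+c(S)+\sum_{i\in T}d_iy_i^2}$, so it is enough to prove
\[
\sqrt{\sigma+\sum_{i\in T}d_iy_i^2}+\sum_{i\in S}\rho_{i}\le\sqrt{\sigma+c(S)+\sum_{i\in T}d_iy_i^2}.
\]
Write $\tau=\sigma+\sum_{i\in T}d_iy_i^2$ and enumerate $S$ in the order induced by the permutation, say $S=\{(k_1),\dots,(k_r)\}$ with $k_1<\cdots<k_r$. The right-hand side telescopes as $\sqrt{\tau}+\sum_{j=1}^r\big(\sqrt{\tau+c_{(k_1)}+\cdots+c_{(k_j)}}-\sqrt{\tau+c_{(k_1)}+\cdots+c_{(k_{j-1})}}\big)$, so the claim follows term by term once I show
\[
\rho_{(k_j)}=\sqrt{c_{(k_j)}+\bar\sigma_{(k_j)}}-\sqrt{\bar\sigma_{(k_j)}}\le \sqrt{\tau+c_{(k_1)}+\cdots+c_{(k_j)}}-\sqrt{\tau+c_{(k_1)}+\cdots+c_{(k_{j-1})}}.
\]
For the argument inside the bracket on the left, observe that by the definition \eqref{eq:definitionSigma} of $\bar\sigma_{(k)}$ as a maximum over $(x,y)\in X$ with $x_k=1$, and since our fixed $(x,y)$ has $x_{(k_j)}=1$ and contributes $h_{k_j}(x,y)=\sigma+\sum_{i<k_j,\,x_{(i)}=1}c_{(i)}+\sum_{i\in T}d_iy_i^2=\tau+c_{(k_1)}+\cdots+c_{(k_{j-1})}$, we have $\tau+c_{(k_1)}+\cdots+c_{(k_{j-1})}\le\bar\sigma_{(k_j)}$. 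The desired inequality then reduces to the elementary monotonicity fact that $u\mapsto\sqrt{u+c}-\sqrt{u}$ is non-increasing in $u\ge0$ for fixed $c\ge0$, applied with the two values $u=\tau+c_{(k_1)}+\cdots+c_{(k_{j-1})}\le \bar\sigma_{(k_j)}=u'$ and $c=c_{(k_j)}$. The case $\bar\sigma_{(k_j)}=\infty$ is trivial since $\rho_{(k_j)}=0$.

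Alternatively, and perhaps cleaner, I would phrase the whole thing as: the coefficients $\rho$ are a componentwise lower bound on the polymatroid coefficients $\pi\in\Pi_{\tau}$ for the permutation $((1),\dots,(n))$ whenever $\tau\le\bar\sigma_{(k)}$ at every relevant step, so \eqref{eq:polymatroidConstrained} is implied by \eqref{eq:polymatroidBounded}, which is valid by Proposition~\ref{prop:validityBounded}; but one must be careful because $\tau$ depends on $y$ while $\bar\sigma_{(k)}$ is a fixed number, so the pointwise argument above is safer. I expect the main obstacle to be bookkeeping the indices correctly — matching the partial sums $h_k(x,y)$ that appear in the definition of $\bar\sigma_{(k)}$ with the telescoping decomposition of the square root on the right-hand side, and handling the subtlety that the "$\sigma$" used to generate the polymatroid coefficients must be at least as large as the actual accumulated value at each binary variable that is turned on, which is precisely what the maximization in \eqref{eq:definitionSigma} guarantees. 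Once that alignment is set up, the remaining estimates are just concavity/monotonicity of the square root.
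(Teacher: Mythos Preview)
Your proposal is correct and follows essentially the same approach as the paper: both reduce to the extended formulation with the auxiliary variable $s$, reduce further to the linear inequality $\sqrt{\sigma+\sum_{i\in T}d_iy_i^2}+\sum_{i}\rho_{(i)}x_{(i)}\le s$, and verify it pointwise using that $h_k(x,y)\le\bar\sigma_{(k)}$ together with the concavity/monotonicity of $u\mapsto\sqrt{u+c}-\sqrt{u}$. The only cosmetic difference is that the paper writes the argument as an induction on $k=0,\dots,n$ (handling $x_{(k)}=0$ trivially at each step), whereas you unroll this into an explicit telescoping sum over the support $S$ of $x$; the key estimate at each nontrivial step is identical.
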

\begin{proof}
Let
\begin{equation*}
\label{eq:pOrderConeConstrained}
H_X(T)=\left\{(x,y)\in X, s\geq 0: \sqrt{\sigma+\sum_{i\in N}c_ix_i +\sum_{i\in T} d_i y_i^2}\leq s\right\},
\end{equation*}
and consider the extended formulation of $H_X$ given by
\begin{equation*}
\hat{H}_X=\left\{(x,y,s)\in H_X(T), z\geq 0: \sqrt{s^2 +\sum_{i\in M\setminus T} d_i y_i^2}\leq z\right\}.
\end{equation*}
To prove the validity of \eqref{eq:polymatroidConstrained} for $H_X$, it is sufficient to show that
\begin{equation}
\label{eq:polymatroidConstrainedT}
\sqrt{\sigma+\sum_{i\in T}d_iy_i^2}+\sum_{i=1}^n \rho_{(i)}x_{(i)}\leq s
\end{equation}
is valid for $H_X(T)$. In particular, we prove by induction that
\begin{equation}
\label{eq:inductionStatement}
\sqrt{\sigma+\sum_{i\in T}d_iy_i^2}+\sum_{i=1}^k \rho_{(i)}x_{(i)}\leq \sqrt{\sigma+\sum_{i=1}^k c_{(i)}x_{(i)} +\sum_{i\in T} d_i y_i^2}
\end{equation}
for all $(x,y)\in X$ and $k=0,\ldots,n$.

\paragraph{\textit{Base case: }$k=0$}Inequality \eqref{eq:inductionStatement} holds trivially.

\paragraph{\textit{Inductive step}}Let $(\bar{x},\bar{y})\in X$, and suppose inequality \eqref{eq:inductionStatement} holds for $k-1$. Observe that if $\bar{x}_{(k)}=0$ or $\rho_{(k)}=0$, then inequality \eqref{eq:inductionStatement} clearly holds for $k$. Therefore, assume that $\bar{x}_{(k)}=1$ and $\bar{\sigma}_{(k)}<\infty$. We have
\begin{align}
\sqrt{\sigma+\sum_{i=1}^{k} c_{(i)}\bar{x}_{(i)} +\sum_{i\in T} d_i \bar{y}_i^2}&=\sqrt{h_k(\bar{x},\bar{y})+c_{(k)}}\notag\\
&=\sqrt{h_k(\bar{x},\bar{y})}+\left(\sqrt{h_k(\bar{x},\bar{y})+c_{(k)}}-\sqrt{h_k(\bar{x},\bar{y})}\right)\notag\\
&\geq\sqrt{h_k(\bar{x},\bar{y})}+\left(\sqrt{\bar{\sigma}_{(k)}+c_{(k)}}-\sqrt{\bar{\sigma}_{(k)}}\right)\label{eq:concavity}\\
&\geq \sqrt{\sigma+\sum_{i\in T}d_i\bar{y}_i^2}+\sum_{i=1}^k \rho_{(i)}\bar{x}_{(i)},\label{eq:induction}
\end{align}
where \eqref{eq:concavity} follows from $\bar{\sigma}_{(k)}\geq h_k(\bar{x},\bar{y})$ (by definition of $\bar{\sigma}_{(k)}$) and from the concavity of the square root function, and \eqref{eq:induction} follows from $\sqrt{h_k(\bar{x},\bar{y})}\geq \sqrt{\sigma+\sum_{i\in T}d_i\bar{y}_i^2}+\sum_{i=1}^{k-1} \rho_{(i)}\bar{x}_{(i)}$ (induction hypothesis) and from the definition of $\rho_{(k)}$.
\end{proof}

\begin{remark}
\rev{Note that $\sigma_{(k-1)}=\sigma+\sum_{i=1}^{k-1}c_{(i)}$ and, if $T=\emptyset$, then $$\bar \sigma_{(k)}=\max_{\substack{x\in X\\ x_{(k)}=1}}\sigma+\sum_{i=1}^{k-1}c_{(i)}x_{i}.$$ In particular, if $T=\emptyset$, then $\bar\sigma_{(k)}\leq \sigma_{(k-1)}$ and $\rho_{(k)}\geq \pi_{(k)}$. Thus,}
\label{rem:specialCaseUnbounded}
for $T=\emptyset$ and $X=\rev\D$,
inequalities \eqref{eq:polymatroidConstrained} reduce to inequalities \eqref{eq:polymatroidUnbounded}; for $T=\emptyset$ and $X\subset \rev\D$, \del{then}
inequalities \eqref{eq:polymatroidConstrained} dominate inequalities \eqref{eq:polymatroidUnbounded}.
\end{remark}

\begin{remark}
\label{rem:specialCaseBounded}
For $X=\rev\G$, \del{then}
inequalities \eqref{eq:polymatroidConstrained} reduce to inequalities \eqref{eq:polymatroidBounded}.  
For $X\subset \rev\G$,
inequalities \eqref{eq:polymatroidConstrained} dominate inequalities \eqref{eq:polymatroidBounded}.
\end{remark}

\begin{remark}
For the case of the pure-binary set with defined by a cardinality constraint, i.e., $Y=\left\{x\in \{0,1\}^n:\sum_{i=1}^n x_i\leq k\right\}$ and $\sigma=0$, \cite{Yu2015} give facets for conv$(H_Y)$. However, noting the computation burden of constructing them, they propose approximate lifted inequalities of the form
$\sum_{i\leq k}\pi_{(i)}x_{(i)}+\sum_{i>k}\rho_{(i)}x_\rev{(i)}\leq z$,
where $\pi$ are computed according to \eqref{eq:definitionPi}, and $$\rho_{(i)}=\sqrt{c(T_{(i)})+c_\rev{(i)}}-\sqrt{c(T_{(i)})}$$ 
with $T_{(i)}={\arg\max}\left\{c(T): T\subseteq \{(1),\ldots,(i-1)\},\; |T|=k-1 \right\}.$
Thus, their approximate lifted inequalities coincide with inequalities \eqref{eq:polymatroidConstrained} and can be computed in $O(n\log n)$. If the set $X$ has additional constraints, then inequalities \eqref{eq:polymatroidConstrained} are stronger than the approximate lifted inequalities of \cite{Yu2015}.
\end{remark}

\begin{remark}
The strengthened extended polymatroid inequalities described in this section can be used with rotated cone constraints as well. In particular, for the set $$
R_X=\left\{(x,y)\in X, w\geq 0,z\geq 0:\sigma+ \sum_{i\in N}c_ix_i +\sum_{i\in M} d_i y_i^2\leq 4wz\right\},$$ we find that inequalities 
\begin{equation}
\label{eq:polymatroidRotatedConstrained}
\left(\sqrt{\sigma+\sum_{i\in T}d_iy_i^2}+\sum_{i=1}^n \rho_{(i)}x_{(i)}\right)^2+\sum_{i\in M\setminus T} d_iy_i^2\leq 4 wz
\end{equation} are valid for $R_X$.
\end{remark}

\ignore{

The optimization problem \eqref{eq:definitionSigma} may not be easy to solve exactly.
We propose in Section~\ref{sec:implementationConstrained} efficient ways to compute approximate coefficients.

Observe that the coefficients \eqref{eq:definitionRho} are obtained through a procedure similar to lifting. In particular, as Example \ref{ex:exampleKnapsack} shows, the coefficients \eqref{eq:definitionRho} can be interpreted as lower bounds of the lifting coefficients obtained when a subset of the discrete variables is assumed to be fixed at $0$.

\begin{example}[Lifting with knapsack constraint]
\label{ex:exampleKnapsack}
Given $a\in \R_+^n$ and $r>0$, consider the set
\begin{equation*}
G_C=\left\{(x,z)\in \{0,1\}^n\times \R_+: \sqrt[\uproot{20}p]{\sum_{i=1}^n c_ix_i}\leq z,\; \sum_{i=1}^n a_ix_i \leq r\right\}.
\end{equation*}
Let $G_{C,k}=\left\{(x,z)\in G_C: x_i=0,\; i=k,\ldots,n\right\}$ and suppose the inequality
\begin{equation}
\label{eq:partialInequality}
\sum_{i=1}^{k-1} \rho_i x_i\leq z
\end{equation}
is valid for $G_{C,k}$. Then inequality $\sum\limits_{i=1}^{k-1} \rho_i x_i +\rho_k x_k\leq z$ is valid for $G_{C,k+1}$ if and only if $\rho_k\leq \rho^*$, where
\begin{align}
\rho^*&= \min_{\substack{(x,z)\in G_{C,k+1} \\ x_{(k)}=1}}\left\{z-\sum_{i=1}^{k-1} \rho_{i}x_{i}\right\}\notag\\
&= \min_{x\in \{0,1\}^{k-1}}\left\{\sqrt[\uproot{20}p]{\sum_{i=1}^{k-1}c_{i}x_{i}+c_k}-\sum_{i=1}^{k-1} \rho_{i}x_{i}: \sum_{i=1}^{k-1}a_ix_i\leq r -a_k\right\}.\label{eq:computationExact}
\end{align}
Thus, computing the exact lifting coefficient requires solving a nonlinear discrete optimization problem. Now observe that from the validity of inequality \eqref{eq:partialInequality} we have that
\begin{equation}
\label{eq:partialValidity}
\sum_{i=1}^{k-1} \rho_{i}x_{i}\leq \sqrt[\uproot{20}p]{\sum_{i=1}^{k-1} c_ix_i},
\end{equation}
and we can use inequality \eqref{eq:partialValidity} to find a lower bound for $\rho^*$. In particular,
\begin{align}
\label{eq:lowerBound}
\rho^*&\geq \min_{x\in \{0,1\}^{k-1}}\left\{\sqrt[\uproot{20}p]{\sum_{i=1}^{k-1}c_{i}x_{i}+c_k}-\sqrt[\uproot{20}p]{\sum_{i=1}^{k-1} c_ix_i}: \sum_{i=1}^{k-1}a_ix_i\leq r -a_k\right\}.
\end{align}
By concavity of the root function, optimal solutions to \eqref{eq:lowerBound} correspond to optimal solutions to the optimization problem $$\max_{x\in \{0,1\}^{k-1}}\left\{ \sum_{i=1}^{k-1} c_ix_i: \sum_{i=1}^{k-1}a_ix_i\leq r -a_k\right\},$$
which corresponds to \eqref{eq:definitionSigma}.
 Thus, the coefficient $\rho_{(k)}$ given by \eqref{eq:definitionRho} is a lower bound on the exact lifting coefficient $\rho^*$ obtained by lifting from $0$. Moreover, note that if $a_i\in \Z_+^n$ and $r\in \Z_+$, then all the coefficients \eqref{eq:definitionRho} can be computed in $O(nr)$ using dynamic programming, while it is unclear how to solve the optimization problem \eqref{eq:computationExact}.
\end{example}

Finally, note that lifting requires some knowledge about the set $X$, to determine the variables assumed to be fixed. Moreover, in some cases, it is unclear how to use lifting at all (e.g., path constraints, since fixing variables often results in infeasible lifting problems). On the other hand, when computing coefficients \eqref{eq:definitionRho}, we do not assume that a subset of the variables is fixed. Thus, we can use inequalities \eqref{eq:polymatroidConstrained} for general sets $X$ without requiring any prior knowledge about the set.

}

\subsection{\rev{Relaxed inequalities}}
\label{sec:implementationConstrained}
Note that computing each coefficient of inequality \eqref{eq:polymatroidConstrained} requires solving a non-convex mixed 0-1 optimization problem \eqref{eq:definitionSigma}, which may not be practical in most cases. However, observe from Remarks~\ref{rem:specialCaseUnbounded} and \ref{rem:specialCaseBounded} that solving the optimization problem over \emph{any} relaxation of $X$ that includes the bound constraints results in valid inequalities at least as strong as the ones resulting from using only the bound constraints.

In particular, assume in problem \eqref{eq:definitionSigma} that, for $i\in T$, $y_i$ has a finite upper bound (otherwise the problem is unbounded and $\rho_i=0$) and $u_i=1$ (by scaling). Moreover let $X_P$ be a polyhedron such that $X\subseteq X_P$.
Convex constraints can also be included in $X_P$ by using a suitable linear outer approximation \citep{BenTal2001,Tawarmalani2005,Hijazi2013,Lubin2016}.

Given $X_P$, the approximate coefficients
\begin{align}
\hat{\rho}_{(k)}&=\sqrt{c_{(k)} +\hat{\sigma}_{(k)}}-\sqrt{\hat{\sigma}_{(k)}}\text{, with}\label{eq:definitionRhoHat}\\
\hat{\sigma}_{(k)}&=\sigma+\max\left\{\sum_{i=1}^{k-1}c_{(i)} x_{(i)}+\sum_{i\in T}d_iy_i : (x,y)\in X_P, x_k=1\right\}\notag
\end{align}
can be computed efficiently by solving $n$ linear programs. Moreover, the linear program required to compute $\hat{\sigma}_{(k)}$ differs from the one required for $\hat{\sigma}_{(k-1)}$ in two bound constraints, corresponding to $x_{(k-1)}$ and $x_{(k)}$, and one objective coefficient, corresponding to  $x_{(k-1)}$. Therefore, using the simplex method with warm starts, each $\hat{\sigma}_{(k)}$ can be computed efficiently, using only a small number of simplex pivots.

\rev{
\section{Computational considerations}
\label{sec:compcons}
Table~\ref{tab:taxonomy} presents a classification of the proposed inequalities, depending on whether the continuous variables are bounded or not, on whether the inequalities are for the set with the \rep{conic quadratic}{Lorentz} cone $H_X$ or the rotated cone $R_X$, and on whether additional constraints are used to strengthen the inequalities (\texttt{strengthened}) or not (\texttt{polymatroid}). Note that there is a direct correspondence between the inequalities for \rep{conic quadratic}{Lorentz} cones and for rotated cones and, although not explicitly shown in the paper, it is easy to construct the rotated cone version of inequality \eqref{eq:polymatroidBounded}.
\begin{table}[!h]
	\caption{Classification of the proposed inequalities.}
	\label{tab:taxonomy}
	\begin{center}
		\begin{tabular}{c c c c c}
			\hline
			\multirow{2}{*}{\texttt{Continuous variables}} & \multicolumn{2}{c}{\underline{\texttt{polymatroid}}} &  \multicolumn{2}{c}{\underline{\texttt{strengthened}}}\\
			& \texttt{conic quad} & \texttt{rotated} & \texttt{conic quad} & \texttt{rotated}\\
			\hline
			Unbounded & \eqref{eq:polymatroidUnbounded} & \eqref{eq:polymatroidRotated} &\eqref{eq:polymatroidConstrained}, $T=\emptyset$ & \eqref{eq:polymatroidRotatedConstrained}, $T=\emptyset$\\
			Bounded &\eqref{eq:polymatroidBounded} & & \eqref{eq:polymatroidConstrained} & \eqref{eq:polymatroidRotatedConstrained}\\
			\hline
		\end{tabular}
	\end{center}
\end{table}

We now consider the implementation of the proposed inequalities in branch-and-cut algorithms. First, in Section~\ref{sec:implementationBounded}, we discuss the difficulties in using the inequalities for the (more general) bounded case, then in Section~\ref{sec:implementationUnbounded} we show how to efficiently use the cuts for the unbounded case.

\subsection{Bounded case}
\label{sec:implementationBounded}

For brevity, we only discuss inequalities \eqref{eq:polymatroidBounded} of the form $\varphi(x,y)\leq z$, where
$$\varphi(x,y)=\sqrt{\left(\sqrt{\sigma+\sum_{i\in T}d_iy_i^2}+\pi'x\right)^2+\sum_{i\in M\setminus T}d_iy_i^2}.$$ All \del{the} other inequalities for the bounded case have \ins{a} similar structure, so the discussion extends directly to those cases as well. Inequalities \eqref{eq:polymatroidBounded} are nonlinear, and can be added to the formulation as nonlinear inequalities, or can be implemented via linear cutting planes using outer approximations. Unfortunately, both approaches have drawbacks which may limit the effectiveness of the inequalities in practice when used with current off-the-shelf solvers.

\subsubsection{Implementation as nonlinear cuts}
\label{sec:implementationNonlinearCuts}
The function $\varphi$ is \rep{conic quadratic}{SOCP}-representable; in particular, the inequality $\varphi(x,y)\leq z$ is equivalent to the system 
\begin{align}
 s_1^2&\geq\sigma+\sum_{i\in T}d_iy_i^2\label{eq:conic1}\\
s_2&=s_1+\pi'x\notag\\
z&\geq s_2^2+\sum_{i\in M\setminus T}d_iy_i^2\label{eq:conic2}\\
0&\leq s_1,s_2,\notag
\end{align}
where \eqref{eq:conic1} and \eqref{eq:conic2} are \del{second-order} conic \ins{quadratic} inequalities accepted by most solvers.

Observe that adding each inequality \eqref{eq:polymatroidBounded} requires two additional variables and conic constraints, thus adding even a modest number of inequalities may substantially increase the difficulty of solving the convex relaxations at each node of the branch-and-bound tree. Additionally, solvers rely on the dual simplex method to solve the subproblems arising in branch-and-bound (by constructing a linear approximation of non-polyhedral sets) due to its warm starts capabilities; adding nonlinear cuts such as \eqref{eq:conic1} and \eqref{eq:conic2} may render the existing simplex tableau ineffective and require solving the subproblems from scratch.
 Finally, commercial solvers, currently, do not allow adding nonlinear cuts during branching, and inequalities \eqref{eq:polymatroidBounded} need to be added \rep{by the user at the root node explicitly}{manually}, \rep{giving up}{thus} the benefits of built-in cut-management strategies. 

\subsubsection{Implementation as linear outer approximations}
\label{sec:implementationGradient}
Cutting planes based on a linear outer approximation of \ins{the convex function} $\varphi$ can be added using gradients. Given a fractional solution $(\bar{x},\bar{y})$, the linear underestimator $\bar \varphi(x,y)\leq z$, where $$\bar \varphi(x,y)=\varphi(\bar{x},\bar{y})+\nabla_x \varphi(\bar{x})'(x-\bar{x})+\nabla_y \varphi(\bar{y})'(y-\bar{y})$$
is valid. In particular, we find
\begin{equation*}
\bar \varphi(x,y)=\psi+\frac{1}{\psi}\left(\eta\pi'(x-\bar{x})+\zeta\sum_{i\in T}d_i\bar{y}_i (y_i-\bar{y}_i)+\sum_{i\in M\setminus T}d_i\bar{y}_i(y_i-\bar{y}_i)\right),
\end{equation*}
where
\begin{align*}
\eta=\sqrt{\sigma+\sum_{i\in T}d_i\bar{y}_i^2}+\pi'\bar{x}; \ \ 
\zeta=\frac{\eta}{\sqrt{\sigma+\sum_{i\in T}d_i\bar{y}_i^2}}; \ \
\psi=\sqrt{\eta^2 +\sum_{i\in M\setminus T}d_i\bar{y}_i^2}.
\end{align*}
An implementation based on the linear cuts $\bar\varphi(x,y)\leq z$ leverages the existing capabilities of current commercial solvers, including warm starts and cut management strategies. Nevertheless, each linear inequality $\bar\varphi(x,y)\leq z$ is often weak, and constructing a suitable approximation of the original nonlinear inequality $\varphi(x,y)\leq z$ may require a prohibitive number of cuts. 

In Appendix~\ref{sec:computationalBounded} we provide a comparison of both approaches for a simple mean-risk minimization problem with bounded continuous variables and no correlations. Adding the nonlinear inequalities directly, as discussed in Section~\ref{sec:implementationNonlinearCuts}, results in significantly better performance, both in terms of the relaxation quality and the solution times. These results are consistent with the recent experience by the authors using other classes of nonlinear inequalities, see \cite{atamturk2018strong} and \cite{gomez2018strong}.

\subsection{Unbounded case}
\label{sec:implementationUnbounded}
In most of the applications discussed in Section~\ref{sec:applications}, the continuous variables are used to model covariance terms, rotated cone constraints or denominators in fractional optimization. In such cases, the continuous variables are unbounded, and the proposed inequalities can \del{actually} be implemented efficiently in such settings. Observe that the conic quadratic inequality arising in set $H_X$ can be written in an extended formulation as 
\begin{align*}
s^2&\geq \sum_{i\in N} c_ix_i^2\\
z^2&\geq s^2+\sum_{i\in M}d_iy_i^2\\
0&\leq s.
\end{align*}
Similarly, the rotated cone inequality arising in set $R_X$ can be written as 
\begin{align*}
s^2&\geq \sum_{i\in N} c_ix_i^2 \\
t^2&\geq s^2+\sum_{i\in M}d_iy_i^2 \\
t^2&\leq wz\\
0&\leq s,t.
\end{align*}
In both cases, the \texttt{polymatroid} and \texttt{strengthened} inequalities can be added as linear cuts, $\pi'x\leq s$ and $\rho'x\leq s$, respectively. Thus, \rep{when}{by} adding the nonlinear inequalities as linear cuts in an extended formulation, \rep{optimization}{the resulting} algorithms benefit from the warm starts and cut management strategies without sacrificing \rep{the strength of the inequalities}{any strength}. Such \ins{a} formulation cannot be \rep{used effectively}{replicated} for the bounded case, since an additional variable would \rep{be needed}{need to be added} for \rep{each subset $T$}{every subset} of $M$.
}

\section{Experiments}
\label{sec:computational}

In this section we report computational experiments performed to test the effectiveness of the polymatroid inequalities in solving second order cone optimization with a branch-and-cut algorithm.  
In Section~\ref{sec:computationalNondiagonal} we solve instances with general covariance matrices (see application in Section \ref{sec:correlated}), in Section~\ref{sec:robustComputations} we solve \rev{conic quadratic interdiction problems} (see application in Section \ref{sec:robust}), and in Section~\ref{sec:fractionaComputations} we solve binary linear fractional problems (see applications in Section~\ref{sec:fractional}).

All experiments are done using CPLEX 12.6.2 solver on a workstation with a 2.93GHz Intel\textregistered Core\textsuperscript{TM} i7 CPU and 8 GB main memory and with a single thread. \rev{We compare using default CPLEX without adding any cuts (\texttt{cpx}), using the inequalities \del{proposed} in Section~\ref{sec:unbounded} (\texttt{polymatroid}) and using the strengthened inequalities \del{proposed} in Section~\ref{sec:constrained} (\texttt{strengthened}). Since in all cases the continuous variables are unbounded, we implement the inequalities as discussed in Section~\ref{sec:implementationUnbounded}.} The time limit is set to two hours and CPLEX' default settings are used.
The inequalities are added only at the root node using callback functions, and all times reported include the time required to add cuts.  
\ignore{
\begin{description}
\item[None] No cutting planes are added.
\item[Poly1] Inequalities \eqref{eq:polymatroidUnbounded} are added as cutting planes. We use an extended formulation and add the inequalities as linear cuts as described in Remark~\ref{rem:extended}.
\item[Poly2] Inequalities \eqref{eq:polymatroidConstrained} are added as cutting planes. The specific implementation depends on the instance.
\end{description}
}

\ignore{

\subsection{Instances with a cardinality constraint}
\label{sec:computationalCardinality}

In this section we test the value of strengthening the polymatroid inequalities utilizing additional problem constraints. To do so, we solve optimization problems with a cardinality constraint: \begin{equation}
\label{eq:computationalB}
\min_{x\in \{0,1\}^n}\left\{-a'x+\Omega \sqrt{c'x} : \sum_{i= 1}^n x_i\leq k\right\},
\end{equation}
where $a$ and $c$ are generated as in Section~\ref{sec:computationalBounded} and $\Omega=\Phi^{-1}(\alpha)$, where $\Phi$ is the cumulative distribution function of the normal distribution and $\alpha\in \{0.95,0.975,0.99\}$. We set $n=200$, and set $k$ to be 15\%, 20\% and 25\% of the total number of variables.
Inequalities \eqref{eq:extendedPolymatroidInequality} and \eqref{eq:polymatroidConstrained} are compared with default CPLEX. The coefficients of inequalities \eqref{eq:polymatroidConstrained} are computed using linear programming with warm starts as outlined in Section~\ref{sec:implementationConstrained}---observe that, in this case, the coefficients \eqref{eq:definitionRhoHat} coincide with \eqref{eq:definitionRho} since the feasible region is an integral polytope.

 \ignore{ 
 the initial gap, and for each type of valid inequalities: the initial gap, the root improvement, the number of branch-and-bound nodes, the time used in seconds (T), the end gap (E) and number of instances solved to optimality([\#]). Each row represents the average over five instances generated with the same parameters.
}

{
\renewcommand\arraystretch{0.8}
\begin{table}[h!]
\caption{Experiments with cardinality constraints.}
\setlength{\tabcolsep}{2pt} 
\begin{center}
\label{tab:card0}
\SingleSpacedXI
\scalebox{0.7}{
\begin{tabular}{ c c c |c  c c c| c c c c| c c c c}
\hline \hline
\multirow{2}{*}{$k$} & \multirow{2}{*}{$\alpha$} & \multirow{2}{*}{\textbf{\texttt{igap}}} & \multicolumn{4}{c|}{\textbf{\texttt{cpx}}} & \multicolumn{4}{c|}{\textbf{\texttt{inequality \eqref{eq:extendedPolymatroidInequality}}}}& \multicolumn{4}{c}{\textbf{\texttt{inequality \eqref{eq:polymatroidConstrained}}}}\\
&&&\texttt{rimp}&\texttt{nodes}&\texttt{time}&\texttt{egap}[\#]&\texttt{rimp}&\texttt{nodes}&\texttt{time}&\texttt{egap}[\#]&\texttt{rimp}&\texttt{nodes}&\texttt{time}&\texttt{egap}[\#]\\
\midrule
\multirow{3}{*}{30}& 0.95 & 4.4 & 23.7 & 7,150,715 & 2,528 & 0.3[4] & 36.6 & 3,754,826 & 2,073 & 0.4[4] & 48.9 & 2,614,446 & 1,510 & 0.2[4]\\
& 0.975 & 7.2 & 7.2 & 13,632,197 & 6,120 & 1.8[1] & 23.7 & 9,573,199 & 5,945 & 1.8[1] & 39.8 & 9,235,158 & 5,797 & 1.0[1]\\
& 0.99 & 11.9 & 4.0 & 16,867,459 & 7,200 & 5.0[0] & 14.7 & 10,899,169 & 7,200 & 5.7[0] & 31.6 & 13,328,370 & 7,200 & 4.1[0]\\
\multicolumn{3}{c|}{\textbf{Average}}&\textbf{11.6} &\textbf{ 12,550,124} & \textbf{5,283} & \textbf{2.4[5]} & \textbf{25.0} & \textbf{8,075,731} & \textbf{5,073} & \textbf{2.6[5]} & \textbf{40.1} & \textbf{8,392,658} & \textbf{4,836} & \textbf{1.8[5]}\\
\midrule
&&&&&&&&&&
\\
\multirow{3}{*}{40}& 0.95 & 1.9 & 20.7 & 6,235,270 & 1,674 & 0.1[4] & 70.5 & 620,389 & 261 & 0.0[5] & 75.0 & 90,179 & 62 & 0.0[5]\\
& 0.975 & 3.3 & 9.6 & 13,961,488 & 4,360 & 0.4[3] & 49.2 & 3,268,824 & 2,122 & 0.2[4] & 57.3 & 2,729,459 & 1,557 & 0.2[4]\\
& 0.99 & 5.6 & 6.0 & 15,334,782 & 6,738 & 1.8[1] & 30.0 & 6,110,571 & 6,149 & 1.7[1] & 42.6 & 5,222,829 & 5,799 & 1.2[1]\\
\multicolumn{3}{c|}{\textbf{Average}}&\textbf{12.1} &\textbf{ 11,843,847} & \textbf{4,257} & \textbf{0.8[8]} & \textbf{49.9} & \textbf{3,333,261} & \textbf{2,844} & \textbf{0.6[10]} & \textbf{58.3} & \textbf{2,680,821} & \textbf{2,472} & \textbf{0.5[10]}\\
\midrule
&&&&&&&&&&
\\
\multirow{3}{*}{50}& 0.95 & 1.0 & 8.9 & 270,852 & 72 & 0.0[5] & 93.3 & 249 & 2 & 0.0[5] & 93.3 & 98 & 2 & 0.0[5]\\
& 0.975 & 1.6 & 8.0 & 3,882,494 & 1,045 & 0.0[5] & 81.3 & 316,625 & 221 & 0.0[5] & 84.4 & 198,916 & 92 & 0.0[5]\\
& 0.99 & 2.8 & 7.9 & 14,835,539 & 4,600 & 0.3[3] & 57.3 & 4,695,268 & 3,480 & 0.2[3] & 64.3 & 983,894 & 1,537 & 0.2[4]\\
\multicolumn{3}{c|}{\textbf{Average}}&\textbf{8.3} &\textbf{ 6,329,628} & \textbf{1,906} & \textbf{0.1[13]} & \textbf{77.3} & \textbf{1,670,714} & \textbf{1,234} & \textbf{0.1[13]} & \textbf{80.7} & \textbf{394,293} & \textbf{544} & \textbf{0.1[14]}\\

\hline\hline
\end{tabular}
}
\end{center}
\end{table}
} 

Table~\ref{tab:card0} presents the results for each value of $k$ and $\alpha$.
We see that for instances with $k=50$, using inequalities \eqref{eq:extendedPolymatroidInequality} or \eqref{eq:polymatroidConstrained} results in gap improvement of more than 75\% and faster solutions times than default CPLEX. In particular, using inequalities \eqref{eq:polymatroidConstrained} results in solutions times that are four times faster than default CPLEX on average.
As expected, for instances with tighter cardinality constraints,  inequalities \eqref{eq:polymatroidConstrained}, which exploit the cardinality constraints, are more effective than inequalities \eqref{eq:extendedPolymatroidInequality} in reducing the solution times as well as end gaps. On the other hand, when the cardinality constraint is loose, the effectiveness of both classes of inequalities improve.

\ignore{
 are less effective. On the other hand using inequalities \eqref{eq:polymatroidConstrained}, which exploits the cardinality constraint, results in an improvement over default CPLEX, with better solution times and end gaps in all cases. Observe that the additional root gap improvement achieved by inequalities \eqref{eq:polymatroidConstrained} with respect to using inequalities \eqref{eq:extendedPolymatroidInequality} is greater in instances with tight cardinality constraints. Indeed, in such instances, the stronger coefficients \eqref{eq:definitionRho} are a larger improvement over coefficients \eqref{eq:definitionPi}.
}

}

\subsection{Mean-risk minimization with correlated random variables}
\label{sec:computationalNondiagonal}

In this section we test the effectiveness \rev{of} the polymatroid inequalities in instances with correlated random variables.
In particular, we solve mean-risk \rev{minimization} problems
\begin{equation}
\label{eq:computationalC}
\min_{x\in \{0,1\}^n}\left\{-a'x +\Omega \sqrt{x'Qx}: \sum_{i=1}^nx_i\leq k\right\},
\end{equation}
where the matrix $Q$ is generated according to a factor model, i.e., $Q=\rev ZF\rev Z'+D$ where $F\in \R^{r\times r}$ is the factor covariance matrix, $\rev Z\in \R^{n\times r}$ is the exposure matrix and $D\in \R^{n\times n}$ is diagonal matrix with the specific covariances. Observe that in such instances, we can set $\diag(c)=D$ in equation \eqref{eq:socpConstraint}. 

 In our experiments $F=GG'$, with $G\in \R^{r\times r}$ and $G_{ij}\sim U[-1,1]$, $\rev Z_{ij}\sim U[0,1]$ with probability $0.2$ and $\rev Z_{ij}=0$ otherwise, $D_{ii}\sim U[0,\delta \bar{q}]$, where $\delta\geq 0$ is a diagonal dominance parameter and $\bar{q}=\frac{1}{N}\sum_{i\in N}{Q_0}_{ii}$, and $a_i\sim U[0.85\sqrt{Q_{ii}},1.15\sqrt{Q_{ii}}]$. \rev{We set} the parameter $\Omega=\Phi^{-1}(\alpha)$, where $\Phi$ is the cumulative distribution function of the normal distribution \rev{and $\alpha\in\{0.95,0.975,0.99\}$}. 
  We let $n=200$, $r=40$ and $k$ equal to 10\%, 15\%, and 20\% of the number of the  variables.


{
\renewcommand\arraystretch{0.7}
\begin{table}[h!]
\caption{Experiments with general covariance matrices ($\delta=0.5$).}
\setlength{\tabcolsep}{2pt}
\begin{center}
\label{tab:card05delta}
\SingleSpacedXI
\scalebox{0.7}{
\begin{tabular}{ c c c |c  c c c| c c c c| c c c c}
\hline \hline
\multirow{2}{*}{$k$} & \multirow{2}{*}{$\alpha$} & \multirow{2}{*}{\textbf{\texttt{igap}}} & \multicolumn{4}{c|}{\textbf{\texttt{cpx}}} & \multicolumn{4}{c|}{\textbf{\texttt{\rev{polymatroid}}}}& \multicolumn{4}{c}{\textbf{\texttt{\rev{strengthened}}}}\\
&&&\texttt{rimp}&\texttt{nodes}&\texttt{time}&\texttt{egap}[\#]&\texttt{rimp}&\texttt{nodes}&\texttt{time}&\texttt{egap}[\#]&\texttt{rimp}&\texttt{nodes}&\texttt{time}&\texttt{egap}[\#]\\
\hline
\multirow{3}{*}{20}& 0.95 & 1.7 & 22.6 & 9,557 & 74 & 0.0[5] & 53.3 & 3,957 & 23 & 0.0[5] & 55.6 & 2,367 & 17 & 0.0[5]\\
& 0.975 & 3.0 & 21.3 & 33,468 & 242 & 0.0[5] & 53.5 & 13,316 & 86 & 0.0[5] & 55.9 & 5,839 & 40 & 0.0[5]\\
& 0.99 & 5.2 & 15.2 & 164,568 & 1,845 & 0.0[5] & 52.8 & 80,735 & 730 & 0.0[5] & 55.3 & 23,577 & 269 & 0.0[5]\\
\multicolumn{3}{c|}{\textbf{Average}}&\textbf{19.7} &\textbf{ 69,198} & \textbf{720} & \textbf{0.0[15]} & \textbf{53.2} & \textbf{32,669} & \textbf{280} & \textbf{0.0[15]} & \textbf{55.6} & \textbf{10,594} & \textbf{109} & \textbf{0.0[15]}\\
\hline
&&&&&&&&&&
\\
\multirow{3}{*}{30}& 0.95 & 0.8 & 15.5 & 7,115 & 57 & 0.0[5] & 53.3 & 1,656 & 11 & 0.0[5] & 52.4 & 1,159 & 9 & 0.0[5]\\
& 0.975 & 1.3 & 14.9 & 18,901 & 135 & 0.0[5] & 53.1 & 2,800 & 20 & 0.0[5] & 54.0 & 2,095 & 15 & 0.0[5]\\
& 0.99 & 2.3 & 5.7 & 76,675 & 1,005 & 0.0[5] & 61.1 & 8,265 & 48 & 0.0[5] & 62.1 & 5,131 & 30 & 0.0[5]\\
\multicolumn{3}{c|}{\textbf{Average}}&\textbf{12.0} &\textbf{ 34,230} & \textbf{399} & \textbf{0.0[15]} & \textbf{55.8} & \textbf{4,240} & \textbf{26} & \textbf{0.0[15]} & \textbf{56.2} & \textbf{2,795} & \textbf{18} & \textbf{0.0[15]}\\
\hline
&&&&&&&&&&
\\
\multirow{3}{*}{40}& 0.95 & 0.4 & 23.3 & 2,910 & 18 & 0.0[5] & 48.5 & 611 & 6 & 0.0[5] & 50.5 & 577 & 6 & 0.0[5]\\
& 0.975 & 0.7 & 20.0 & 4,216 & 30 & 0.0[5] & 54.3 & 884 & 7 & 0.0[5] & 55.5 & 839 & 7 & 0.0[5]\\
& 0.99 & 1.1 & 13.5 & 46,030 & 514 & 0.0[5] & 55.9 & 2,493 & 18 & 0.0[5] & 56.7 & 2,144 & 14 & 0.0[5]\\
\multicolumn{3}{c|}{\textbf{Average}}&\textbf{18.9} &\textbf{ 17,719} & \textbf{187} & \textbf{0.0[15]} & \textbf{52.9} & \textbf{1,329} & \textbf{10} & \textbf{0.0[15]} & \textbf{54.2} & \textbf{1,187} & \textbf{9} & \textbf{0.0[15]}\\
\hline\hline
\end{tabular}
}
\end{center}
\end{table}
} 
{
\renewcommand\arraystretch{0.7}
\begin{table}[h!]
\caption{Experiments with general covariance matrices ($\delta=1.0$).}
\setlength{\tabcolsep}{2pt}
\begin{center}
\label{tab:card10delta}
\SingleSpacedXI
\scalebox{0.7}{
\begin{tabular}{ c c c |c  c c c| c c c c| c c c c}
\hline \hline
\multirow{2}{*}{$k$} & \multirow{2}{*}{$\alpha$} & \multirow{2}{*}{\textbf{\texttt{igap}}} & \multicolumn{4}{c|}{\textbf{\texttt{cpx}}} & \multicolumn{4}{c|}{\textbf{\texttt{\rev{polymatroid}}}}& \multicolumn{4}{c}{\textbf{\texttt{\rev{strengthened}}}}\\
&&&\texttt{rimp}&\texttt{nodes}&\texttt{time}&\texttt{egap}[\#]&\texttt{rimp}&\texttt{nodes}&\texttt{time}&\texttt{egap}[\#]&\texttt{rimp}&\texttt{nodes}&\texttt{time}&\texttt{egap}[\#]\\
\hline
\multirow{3}{*}{20}& 0.95 & 2.9 & 21.6 & 64,283 & 927 & 0.0[5] & 55.1 & 14,984 & 165 & 0.0[5] & 59.1 & 6,233 & 68 & 0.0[5]\\
& 0.975 & 5.0 & 15.5 & 240,224 & 3,975 & 0.4[3] & 44.4 & 189,826 & 3,390 & 0.4[3] & 50.9 & 102,053 & 1,915 & 0.1[4]\\
& 0.99 & 9.0 & 6.4 & 378,116 & 7,200 & 2.2[0] & 35.7 & 477,553 & 7,200 & 1.9[0] & 43.1 & 430,707 & 5,966 & 0.6[2]\\
\multicolumn{3}{c|}{\textbf{Average}}&\textbf{14.5} &\textbf{ 227,541} & \textbf{4,034} & \textbf{0.9[8]} & \textbf{45.1} & \textbf{227,454} & \textbf{3,585} & \textbf{0.8[8]} & \textbf{51.0} & \textbf{179,664} & \textbf{2,650} & \textbf{0.2[11]}\\
\hline
&&&&&&&&&&
\\
\multirow{3}{*}{30}& 0.95 & 1.1 & 17.1 & 32,629 & 316 & 0.0[5] & 77.2 & 1,082 & 12 & 0.0[5] & 78.2 & 682 & 10 & 0.0[5]\\
& 0.975 & 2.0 & 12.5 & 150,756 & 2,046 & 0.1[4] & 72.9 & 12,202 & 107 & 0.0[5] & 75.5 & 4,896 & 39 & 0.0[5]\\
& 0.99 & 3.5 & 10.5 & 258,866 & 3,679 & 0.5[3] & 67.8 & 115,507 & 1,510 & 0.1[4] & 70.6 & 59,106 & 511 & 0.0[5]\\
\multicolumn{3}{c|}{\textbf{Average}}&\textbf{13.4} &\textbf{ 147,417} & \textbf{2,014} & \textbf{0.2[12]} & \textbf{72.6} & \textbf{42,930} & \textbf{543} & \textbf{0.0[14]} & \textbf{74.8} & \textbf{21,561} & \textbf{187} & \textbf{0.0[15]}\\
\hline
&&&&&&&&&&
\\
\multirow{3}{*}{40}& 0.95 & 0.6 & 23.9 & 6,522 & 64 & 0.0[5] & 72.3 & 270 & 9 & 0.0[5] & 74.8 & 192 & 8 & 0.0[5]\\
& 0.975 & 1.0 & 24.0 & 31,022 & 414 & 0.0[5] & 71.0 & 823 & 12 & 0.0[5] & 72.1 & 695 & 11 & 0.0[5]\\
& 0.99 & 1.6 & 17.6 & 122,568 & 2,907 & 0.2[3] & 73.9 & 4,416 & 37 & 0.0[5] & 75.1 & 2,543 & 26 & 0.0[5]\\
\multicolumn{3}{c|}{\textbf{Average}}&\textbf{21.8} &\textbf{ 53,371} & \textbf{1,128} & \textbf{0.1[13]} & \textbf{72.4} & \textbf{1,836} & \textbf{19} & \textbf{0.0[15]} & \textbf{74.0} & \textbf{1,143} & \textbf{15} & \textbf{0.0[15]}\\

\hline\hline
\end{tabular}
}
\end{center}
\end{table}
} 
\begin{figure}[h!]
	\centering
	\begin{subfigure}{1.0\textwidth}
		\centering
		\includegraphics[width=0.9\textwidth,trim={9cm 5.5cm 9cm 5.5cm},clip]{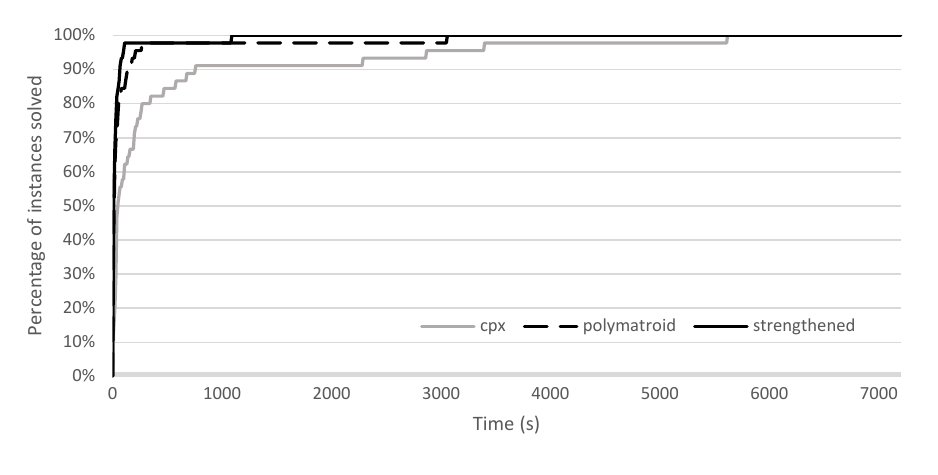}
		\caption{$\delta=0.5$.}
	\end{subfigure}
	
	\begin{subfigure}{1.0\textwidth}
		\centering
		\includegraphics[width=0.9\textwidth,trim={9cm 5.5cm 9cm 5.5cm},clip]{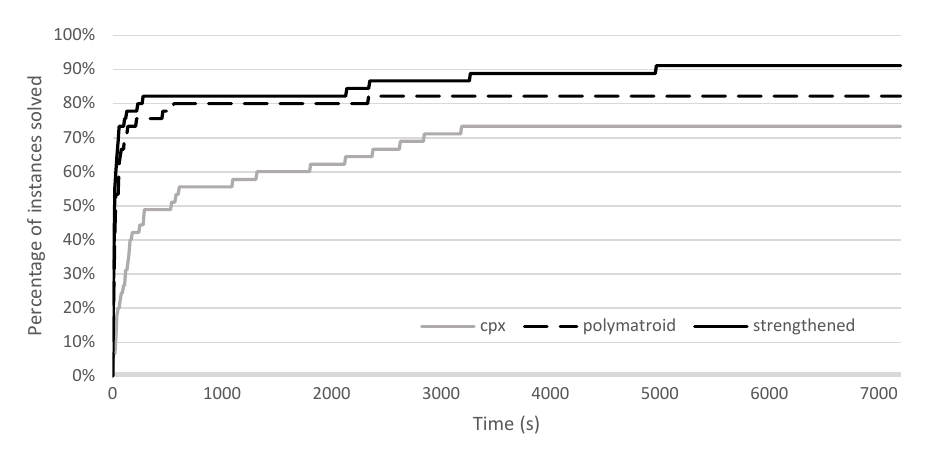}
		\caption{$\delta=1.0$.}
	\end{subfigure}
	\caption{Percentage of instances solved within a given time limit for mean-risk minimization with correlated random variables.}
	\label{fig:profileNondiag}
\end{figure}

Tables~\ref{tab:card05delta} and \ref{tab:card10delta} present the results for different \ins{values} of the diagonal dominance parameter
$\delta$. \rev{Each row represents the average over five instances generated with the same parameters and shows the initial gap (\texttt{igap}), the root gap improvement (\texttt{rimp}), the number of nodes explored (\texttt{nodes}), the time elapsed in seconds (\texttt{time}), and the end gap (\texttt{egap})[in brackets, the number of instances solved to optimality (\#)]. The initial gap is computed as $\texttt{igap}=\frac{t_{\text{opt}}-t_{\text{relax}}}{\left|t_{\text{opt}}\right|}\times 100$, where $t_{\text{opt}}$ is the objective value of the best feasible solution at termination and $t_{\text{relax}}$ is the objective value of the continuous relaxation. The end gap is computed as $\texttt{egap}=\frac{{t_\text{opt}}-t_{\text{bb}}}{\left|t_{\text{opt}}\right|}\times 100$, where $t_{\text{bb}}$ is the objective value of the best lower bound at termination.
	The root improvement is computed as $\texttt{rimp}=\frac{t_{\text{root}}-t_{\text{relax}}}{t_{\text{opt}}-t_{\text{relax}}}\times 100$, where $t_{\text{root}}$ is the value of the continuous relaxation after adding the valid inequalities to the formulation. Figure~\ref{fig:profileNondiag} shows the corresponding performance profiles.}

Observe that adding inequalities \rev{\texttt{polymatroid}} or \rev{\texttt{strengthened}} closes the initial integrality gaps by 45\% to 75\%, resulting in significant performance improvement over default CPLEX. In particular, using inequalities  \rev{\texttt{strengthened}}
for instances with $k=20$ leads to seven times speed-up with $\delta=0.5$ and two times speed-up with $\delta=1$) and lower end gaps. Moreover, for instances with $k\geq 30$ using inequalities  \rev{\texttt{strengthened}} results in at least an order-of-magnitude speed-up over default CPLEX.
The impact of both inequalities increases with higher diagonal dominance \rev{as expected}. \rev{In \del{particular, from} Figure~\ref{fig:profileNondiag} we see that for $\delta=1.0$ \texttt{cpx} requires close to 3,000 seconds to solve 70\% of the instances, while \texttt{polymatroid} requires 110 seconds and \texttt{strengthened} requires 50 seconds to solve a similar \rep{number}{quantity} of instances, i.e., \texttt{strengthened} is 50 times faster than \texttt{cpx}; in fact, \texttt{strengthened} solves in 60 seconds 73\% of the instances, the same quantity that \texttt{cpx} solves in 2 hours. Finally, we see that the \texttt{strengthened}  inequalities result in consistently better performance than the simpler \texttt{polymatroid} inequalities .}

\subsection{\rev{Conic quadratic interdiction} instances}
\label{sec:robustComputations}

In this section we test the effectiveness of the proposed inequalities \rev{for} the \rev{interdiction} problem \eqref{eq:robustConicQuadraticProblem} \rev{discussed in Section~\ref{sec:robust}}.
In our computations, we model a decision-maker that seeks a path with minimal \rev{value-at-risk}. \rev{After the decision-maker decides on a path, an adversary may attack a limited number of \del{times some} arcs on the path, increasing the expectation and/or covariance of travel times/costs.}

The feasible region $X$ is given by path constraints on a $40\times 40$ grid network. There is a potential adverse event corresponding to each arc, and each event results in an increase in the nominal duration\rev{/cost} and variance of that arc: in particular, for $i=1,\ldots,n$, $a_i\sim U[0,2]e^i$, where $e^i$ is the vector which has value $1$ in the $i$-th position and $0$ elsewhere, and the $i$-th row and column 
of $Q_i$ is drawn from $U[0,2]$ and $Q_i$ has 0 entries elsewhere. 
Each element of the nominal cost vector $a_0$ is drawn from $U[0,1]$, and the squared roots of every diagonal element of $Q_0$ are also generated from $U[0,1]$. The parameter $\Omega$ is set as in Section~\ref{sec:computationalNondiagonal}.

Table~\ref{tab:robustPath} shows the results for different values of $\alpha$ and the parameter controlling the \rev{number of attacks $\Gamma$}\rev{, and Figure~\ref{fig:profileInterdiction} shows the corresponding performance profile}. Observe that the \rev{\texttt{strengthened}} cuts  result in a better root improvement of 55\% -- compared to 30--37\% achieved by default CPLEX.
Moreover, when using the \texttt{strengthened} inequalities, 37 instances are solved to optimality, while default CPLEX is able to solve only 22 instances. We also see that in these path instances, the \rev{\texttt{polymatroid}} inequalities result in longer solution times than \texttt{cpx} (despite better root improvements). On the other hand, the \texttt{strengthened} inequalities are effective both 
in reducing the integrality gaps and solution times.

{
\renewcommand\arraystretch{0.7}
\begin{table}[h!]
\caption{Experiments with robust conic instances.}
\setlength{\tabcolsep}{2pt}
\begin{center}
\label{tab:robustPath}
\SingleSpacedXI
\scalebox{0.7}{
\begin{tabular}{ c c c |c  c c c| c c c c| c c c c}
\hline \hline
\multirow{2}{*}{$\rev{\Gamma}$} & \multirow{2}{*}{$\alpha$} & \multirow{2}{*}{\textbf{\texttt{igap}}} & \multicolumn{4}{c|}{\textbf{\texttt{cpx}}} & \multicolumn{4}{c|}{\textbf{\texttt{\rev{polymatroid}}}}& \multicolumn{4}{c}{\textbf{\texttt{\rev{strengthened}}}}\\
&&&\texttt{rimp}&\texttt{nodes}&\texttt{time}&\texttt{egap}[\#]&\texttt{rimp}&\texttt{nodes}&\texttt{time}&\texttt{egap}[\#]&\texttt{rimp}&\texttt{nodes}&\texttt{time}&\texttt{egap}[\#]\\
\hline
\multirow{3}{*}{4}& 0.95 & 22.6 & 35.1 & 65,533 & 3,124 & 0.6[4] & 44.3 & 72,322 & 5,220 & 1.2[3] & 56.8 & 17,057 & 917 & 0.0[5]\\
& 0.975 & 24.1 & 30.2 & 95,337 & 4,239 & 0.8[4] & 41.1 & 87,697 & 7,200 & 3.5[0] & 55.2 & 53,022 & 2,648 & 0.0[5]\\
& 0.99 & 25.7 & 26.6 & 153,481 & 7,200 & 2.2[0] & 37.9 & 80,160 & 7,200 & 7.6[0] & 53.5 & 102,578 & 4,452 & 0.0[5]\\
\multicolumn{3}{c|}{\textbf{Average}}&\textbf{30.6} &\textbf{ 104,117} & \textbf{4,854} & \textbf{1.2[8]} & \textbf{41.1} & \textbf{80,060} & \textbf{6,540} & \textbf{4.1[3]} & \textbf{55.2} & \textbf{57,552} & \textbf{2,672} & \textbf{0.0[15]}\\
\hline
&&&&&&&&&&
\\
\multirow{3}{*}{6}& 0.95 & 26.9 & 38.8 & 73,898 & 3,422 & 0.4[4] & 45.0 & 89,319 & 5,771 & 2.0[3] & 56.2 & 33,364 & 1,644 & 0.0[5]\\
& 0.975 & 28.1 & 34.6 & 138,231 & 5,676 & 1.8[2] & 41.3 & 96,917 & 7,200 & 5.5[0] & 54.1 & 113,745 & 4,895 & 0.0[5]\\
& 0.99 & 29.7 & 32.0 & 160,074 & 6,823 & 4.2[1] & 38.9 & 94,762 & 7,200 & 7.2[0] & 52.2 & 113,954 & 6,091 & 2.0[1]\\
\multicolumn{3}{c|}{\textbf{Average}}&\textbf{35.1} &\textbf{ 124,068} & \textbf{5,307} & \textbf{2.1[7]} & \textbf{41.7} & \textbf{93,666} & \textbf{6,704} & \textbf{4.9[2]} & \textbf{54.2} & \textbf{87,021} & \textbf{4,210} & \textbf{0.7[11]}\\
\hline
&&&&&&&&&&
\\
\multirow{3}{*}{8}& 0.95 & 30.2 & 40.9 & 143,946 & 5,474 & 0.8[4] & 46.4 & 112,279 & 6,822 & 1.6[1] & 55.2 & 53,942 & 2,234 & 0.0[5]\\
& 0.975 & 31.3 & 36.3 & 145,582 & 5,967 & 1.9[2] & 42.7 & 107,432 & 7,200 & 4.8[0] & 53.4 & 99,904 & 4,679 & 0.4[4]\\
& 0.99 & 32.7 & 34.2 & 123,325 & 6,512 & 3.5[1] & 39.5 & 94,691 & 7,200 & 8.2[0] & 51.1 & 136,632 & 6,162 & 2.4[2]\\
\multicolumn{3}{c|}{\textbf{Average}}&\textbf{37.1} &\textbf{ 137,618} & \textbf{5,984} & \textbf{2.1[7]} & \textbf{42.8} & \textbf{104,801} & \textbf{7,055} & \textbf{4.9[1]} & \textbf{53.2} & \textbf{96,826} & \textbf{4,358} & \textbf{0.9[11]}\\
\hline\hline
\end{tabular}
}
\end{center}
\end{table}
} 

	\begin{figure}[!h]
	\centering
	\includegraphics[width=0.9\textwidth,trim={9cm 5.5cm 9cm 5.5cm},clip]{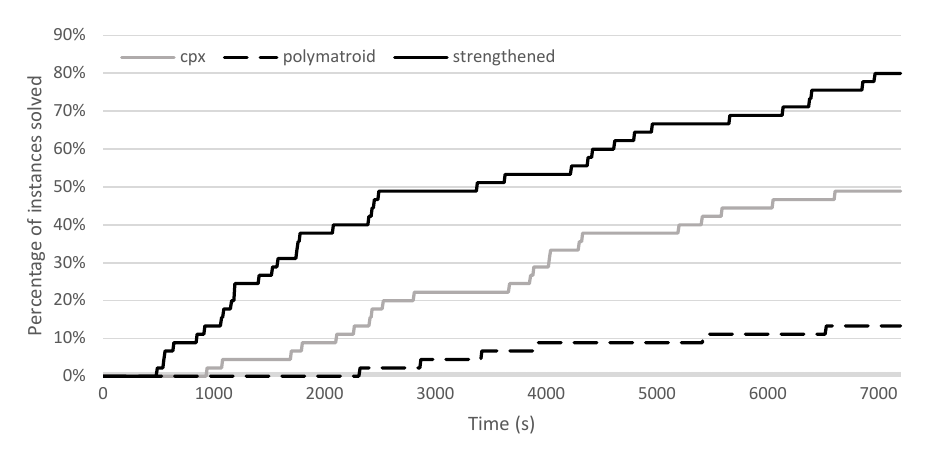}
	\caption{Percentage of instances solved within a given time limit for interdiction problems.}
	\label{fig:profileInterdiction}
\end{figure}
\ignore{
Additionally, Figure~\ref{fig:robustFigure} shows, for different values of the robustness parameter $b$, the objective value of the approximate robust counterpart $\omega_a$ and the lower bound on the exact robust problem $\omega_\ell$, defined in \eqref{eq:defOmega} and \eqref{eq:defLB}, respectively. The figure also shows the nominal objective value corresponding to the robust solution $\bar \omega_n$, i.e., $\bar \omega_n=a_0'\bar{x} +\sqrt{\bar x Q_0 \bar x}$ where $\bar x$ is an optimal solution of (RC).  We see that the nominal values $\bar \omega_n$, corresponding to the objective value if none of the events are realized, are not substantially affected by changes in $b$.
The computations suggest that the approximate robust formulation (RC) protects against adverse scenarios without leading to overly conservative solutions.

\begin{figure}[h!]
	\label{fig:robustFigure}
	\includegraphics[scale = 0.9, trim={8cm 5cm 8cm 4cm},clip,width=0.75\textwidth]{./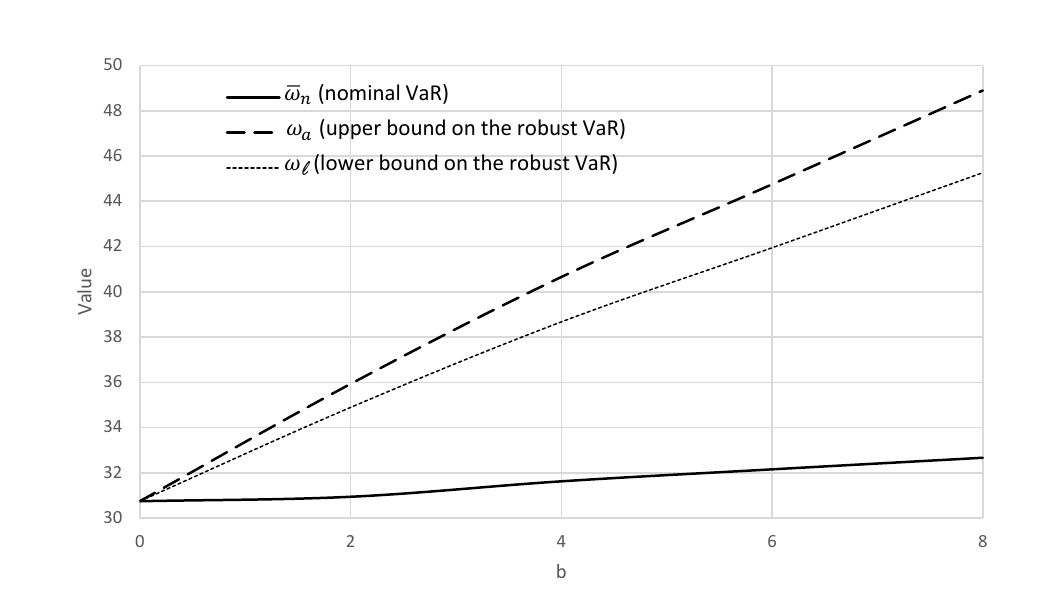}
		\caption{Nominal and robust objective values as a function of $k$.}
\end{figure}
}

\subsection{Binary fractional optimization instances}
\label{sec:fractionaComputations}

We \rev{now} test the inequalities in a binary fractional problem arising in assortment optimization with cardinality constraint:
\begin{align*}
\text{(FP)} \ \ \max\left \{ \sum_{j=1}^m\frac{\sum_{i=1}^nc_{ij}x_i}{a_{0j}+\sum_{i=1}^na_{ij}x_i} \ : \
 \sum_{i=1}^nx_i\leq k, \
x\in \{0,1\}^n \right \} \cdot
\end{align*}
The data is generate\rev{d} as in the assortment optimization problems considered in \cite{Sen2015}: $a_{ij}\sim U[0,1]$ for all $i,j$, $c_{ij}=a_{ij}r_{ij}$ with $r_{ij}\sim U[1,3]$, $n=200$, $m=20$ and $a_{0j}=a_0$ for all $j=1,\ldots,m$ with $a_0\in \{5,10\}$, and $k\in \{10,20,50\}$. 

Binary fractional problems (FP) are usually solved by linearizing the fractional terms \citep[see][]{Tawarmalani2002,Prokopyev2005,Bront2009,Mendez2014,Sen2015,Borrero2016b}, which requires the addition of $O(nm)$ additional variables and big-M constraints. 
On the other hand, the rotated cone reformulation outlined in Section~\ref{sec:fractional}, requires adding only $m$ additional variables and avoids big-M constraints altogether. 

We test the classical big-M linear formulation used in \cite{Bront2009,Mendez2014} (\texttt{cpx-milo}), the conic formulation without adding inequalities (\texttt{cpx-conic}) and the conic formulation strengthened with \rev{\texttt{polymatroid}} inequalities \footnote{For these instances the \rev{\texttt{strengthened}} inequalities perform very similarly to \rev{\texttt{polymatroid}, since the simpler inequalities already achieve close to 100\% root gap improvements}. Therefore, we only present the results with inequalities \rev{\texttt{polymatroid}}.}. Table~\ref{tab:assortment200} shows the results. Each row represents the average over five instances generated with the same parameters and for each combination of the parameters $a_0$ and $k$ and for each formulation, the root gap (\texttt{rgap}),  the number of nodes explored (\texttt{nodes}), the time elapsed in seconds (\texttt{time}), and the end gap (\texttt{egap})[in brackets, the number of instances solved to optimality (\#)]. The root gap is computed as $\texttt{rgap}=\frac{t_{\text{opt}}-t_{\text{root}}}{\left|t_{\text{opt}}\right|}\times 100$, where $t_{\text{opt}}$ is the objective value of the best feasible solution at termination, and $t_{\text{root}}$ is the objective value of the relaxation obtained after processing the root node (i.e., after user cuts and cuts added by CPLEX). 

{
		\renewcommand\arraystretch{0.7}
		\begin{table}[t!]
			\caption{Experiments with binary fractional optimization.}
			\setlength{\tabcolsep}{2pt}
			\begin{center}
				\label{tab:assortment200}
				\SingleSpacedXI
				\scalebox{0.75}{
					\begin{tabular}{  c c |l  l l l| l l l l| l l l l}
						\hline \hline
						\multirow{2}{*}{\texttt{$a_0$}} &\multirow{2}{*}{\texttt{$k$}} & \multicolumn{4}{c|}{\textbf{\texttt{ \rev{cpx-}milo}}}& \multicolumn{4}{c|}{\textbf{\texttt{ \rev{cpx-conic}}}}&  \multicolumn{4}{c}{\textbf{\texttt{ \rev{polymatroid}}}}\\
						&&\texttt{rgap}&\texttt{nodes}&\texttt{time}&\texttt{egap[\#]}&\texttt{rgap}&\texttt{nodes}&\texttt{time}&\texttt{egap[\#]}&\texttt{rgap}&\texttt{nodes}&\texttt{time}&\texttt{egap[\#]}\\
						\hline
						\multirow{3}{*}{5} & 10  & 50.9 & 20,737 & 7,200 & 43.6[0] & 3.1 & 24,073 & 572 & 0.0[5] & 0.1 & 46 & 19 & 0.0[5]\\
						& 20 & 18.0 & 51,180 & 7,200 & 17.0[0] & 2.7 & 123,655 & 7,200 & 1.9[0] & 0.0 & 118 & 54 & 0.0[5] \\
						& 50 & 0.9 & 621,742 & 6,010 & 0.5[1] & 4.9 & 55,155 & 7,200 & 4.5[0]& 0.1 & 15,465 & 263 & 0.0[5] \\
						\multicolumn{2}{c|}{\textbf{Average}}&\textbf{23.3} &\textbf{ 231,220} & \textbf{6,803} & \textbf{20.4[1]} & \textbf{3.2} & \textbf{67,628} & \textbf{4,991} & \textbf{2.1[5]}& \textbf{0.1} & \textbf{5,210} & \textbf{112} & \textbf{0.0[15]} \\
						\hline
						&&&&&&&&&&&&&\\
						\multirow{3}{*}{10} & 10  & 46.8 & 380,700 & 7,200 & 15.9[0]  & 2.2 & 48,541 & 972 & 0.0[5] & 0.0 & 6 & 14 & 0.0[5]  \\
						& 20 & 39.8 & 23,770 & 7,200 & 37.4[0] & 3.7 & 206,603 & 7,200 & 1.4[0]& 0.0 & 61 & 37 & 0.0[5] \\
						& 50 & 5.6 & 136,382 & 7,200 & 5.2[0]& 5.1 & 52,700 & 7,200 & 4.6[0]  & 0.1 & 36,959 & 396 & 0.0[5]\\
						\multicolumn{2}{c|}{\textbf{Average}}&\textbf{30.7} &\textbf{ 180,284} & \textbf{7,200} & \textbf{19.5[0]} & \textbf{4.3} & \textbf{102,615} & \textbf{5,124} & \textbf{2.0[5]}& \textbf{0.0} & \textbf{12,342} & \textbf{149} & \textbf{0.0[15]}\\
						
						\hline\hline
					\end{tabular}
 				}
			\end{center}
		\end{table}
}

We see that the conic formulation with polymatroid inequalities results in substantially faster solution times than the other formulations. In particular, CPLEX with the classical big-M linear optimization formulation \texttt{\rev{cpx-}milo} can only solve 1/30 instances after two hours of branch and bound, and the average end gaps are 20\%; the conic formulation with extended polymatroid cuts is able to solve all instances to optimality in less than 3 minutes (on average). We see that root gaps for \texttt{\rev{polymatroid}} are very small in all instances (less than 0.1\%), and optimality can be proven in instances with small cardinality parameter $k$ after few branch-and-bound nodes (e.g., in instances with $k=10$ and $a_0=5$ optimality is proven after 46 nodes, while \texttt{\rev{cpx-conic}} requires 24,000 nodes to prove optimality). 

\section{Conclusions}
\label{sec:conclusions}


We propose new convex valid inequalities \rev{that exploit submodularity} for conic quadratic mixed 0-1 sets. 
The studied sets arise in a variety of risk-adverse decision-making problems (e.g, chance constrained optimization with correlated variables, robust optimization with ellipsoidal or discrete uncertainty sets) as well as in models of other problems commonly arising in operations research (e.g., lot sizing, scheduling, assortment, fractional linear optimization). \rev{The unbounded version of the convex inequalities, which arise naturally in most applications, can be efficiently implemented as linear cuts in
an extended space, which make them particularly effective. Moreover, the inequalities can be strengthened to take advantage of other constraints in a problem through approximate lifting without 
affecting this convenient property.
Computational experiments performed on correlated mean-risk minimization, robust interdiction
and assortment optimization problems indicate that the proposed inequalities improve the performance of branch-and-bound solvers substantially; in some cases, problems for which no efficient algorithms were known are now solved in seconds.}

\ignore{
The inequalities are derived by exploiting partial submodularity arising from the binary variables and generalize the polymatroid inequalities known for the binary case. They completely describe the convex hull of a single conic quadratic constraint as well as rotated cone constraint with unbounded continuous variables. We also show how to strengthen the inequalities with additional problem constraints. The experiments on testing the inequalities for different classes of problems indicate that in all cases the inequalities 
}

\section*{Acknowledgment}
A. Atamt{\"u}rk is supported, in part,
by grant FA9550-10-1-0168 from the Office of the Assistant Secretary of Defense for Research and Engineering.
\rev{A. G\'omez is supported, in part, by the National Science Foundation under Grant No. 1818700.} 

\bibliographystyle{apalike}
\bibliography{Bibliography}

\appendix

\section{}
\label{sec:appendix}

\begin{proof}[Proof of Proposition~\ref{prop:convexHullR}]
	Consider the optimization of an arbitrary linear function over the convex relaxation of \rev{the extended formulation of} $U_R$ \rev{given by}:
	\begin{align}
	\ \ \ \ \ \ \ \min\; & a'x +b'y +p w+qz \notag\\
	\text{($P_R$)} \ \ \ \ \ \text{s.t.}\;& {s^2 +\sum_{i\in M}d_i y_i^2 +(w-z)^2}\leq (w+z)^2 \label{eq:rotatedCone}\\
	& (x,s)\in \text{conv}(\rev{U_0})\notag\\
	& y\in \R_+^m, w\geq 0, z\geq 0\notag.
	\end{align}
	
	Without loss of generality, we can assume that $p> 0$ and $q>0$ (if $p<0$ or $q<0$ then the problem is unbounded, and if $p=0$ or $q=0$ then ($P_R$) reduces to a linear program over an integral polyhedron). Moreover, observe that if $w=z$ in an optimal solution, then the problem reduces to a linear optimization over $\rev{\conv(}U\rev)$ which has an optimal integral solution (Proposition~\ref{prop:convexHullU}). Thus, we can assume that $w\neq z$, in which case the left hand size of \eqref{eq:rotatedCone} is differentiable, and we infer from KKT conditions with respect to $w$ and $z$ that 
	\begin{align}
	-p &= -\lambda +\lambda \frac{w-z}{\sqrt{s^2+\sum_{i\in M}d_iy_i^2+(w-z)^2}}\label{eq:dualFeasibilityW}\\
	-q &= -\lambda -\lambda \frac{w-z}{\sqrt{s^2+\sum_{i\in M}d_iy_i^2+(w-z)^2}}\label{eq:dualFeasibilityZ},
	\end{align}
	where $\lambda$ is the dual variable associated with constraint \eqref{eq:rotatedCone}. We deduce from \eqref{eq:dualFeasibilityW} that $w-z=\frac{\lambda-p}{\lambda}\sqrt{s^2+\sum\limits_{i\in M}d_iy_i^2+(w-z)^2}$, and from \eqref{eq:dualFeasibilityZ} that\begin{equation}w-z=\frac{q-\lambda}{\lambda}\sqrt{s^2+\sum\limits_{i\in M}d_iy_i^2+(w-z)^2}\label{eq:difference}.
	\end{equation} In particular, we find that $\lambda=\frac{p+q}{2}.$
	
	Moreover, we obtain from \eqref{eq:difference} that
	\begin{align*}
	(w-z)^2&=\left(\frac{q-\lambda}{\lambda}\right)^2\left(s^2+\sum_{i\in M}d_iy_i^2+(w-z)^2\right)\notag\\
	&=\left(\frac{q-p}{q+p}\right)^2\left(s^2+\sum_{i\in M}d_iy_i^2+(w-z)^2\right).
	\end{align*}
\rev{Letting $\beta=\frac{\left(\frac{q-p}{q+p}\right)^2}{1-\left(\frac{q-p}{q+p}\right)^2}$, we deduce that $$	(w-z)^2=\beta\left(s^2+\sum_{i\in M} d_iy_i^2\right).$$}
Therefore, we have that 
	\begin{equation*}
	\sqrt{s^2+\sum_{i\in M}d_iy_i^2+(w-z)^2}=\sqrt{1+\beta}\sqrt{s^2+\sum_{i\in M}d_iy_i^2}.\label{eq:sqrt}
	\end{equation*}
	Moreover, since in any optimal solution of ($P_R$) constraint \eqref{eq:rotatedCone} is binding, we have
	\begin{equation*}
	w+z=\sqrt{1+\beta}\sqrt{s^2+\sum_{i\in M}d_iy_i^2}.
	\label{eq:sum}
	\end{equation*}
	
	Multiplying equality \eqref{eq:dualFeasibilityW} by $w$ in both sides, and multiplying equality \eqref{eq:dualFeasibilityZ} by $z$ in both sides, we find that
	\begin{align}
	pw+qz&=\lambda(w+z)-\lambda \frac{(w-z)^2}{\sqrt{s^2+\sum_{i\in M}d_iy_i^2+(w-z)^2}}\notag\\
	&=\lambda\sqrt{1+\beta}\sqrt{s^2+\sum_{i\in M}d_iy_i^2}-\lambda \frac{\beta\left(s^2+\sum_{i\in M} d_iy_i^2\right)}{\sqrt{1+\beta}\sqrt{s^2+\sum_{i\in M}d_iy_i^2}}\notag \\
	&=\lambda\frac{s^2+\sum_{i\in M} d_iy_i^2}{\sqrt{1+\beta}\sqrt{s^2+\sum_{i\in M}d_iy_i^2}}\notag\\
	&=\frac{\lambda}{\sqrt{1+\beta}}\sqrt{s^2+\sum_{i\in M}d_iy_i^2}.\label{eq:substitution}
	\end{align}
	
	Therefore, \rev{substituting $pw+qz$ in the objective function of ($P_R$) by \eqref{eq:substitution} and using that $\lambda=\frac{p+q}{2}$,} we see that problem ($P_R$) reduces to 
	\leqnomode
	\begin{align}
	\ \ \ \ \ \ \ \min\; & a'x +b'y +\frac{p+q}{2\sqrt{1+\beta}}\sqrt{s^2+\sum_{i\in M}d_iy_i^2} \label{eq:PR2}\tag{$P_R'$}\\
	\text{s.t.}\;
	& (x,s)\in \text{conv}(U_0), y\in \R_+^m.\notag
	\end{align}
\rev{Moreover, \eqref{eq:PR2} is of the form of ($P_2$) in Proposition~\ref{prop:convexHullU} (after scaling), thus has an optimal integer solution. Therefore, after projecting out the additional variable $s$, we find the desired result.}
\end{proof}

\section{}
\label{sec:computationalBounded}

In this section we test the effectiveness of the \rev{\texttt{unbounded}} polymatroid inequalities \eqref{eq:polymatroidUnbounded} and \rev{\texttt{bounded}} inequalities \eqref{eq:polymatroidBounded} in solving optimization problems \rev{with bounded continuous variables} of the form \begin{equation}
\label{eq:computationalA}
\min \left \{-a'x -b'y +\Omega z: (x,y,z)\in H_\rev\G \right \}.
\end{equation}
For two numbers $\ell<u$,  let $U[\ell,u]$ denote the continuous uniform distribution between $\ell$ and $u$.
The data for the model is generated as follows: $a_i\sim U[0,1]$, $\sqrt{c_i}\sim U[0.85 a_i, 1.15 a_i]$ for $i\in N$, $b_j\sim U[0,1]$, $\sqrt{d_j}\sim U[0.85 b_j, 1.15 b_j]$ for $j\in M$, and $\Omega$ is the solution\footnote{This choice of $\Omega$ ensures that the linear and nonlinear components are well-balanced, resulting in challenging instances with large integrality gap.} of $$-a(N)-b(M)+\Omega\sqrt{c(N)+d(M)}=0. $$
These instances have large integrality gaps with a single conic quadratic constraint.

\rev{The \texttt{unbounded}} inequalities are added as linear cuts in an extended formulation, as described in \rev{Section~\ref{sec:implementationUnbounded}. The \texttt{bounded} inequalities are either added directly as nonlinear inequalities as described in Section~\ref{sec:implementationNonlinearCuts} (\texttt{bounded-nonlinear}), or using outer approximations as described in Section~\ref{sec:implementationGradient} (\texttt{bounded-gradient})}. 
A greedy heuristic is used to choose $T\subseteq M$ for inequalities  \eqref{eq:polymatroidBounded}: \rev{given a fractional point $(\bar x, \bar y, \bar z)$ with} $\bar{y}_{(1)}\geq \bar{y}_{(2)}\geq \ldots \geq \bar{y}_{(m)}$, we check for violation inequalities for each $T_i$ of the form $T_i=\{(1),(2),\cdots,(i)\}$ for $i=0,\ldots,m$. \rev{When using the nonlinear inequalities \texttt{bounded-nonlinear}, we iteratively solve the continuous relaxations and explicitly add the most violated inequality \eqref{eq:polymatroidBounded} found, and the process is repeated until the relative violation of the inequality found is less than $10^{-3}$, i.e., 
$$	
\frac{\sqrt{\left(\sqrt{\sigma+\sum_{i\in T}d_i\bar y_i^2}+\pi'\bar x\right)^2 +\sum_{i\in M\setminus T}d_i\bar y_i^2}}{\bar z}-1\leq 10^{-3}.$$
Observe that this process requires solving many continuous relaxations of \eqref{eq:computationalA} using the barrier algorithm (which is the default algorithm for convex conic quadratic optimization). For \texttt{bounded-gradient}, the inequalities are added at the root node of the branch-and-bound tree using CPLEX callbacks.
}


Table~\ref{tab:pOFFn100} presents the results. Each row represents the average over five instances generated with the same parameters and shows the number of discrete ($n$) and continuous ($m$) variables, the initial gap (\texttt{igap}), the root gap improvement (\texttt{rimp}), the number of nodes explored (\texttt{nodes}), the time elapsed \rev{(including the time used adding the inequalities)} in seconds (\texttt{time}), and the end gap (\texttt{egap})[in brackets, the number of instances solved to optimality (\#)]. The initial gap is computed as $\texttt{igap}=\frac{t_{\text{opt}}-t_{\text{relax}}}{\left|t_{\text{opt}}\right|}\times 100$, where $t_{\text{opt}}$ is the objective value of the best feasible solution at termination and $t_{\text{relax}}$ is the objective value of the continuous relaxation. The end gap is computed as $\texttt{egap}=\frac{{t_\text{opt}}-t_{\text{bb}}}{\left|t_{\text{opt}}\right|}\times 100$, where $t_{\text{bb}}$ is the objective value of the best lower bound at termination.
The root improvement is computed as $\texttt{rimp}=\frac{t_{\text{root}}-t_{\text{relax}}}{t_{\text{opt}}-t_{\text{relax}}}\times 100$, where $t_{\text{root}}$ is the value of the continuous relaxation after adding the valid inequalities to the formulation.

{
\renewcommand\arraystretch{0.8}
\begin{table}[h!]
\caption{Experiments with bounded continuous variables.}
\setlength{\tabcolsep}{2pt}
\begin{center}
\label{tab:pOFFn100}
\SingleSpacedXI
\scalebox{0.6}{
\begin{tabular}{ c c c |c  c c c| c c c c| c c c c | c c c c}
\hline \hline
\multirow{2}{*}{$n$} &\multirow{2}{*}{$m$} & \multirow{2}{*}{\textbf{\texttt{igap}}} & \multicolumn{4}{c|}{\textbf{\texttt{cpx}}} & \multicolumn{4}{c|}{\textbf{\texttt{unbounded}}}& \multicolumn{4}{c|}{\textbf{\texttt{bounded-gradient}}}&\multicolumn{4}{c}{\textbf{\texttt{bounded-nonlinear}}}\\
&&&\texttt{rimp}&\texttt{nodes}&\texttt{time}&\texttt{egap}[\#]&\texttt{rimp}&\texttt{nodes}&\texttt{time}&\texttt{egap}[\#]&\texttt{rimp}&\texttt{nodes}&\texttt{time}&\texttt{egap}[\#]&\texttt{rimp}&\texttt{nodes}&\texttt{time}&\texttt{egap}[\#]\\
\midrule
\multirow{3}{*}{100}&20 & 1,554.7 & 0.0 & 441,520 & 162 & 0.0[5] & 90.0 & 9,617 & 112 & 0.0[5] & 99.7 & 25 & 74 & 0.0[5] & 100.0 & 1 & 45 & 0.0[5]\\
&50 & 724.6 & 0.0 & 2,126,713 & 1,644 & 0.0[5] & 76.0 & 853,671 & 7,200 & 72.5[0] & 99.2 & 1,985 & 4,375 & 0.0[5]& 99.9 & 30 & 219 & 0.0[5]\\
&100 & 267.8 & 0.0 & 8,922,545 & 6,850 & 16.6[1] & 62.1 & 726,361 & 7,200 & 83.3[0] & 81.0 & - & 7,200 & 53.8[0] & 99.9 & 55 & 84 & 0.0[5]\\
\multicolumn{3}{c|}{\textbf{Average}} & \textbf{0.0} & \textbf{3,830,259} & \textbf{2,885} & \textbf{5.6[11]}& \textbf{76.0} & \textbf{529,883} & \textbf{4,804} & \textbf{51.9[5]}& \textbf{93.3} & \textbf{670} & \textbf{3,874} & \textbf{17.9[10]}& \textbf{100.0} & \textbf{29} & \textbf{116} & \textbf{0.0[15]}  \\
\hline
& &&&&&&&&&&&&&&&\\
\multirow{3}{*}{200}&40 & 987.1 & 0.0 & 15,133,028 & 7,200 & 352.7[0] & 89.3 & 127,408 & 7,200 & 72.9[0] & 99.5 & 85 & 6,253 & 3.5[3] & 100.0 & 52 & 475 & 0.0[5]\\
&100 & 396.6 & 0.0 & 11,650,607 & 7,200 & 397.3[0] & 73.9 & 57,742 & 7,200 & 100.7[0] & 79.7 & - & 7,200 & 133.2[0	]& 99.9 & 140 & 395 & 0.0[5]\\
&200 & 217.6 & 0.0 & 4,970,327 & 7,200 & 114.4[0] & 18.3 & 1,647,845 & 7,200 & 690.5[0] & 2.2 & 2,034,862 & 7,200 & 181.6[0] & 99.8 & 183 & 710 & 0.0[5]\\
\multicolumn{3}{c|}{\textbf{Average}} & \textbf{0.0} & \textbf{10,584,654} & \textbf{7,200} & \textbf{205.2[0]}& \textbf{60.5} & \textbf{610,998} & \textbf{7,200} & \textbf{213.1[0]}& \textbf{64.6} & \textbf{581,419} & \textbf{6,845} & \textbf{64.6[3]}& \textbf{99.9} & \textbf{125} & \textbf{527} & \textbf{0.0[15]}  \\
\hline\hline
\end{tabular}
}
\end{center}
\end{table}
} 

We observe in Table~\ref{tab:pOFFn100} that
the use \rev{of the \texttt{unbounded}} inequalities, which do not exploit the upper bounds of the continuous variables, closes 68.2\% of the initial gap on average, but the gap improvement does not necessarily translate to better solution times or end gaps. \rev{The performance of the \texttt{bounded} inequalities, when added as gradients, is adequate when $m$ is small, achieving close to 100\% root gap improvement. However, the performance degrades substantially as $m$ increases; in particular, for $m=100$, the full two hours are spent at the root node adding cuts, and the root improvement of close to 80\% is still far from 99.9\%, achieved by \texttt{bounded-nonlinear}. Moreover, for $n=200$ and $m=200$, both \texttt{unbounded} and \texttt{bounded-gradient} inequalities are ineffective at closing the root gap, with root improvements of 18.3\% and 2.2\%, respectively. In contrast, adding the \texttt{bounded} inequalities as nonlinear inequalities results in all cases in the best performance, with root improvements close to 100\%, significantly \rep{fewer}{less} branch-and-bound nodes explored and better solution times than the other formulations.}

\end{document}